\newcommand{\C}{\mathcal{C}}
\newcommand{\R}{\mathbb{R}}
\newcommand{\Q}{\mathbb Q}
\newcommand{\N}{\mathbb N}
\newcommand{\Z}{\mathbb Z}
\newcommand{\ball}{\mathbb{B}}
\newcommand{\ga}{\Gamma}
\newcommand{\gen}[1]{\langle #1 \rangle}
\DeclareMathOperator{\dist}{d}
\DeclareMathOperator{\diam}{diam}
\theoremstyle{definition}
\newtheorem{thm}{Theorem}[section]
\newtheorem{prop}[thm]{Proposition}
\newtheorem{lem}[thm]{Lemma}
\newtheorem{rem}[thm]{Remark}
\newtheorem{defi}[thm]{Definition}
\newtheorem{cor}[thm]{Corollary}
\newtheorem{ex}[thm]{Example}
\DeclareMathOperator{\axis}{axis}
\newcommand{\cA }{\mathcal A}
\newcommand{\cC }{\mathcal C}
\newcommand{\cH }{\mathcal H}
\newcommand{\cL }{\mathcal L}
\newcommand{\cP }{\mathcal P}
\newcommand{\cS }{\mathcal S}
\newcommand{\vs}{\vspace{0.3cm}}
\newcommand{\vsp}{\vspace{0.2cm}}
\date{}
\author{}
\begin{document}

\title{On the geometry of positive cones in finitely generated groups}
\author{J. Alonso, Y. Antol\'in, J. Brum, C. Rivas }
\maketitle

\abstract{
We study the geometry of positive cones of left-invariant total orders (left-order, for short) in finitely generated groups. 
We introduce the {\em Hucha property} and the {\em Prieto property} for left-orderable groups. 
We say that a group has the Hucha property if in any left-order the corresponding positive cone is not coarsely connected, and the Prieto property if in any left-order the corresponding positive cone is coarsely connected.
We show that all left-orderable free products have the Hucha property, and that the Hucha property is stable under certain free products with amalgamatation over Prieto subgroups. 
As an application we show that non-abelian limit groups in the sense of Z. Sela (e.g. free groups, fundamental group of hyperbolic surfaces, doubles of free groups and others) and non-abelian finitely generated subgroups of free $\mathbb{Q}$-groups in the sense of G. Baumslag have the Hucha property. 
In particular, this implies that these groups have empty BNS-invariant $\Sigma^1$ and that they do not have finitely generated  positive cones.
}

\vsp\vsp

\noindent{\bf MSC 2010 classification}: 20F60, 20E08, 20F67.


\section{Introduction}

Let $G$ be a group.
A subset $P$ of $G$ is {\it a positive cone} of $G$ if it is a subsemigroup which, together with $P^{-1}$ and the identity $\{1\}$, forms a partition of  $G$. 
Elements from $P$ and $P^{-1}$ are called positive and negative respectively, and groups admitting positive cones are called {\em left-orderable} since from every positive cone $P$ a total and left-multiplication-invariant order $\prec$ (a {\em left-order}, for short) can be defined on $G$ by setting $g\prec h$ whenever  $g^{-1}h \in P$. 

Suppose that $G$ is finitely generated, and it is  endowed with a word metric. In this paper, we study the geometry of the positive cones $P$ of $G$, focusing on whether $P$ is (coarsely) connected or not\footnote{Recall that, given a  metric space $(X,\dist)$, a subset $Y\subseteq X$ is coarsely connected if there is $N$ such that $\{x\in X\mid \dist(x,Y)\leq N\}$, the $N$-neighborhood of $Y$, is connected}. As we will see below, this geometric information give us algebraic and formal-language complexity  information about the positive cone $P$. 

Our initial observation is that, although locally the geometry of a positive cone $P$ and the ambient group $G$ might be quite different, when $G$ acts on a (Gromov) hyperbolic space, both the action of $G$ and  of $P$ on the boundary  look the same.  
To be more concrete, let us fix some notation. Suppose that $G$ is acting by isometries on an hyperbolic space $\Gamma$. 
For $H$ a subset of $G$ and $x_0\in \Gamma$, the set $Orb_H(x_0)=\{h x_0\mid h\in H\}$ is called the orbit of $x_0$ under $H$ and we denote by $\Lambda(H)$ the accumulation points of $Orb_H(x_0)$ in $\partial \Gamma$ (it is easy to see that this notion is independent of $x_0$). 
The action is called {\it non-elementary} if $|\Lambda(G)|\geq 3$. 
We will say that the action is {\it of general type} if it is non-elementary and not quasi-parabolic i.e. $G$ does not fix a point of $\Lambda(G)$. (The literature about isometries of Gromov hyperbolic spaces is vast. We recomend \cite{GH} for an introduction.)
\begin{prop}\label{prop: main boundary}
Let $\Gamma$ be a Gromov hyperbolic metric space and suppose that $G\curvearrowright \Gamma$ is an action by isometries of general type.
Let $P$ be a positive cone of $G$. 
Then, the $P$-orbits in $\Gamma$ accumulate on every point of $\Lambda(G)$.
\end{prop}


By a theorem of Osin \cite[Theorem 1]{Osin} every non-elementary acylindrical action on a hyperbolic space by isometries is of general type. In particular the theorem applies to hyperbolic groups acting on their Cayley graph, relatively hyperbolic groups acting on their relative Cayley graph \cite[Proposition 5.2]{Osin}, or mapping class groups of surfaces of sufficiently high complexity acting on their curve complex, to name some classical examples.   It is worth pointing out that Koberda and Kielak had  already observed interactions between  geometry and orderability. On the one hand, Koberda \cite{koberda}  note that the intersection of positive cones containing a given element of an hyperbolic group $G$ yields a unique point in its Gromov boundary, and that the collection of all such points is
a dense subset of $\partial G$. On the other hand, Kielak \cite{kielak} exhibits an argument that recovers Proposition \ref{prop: main boundary} in the case of a group with infinitely many ends acting on its set of ends.

Let us see,  as an example,  how to use Proposition \ref{prop: main boundary} to prove that non-abelian free groups  have no coarsely connected positive cones.  
Suppose that $G=F_n$ is a free group of rank $n\geq 2$ and suppose that $\Gamma$ is the Cayley graph of $G$ with respect to a basis. 
In this case, $\Gamma$ is a tree whose vertices are in correspondence with the elements of $G$ and $G$ acts on $\Gamma$ by left-multiplications. 
Fix a positive cone $P$ with corresponding left-order $\prec$.
Then for every $R>0$, there is $g_R\in G$ which is larger (with respect to $\prec$) than every element from  $\ball(1_G,R)$, the ball of radius $R$ in $G$. By left-invariance, every element of $g_R^{-1}\ball(1,R)=\ball(g_R^{-1},R)$ is smaller than $1_G$ and  we conclude that $\ball(g_R^{-1},R)$  is contained in $P^{-1}$. 
Finally, by Propostion \ref{prop: main boundary}, every component of $\Gamma \setminus\ball(g_R^{-1},R)$ contains at least one positive element, and so we can find two elements of $P$ such that any path connecting them must go through $\ball(g_R^{-1},R)$.  Since $R$ is arbitrary, we conclude that $P$ is not a coarsely connected subset of $\Gamma$.

It is easy to see that (up to adjusting constant) the previous argument is independent of  generating sets. So we have proved the following.
\begin{cor}\label{cor: free groups}
Positive cones of non-abelian free groups  are not coarsely connected.
\end{cor}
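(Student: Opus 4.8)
The plan is to apply Theorem \ref{thm: main boundary} to the left-multiplication action of $G=F_n$ on its Cayley tree $\Gamma$ and to convert the resulting boundary statement into the existence of arbitrarily wide metric ``gaps'' inside $P^{-1}$. First I would verify the hypotheses: since $n\geq 2$, the boundary $\partial\Gamma$ is a Cantor set, the action is non-elementary ($|\Lambda(G)|=|\partial\Gamma|=\infty$) and fixes no end, hence is of general type, so Theorem \ref{thm: main boundary} applies and every $P$-orbit accumulates on all of $\Lambda(G)=\partial\Gamma$. This is exactly where non-abelianity enters: for $F_1=\Z$ the action is on a line and is elementary, consistent with $\Z$ carrying the coarsely connected positive cone $\N$. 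Because coarse connectedness only asserts the existence of a single good neighborhood radius, to prove its negation I would fix an arbitrary $N$ and exhibit a separation of the $N$-neighborhood of $P$.

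Second, I would construct a negative gap. Fix a positive cone $P$ with left-order $\prec$ and fix a radius $R$, to be taken large compared to $N$. Since $\ball(1_G,R)$ is finite and $\prec$ is total, it has a $\prec$-maximum $m$; choosing any $g_R\succ m$ (for instance $g_R=mp$ with $p\in P$) makes $g_R$ dominate the whole ball. Left-translating the inequalities $h\prec g_R$, for $h\in\ball(1_G,R)$, by $g_R^{-1}$ yields $g_R^{-1}h\prec 1_G$, that is $\ball(g_R^{-1},R)=g_R^{-1}\ball(1_G,R)\subseteq P^{-1}$. Thus I obtain a ball of radius $R$ consisting entirely of negative elements and disjoint from $P$; in particular every element of $P$ lies at distance strictly greater than $R$ from $g_R^{-1}$.

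Third, I would exploit the gap. Removing the concentric ball $\ball(g_R^{-1},R-N)$ disconnects the tree into the subtrees hanging off its boundary sphere, and since $P$ accumulates on every point of $\partial\Gamma$ with at least two boundary points separated by this ball, $P$ meets at least two of these components. On the other hand, as all of $\ball(g_R^{-1},R)$ avoids $P$, any vertex within distance $R-N$ of $g_R^{-1}$ is at distance $>N$ from every element of $P$, so the inner ball is disjoint from the $N$-neighborhood of $P$. Hence that neighborhood is contained in $\Gamma\setminus\ball(g_R^{-1},R-N)$, a disconnecting subtree, and meets at least two of the resulting components, so it is disconnected. As $N$ was arbitrary, $P$ is not coarsely connected. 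The main obstacle here is purely the bookkeeping of constants: one must take the outer radius $R$ strictly larger than $N$ so that the gap survives after thickening $P$ by $N$ (using that distances between vertices are integers), and one must confirm that the two parts of $P$ are genuinely separated by the inner ball, which holds because $\partial\Gamma$ has points reached past distinct sphere vertices.

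Finally, to remove the dependence on the chosen basis, I would observe that any two finite generating sets give bi-Lipschitz equivalent word metrics on $G$, under which coarse connectedness (equivalently, its Rips-chain formulation in a geodesic space) is preserved up to enlarging $N$; the conclusion therefore holds for every word metric on $G$.
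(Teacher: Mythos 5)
Your proposal is correct and is essentially the paper's own argument: apply Theorem \ref{thm: main boundary} to the action on the Cayley tree, translate the ball $\ball(1_G,R)$ by the inverse of a $\prec$-dominating element to get $\ball(g_R^{-1},R)\subseteq P^{-1}$, and use accumulation of $P$ on all of $\partial\Gamma$ to find positive elements separated by that ball. The only difference is that you spell out the constant bookkeeping (inner ball of radius $R-N$, strict domination $g_R\succ m$, bi-Lipschitz independence of the generating set) that the paper compresses into ``up to adjusting constants.''
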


 Corollary \ref{cor: free groups} easily follows from Kielak's work \cite{kielak} and also from the analisys of the BNS invariant of the free group (see definition below). Yet, the  previous proof exemplifies  how we will exploit the interaction of the geometry at infinity of $P$, with the local geometry of $G$.  We use a similar but more involved argument in Theorem \ref{thm: surface group}, where we show that fundamental groups of hyperbolic surfaces do not have coarsely connected positive cones. 
In this case, $G$ acts properly and co-compactly on its Cayley graph, a space which is quasi-isometric to the hyperbolic plane $\mathbb{H}^{2}$. 
Finding sets of negative elements separating the boundary of $\mathbb{H}^2$ (we call such sets  {\it negative swamps}) becomes more delicate, and we heavily relies on the ``planarity" of $\mathbb{H}^2$.

The situation for hyperbolic groups in general is much more complex, as there exists examples of groups with connected positive cones. 
To see this, recall that given a finitely generated group $G$, a non-trivial homomorphism $\phi\colon G\to \R$ belongs to $\Sigma^1(G)$, the Bieri-Neumann-Strebel invariant (BNS invariant for short), if and only if  $\phi^{-1}((0,\infty))$ is connected. Moreover, the kernel of $\phi$ is finitely generated if and only if both $\phi$ and $-\phi $ belong to $\Sigma^1(G)$ (see \cite{BNS}).  
 In this light, it is easy to come up with examples of hyperbolic groups with non-trivial BNS invariant, for instance by considering (finitely generated free)-by-$\Z$ groups that are hyperbolic (see \cite{Brink} for a characterization) or, up to passing to finite index, any fundamental group of a closed hyperbolic 3-manifold (see \cite{Agol}). 
 Clearly, any (left-orderable)-by-$\Z$ group is left-orderable (since left-orderability is stable under extensions, see for instance \cite{GOD}), and we can show that all the previous examples support connected positive cones. Indeed, in Section \ref{sec: basics} we provide an easy argument showing the following.

\begin{prop}\label{prop:BNS}
Every finitely generated left-orderable group $G$ with non-empty $\Sigma^1(G)$ admits connected positive cones.
In particular, there are left-orderable hyperbolic groups with connected positive cones.
\end{prop}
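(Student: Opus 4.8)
The plan is to build a positive cone directly out of a homomorphism $\phi\colon G\to\R$ representing a point of $\Sigma^1(G)$, by a lexicographic extension, and then to observe that the resulting cone coarsely coincides with the half-space $\phi^{-1}((0,\infty))$, whose connectedness is exactly what membership in $\Sigma^1(G)$ provides. First I would fix a non-trivial $\phi$ with $\phi^{-1}((0,\infty))$ coarsely connected and set $K=\ker\phi$. Since $G$ is left-orderable, so is its subgroup $K$; choose a positive cone $P_K$ of $K$ and define
\[
P \;=\; \phi^{-1}\big((0,\infty)\big)\;\cup\; P_K .
\]

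Next I would check that $P$ is a positive cone of $G$, which is routine from $\phi(gh)=\phi(g)+\phi(h)$. The subsemigroup property follows by splitting into cases according to the signs of $\phi(g)$ and $\phi(h)$: if either image is positive then $\phi(gh)>0$, and if both vanish then $g,h\in P_K$, so $gh\in P_K$ because $P_K$ is a subsemigroup of $K$. The partition $G=P\sqcup P^{-1}\sqcup\{1\}$ reduces, on $\phi^{-1}(0)=K$, to the partition coming from $P_K$, and on $G\setminus K$ to the trichotomy of the sign of $\phi$. Note that adjoining $P_K$ is genuinely necessary: the half-space $\phi^{-1}((0,\infty))$ alone omits every non-trivial element of $K$ and so is not a positive cone.

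The geometric heart is the coarse connectedness of $P$. Here the key point is that, since $\phi\neq 0$, the finite generating set contains a letter $s$ with $\phi(s)\neq 0$, and for every $g\in P_K$ one of the neighbours $gs,gs^{-1}$ has positive $\phi$-value; hence $P$ lies in the $1$-neighbourhood of $A:=\phi^{-1}((0,\infty))\subseteq P$, so $P$ and $A$ share the same coarse geometry. Writing $\mathcal N_r(Y)=\{x:\dist(x,Y)\le r\}$, a short argument then shows that if $\mathcal N_{N_0}(A)$ is connected (which holds for some $N_0$ as $\phi\in\Sigma^1(G)$) then $\mathcal N_N(P)$ is connected for every $N\ge N_0$: any vertex $x$ of $\mathcal N_N(P)$ is joined inside $\mathcal N_N(P)$ by a geodesic to a point $p\in P$ (geodesic points are no farther from $p\in P$ than $x$ is), then by at most one edge from $p$ into $A$, and all of $A$ sits inside the connected set $\mathcal N_{N_0}(A)\subseteq\mathcal N_N(P)$, so every vertex lies in one component. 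Thus $P$ is coarsely connected.

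Finally, for the ``in particular'' clause I would exhibit a concrete left-orderable hyperbolic group with non-empty BNS invariant: any hyperbolic (finitely generated free)-by-$\Z$ group $F\rtimes\Z$ (as in \cite{Brink}) works. It is left-orderable since left-orderability is stable under extensions and both $F$ and $\Z$ are left-orderable, and the projection $F\rtimes\Z\to\Z$ has finitely generated kernel $F$, so both it and its negative lie in $\Sigma^1$; the first part then supplies a connected positive cone. I expect no serious obstacle: the only delicate point is checking that enlarging $A$ to a genuine positive cone by adjoining $P_K$ preserves coarse connectedness, and this is precisely what the distance-$1$ observation above settles.
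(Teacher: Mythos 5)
Your proposal is correct and follows essentially the same route as the paper: you build the identical lexicographic cone $P=\phi^{-1}\bigl((0,\infty)\bigr)\cup P_K$ and obtain connectivity by pushing the $\phi=0$ part of $P$ one generator-step into the open half-space (the paper right-multiplies positive elements by a generator $x_0$ with $\phi(x_0)>0$ and uses connectedness of $\{g\mid \phi(x_0)<\phi(g)\}$, while you use the $1$-neighbourhood of $\phi^{-1}\bigl((0,\infty)\bigr)$). One small sharpening: membership in $\Sigma^1(G)$ gives genuine, not just coarse, connectedness of $\phi^{-1}\bigl((0,\infty)\bigr)$, so your distance-one observation already shows that the subgraph spanned by $P$ is connected, yielding the conclusion exactly as stated rather than only coarse connectedness.
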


This may seem counter-intuitive in view of Proposition \ref{prop: main boundary}, yet, we will show that when positive cones of hyperbolic groups are coarsely connected, they have to be very distorted.
More precisely, in  Theorem \ref{thm combing},  we show that if a positive cone in a non-elementary hyperbolic group is coarsely connected, then for every $\lambda \geq 1$, $c,$ and $r\geq 0$ there are pairs of positive elements that {\em cannot} be joined by a $(\lambda, c)$-quasi-geodesic supported on the $r$-neighourhood of  the positive cone.

\begin{rem} \label{rem: tilde T}It is tempting to believe that the converse of Proposition \ref{prop:BNS} holds in general, namely that having a coarsely connected positive cone implies that the BNS invariant is not trivial. This is, however, not true in general. For instance there are perfect groups (i.e. groups with trivial abelianization) which supports only coarsely connected positive cones. A concrete example is $\widetilde{T}=\langle a,b,c\mid a^2=b^3=c^7=abc\rangle$, which is easily seen to be a perfect and left-orderable group such that all its positive cones are connected (see Corollary \ref{cor: Braids are Prieto}).
\end{rem}

Coming back to applications of Proposition \ref{prop: main boundary}, we can show that many HNN extensions and amalgamated free products do not admit coarsely connected positive cones by exploiting their action on their associated Bass-Serre trees. 
Curiously, to show that certain groups acting on trees do not admit coarsely connected positive cones, we will need the opposite property for edge stabilizers, namely that for (certain) edge stabilizers all their positive cones are coarsely connected.

Before stating our main result, we need to set some terminology.
\begin{defi}
Let $G$ be a finitely generated left-orderable group endowed with a word metric with respect to a finite generating set and let $H$ be a subgroup of $G$. 
\begin{enumerate}
\item We say that $G$ is {\it Prieto} if all its positive cones are coarsely connected.
\item We say that $G$ is {\it Hucha with respect to $H$} if for all $r>0$ and all positive cones $P$ of $G$, the $r$-neighbourhood of $P$ is not connected  and there are cosets $g_1H$ and $g_2H$ that cannot be connected by a path inside the $r$-neighbourhood of $P$.

\end{enumerate}
We will say that a group is Hucha if it is Hucha relative to the trivial subgroup.\footnote{About the terminology. Prieto is an Spanish word which could be translate as tight. Whereas hucha refers to an old leather gadget who has an elongated slot which opens under a small pressure in order to insert and keep the coins.    }
\end{defi}

Prototypical examples of Prieto groups are finitely-generated torsion-free abelian groups.  It is well known that if one view $\Z^n$ as the standard lattice in $\mathbb{R}^n$, positive cones essentially correspond to half-spaces defined by hyperplanes in $\R^n$ going through the origin (see Section \ref{sec: Prieto}).  
Less obvious examples are braid groups and the group $\widetilde{T}$ from Remark \ref{rem: tilde T}, among others (see Corollary \ref{cor: Braids are Prieto}).  
On the other hand,  free groups are {\it Hucha groups} with respect to all its infinite index finitely generated subgroups  (see Proposition \ref{prop:F2hucha}).

As mentioned before, there is  a beautiful interplay between the opposite properties Prieto and Hucha, that allows to construct new Hucha groups.  For instance we can show the following.

\begin{thm}\label{thm: main combination}
Suppose that $A$ has the Hucha property with respect  to a subgroup $C$, and suppose that $C$ is Prieto. 
Let $G$ be either an HNN extension $A*_C t$ of an amalgamated free product of $A*_C B$, where $B$ is any finitely generated  group.
If $G$ is left-orderable, then $G$ is Hucha.
\end{thm}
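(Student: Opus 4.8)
The plan is to exploit the action of the HNN extension (and similarly the amalgamated product) on its Bass-Serre tree $T$, and to combine the local-at-infinity behaviour guaranteed by Theorem~\ref{thm: main boundary} with the rigidity provided by the Prieto hypothesis on the edge group $C$. Concretely, let $G=A*_C t$ (or $G=A*_C B$) with a chosen left-order $\prec$ and positive cone $P$. The group $G$ acts on its Bass-Serre tree $T$ by isometries; this action is non-elementary and of general type whenever the splitting is nontrivial and $G$ is not virtually cyclic, so Theorem~\ref{thm: main boundary} applies and tells us that the $P$-orbit of a basepoint accumulates on every point of $\partial T$. The strategy is to mimic the free-group argument from Corollary~\ref{cor: free groups}: find, for each radius $r$, a ``negative swamp'' that separates two points of $\Lambda(G)=\partial T$, thereby witnessing that the $r$-neighbourhood of $P$ is disconnected, while simultaneously arranging that the swamp separates two specified cosets of the trivial subgroup (since we want Hucha, not just Hucha relative to $H$).

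\medskip

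\noindent First I would reduce the disconnection statement to a statement about a single vertex group. Fix a large radius $r$. Using left-invariance of $\prec$ exactly as in the free-group example, I would translate by a suitable $g_r\in G$ so that a large ball $\ball(g_r^{-1},R)$ lands inside the negative cone $P^{-1}$; here $R$ is chosen large compared to $r$. The key point is to choose $g_r$ so that this negative ball is concentrated around a single vertex $v$ of $T$ whose stabiliser is a conjugate of the vertex group $A$. Inside that vertex group, the Hucha-with-respect-to-$C$ hypothesis on $A$ provides, for any radius, two cosets of $C$ that cannot be connected inside the $r$-neighbourhood of $P\cap A$ (after translating the order back to $A$). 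The edges of $T$ incident to $v$ are indexed precisely by cosets of $C$ in $A$ (and, in the amalgamated case, also cosets of $C$ in $B$ on the other side), so the Hucha-relative-to-$C$ separation inside $A$ translates into a separation of the corresponding edges, hence of the corresponding subtrees of $T$, by a negative region.

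\medskip

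\noindent The role of the Prieto hypothesis on $C$ is to guarantee that the separation cannot be ``cheaply repaired'' by routing a connecting path through the edge group. Since $C$ is Prieto, the positive cone $P\cap C$ induced on $C$ is coarsely connected, so within each edge group the positive and negative parts are not separated at bounded scale; this is what forces any potential connecting path in the $r$-neighbourhood of $P$ that crosses from one side of the swamp to the other to actually pass through the vertex group $A$, where the Hucha-relative-to-$C$ property of $A$ blocks it. In other words, Prieto-ness of $C$ ensures that the only way across is through $A$, and Hucha-relative-to-$C$ of $A$ closes that door. I would make this precise by a ``crossing'' argument: any edge of $T$ traversed by a path must be crossed within a bounded neighbourhood, and coarse connectedness of $P\cap C$ lets us assume the crossing happens inside a single translate of $A$.

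\medskip

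\noindent The main obstacle I anticipate is the bookkeeping needed to pass between the three metrics in play: the word metric on $G$, the word metric on the vertex group $A$, and the tree metric on $T$. The subgroups $A$, $C$ are only undistorted up to the quasi-isometry constants coming from the Bass-Serre structure, so I must check that the $r$-neighbourhood of $P$ in $G$, when intersected with a translate of $A$, sits inside a controlled neighbourhood of $P\cap A$ (and conversely), so that the Hucha and Prieto data for the subgroups can be transferred to $G$. A second, more delicate point is verifying that the negative swamp really separates two cosets of the \emph{trivial} subgroup (i.e.\ two actual group elements), not merely two cosets of $C$; this is where I would use Theorem~\ref{thm: main boundary} to locate positive elements on opposite sides of the swamp deep inside two distinct subtrees, and then argue that the swamp disconnects their $r$-neighbourhoods. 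Finally, one must confirm the hypotheses of Theorem~\ref{thm: main boundary}: that the $G$-action on $T$ is of general type, which holds provided the HNN or amalgam splitting is not degenerate, a case that can be excluded (or handled separately) under the standing assumption that $G$ is left-orderable and genuinely uses the splitting.
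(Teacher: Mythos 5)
Your plan is correct and follows the paper's proof of Theorem \ref{thm: main combination} (via Theorem \ref{thm:induction}) essentially step for step: the Bass-Serre action is of general type (degenerate splittings are automatically excluded, since no group can be both Hucha and Prieto), Proposition \ref{prop: P accumulates in ends of T} places positive elements in every subtree, the swamp is an $r$-thickened negative cone of a conjugated order anchored at a vertex, the Hucha-relative-to-$C$ property of the vertex group separates two $C$-cosets and hence two edges/subtrees, and the Prieto property of $C$ plays exactly the role your final ``crossing'' remark assigns it -- any excursion of a path out of the vertex group returns through the same edge-group coset and can be replaced by a coarse positive path inside $C$, so a positive connection in $G$ would project to a positive connection in $A$ between the two $C$-cosets, a contradiction (the paper's Lemma \ref{lem:projection}). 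The one caveat is that what you file under ``bookkeeping'' is where the paper spends its main technical effort: Lemma \ref{lem:adapted} constructs a special (adapted) generating set with the $v$-reduction property, without which your claim that every crossing is checked through an edge-group coset within bounded distance is false for an arbitrary generating set, and the projection itself needs the defect-reduction induction of Lemma \ref{lem:projection}; but the skeleton you describe is the paper's argument.
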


Theorem \ref{thm: main combination} is really a corollary of our main technical result, Theorem \ref{thm:induction}.
There, we conclude that HNN extensions and amalgameted free product are Hucha with respect to a  family of non-trivial subgroups. 
Starting from free groups, we use  Theorem \ref{thm:induction} to inductively build an infinite family of Hucha groups that we denote by $\mathfrak{H}$.

\begin{defi}\label{def: familia H}
Let $\mathfrak{H}$ be the smallest family of groups containing  non-abelian finitely generated free groups,  that is closed under taking free products and taking non-abelian finitely generated subgroups, and such that for every $G\in \mathfrak{H}$ and any cyclic centralizer subgroup $C$ of $G$, the group $G*_C A$, where $A$ is finitely generated torsion-free abelian, lies in $\mathfrak{H}$.
\end{defi}
It easily follows from a theorem of Howie \cite{Howie} (see Proposition \ref{prop:howie}) that every group in $\mathfrak{H}$ is locally indicable (i.e. every non-trivial finitely generated subgroups maps onto $\Z$), and hence left-orderable \cite{burnshale}. 
Further, there are two important families of groups closely related to our family $\mathfrak H$. 
One is the family of   {\em limit groups} introduced by Sela (see \cite{ChampGuir} for a nice survey). 
Indeed, it follows from the hierarchical characterization given by  Kharlampovich and Mysniakov \cite{KM98} that  {\em non-abelian limit groups} belong to $\mathfrak H$.  The second, is the family of {\em free $\Q$-groups} introduced by G. Baumslag \cite{GBaumslag}.
 It turns out that free $\Q$-groups also admit a hierarchical construction starting from free groups, from which we can   deduce that {\em finitely generated subgroups of free $\Q$-groups} are also contained in $\mathfrak H$. See Section \ref{sec: limit} for details.


The main theorem of this paper is the following.
\begin{thm}\label{thm: main}
Any group in $\mathfrak{H}$ is Hucha. In particular, non-abelian limit groups and finitely generated subgroups of free $\Q$-groups are Hucha.
\end{thm}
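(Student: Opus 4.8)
The plan is to prove the following stronger statement by induction on the construction of $\mathfrak{H}$: every $G\in\mathfrak{H}$ is Hucha with respect to each of its cyclic centralizer subgroups (taking the centralizer to be trivial recovers the assertion that $G$ is Hucha). Strengthening the hypothesis in this way is essential, because the combination step amalgamates over a cyclic centralizer $C$, and in order to feed $G*_C A$ into Theorem \ref{thm:induction} one must already know that $G$ is Hucha relative to $C$ and that $C$ is Prieto. The family $\mathfrak{H}$ is generated from non-abelian finitely generated free groups by three operations, so I would check that the strengthened property is preserved by each of them.

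For the base case, let $G$ be a non-abelian finitely generated free group. Every centralizer of a non-trivial element of $G$ is infinite cyclic, hence a finitely generated subgroup of infinite index, and by Proposition \ref{prop:F2hucha} the group $G$ is Hucha with respect to all such subgroups (and with respect to the trivial subgroup). For the amalgamation step, suppose $G\in\mathfrak{H}$ satisfies the inductive hypothesis, let $C$ be a cyclic centralizer of $G$, and let $A$ be finitely generated torsion-free abelian. Then $C$, being cyclic, is finitely generated torsion-free abelian and therefore Prieto (as recalled in the introduction), while $G$ is Hucha with respect to $C$ by hypothesis. Theorem \ref{thm:induction} then yields that $G*_C A$ is Hucha, and moreover Hucha with respect to a family of non-trivial subgroups; the point to verify is that this family can be arranged to contain (conjugates of) all cyclic centralizers of $G*_C A$, so that the strengthened hypothesis propagates. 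The free product step for $G_1*G_2$ with $G_1,G_2\in\mathfrak{H}$ is the case $C=\{1\}$, taking $A=G_1$ (which is Hucha, i.e.\ Hucha with respect to $\{1\}$) and viewing $G_1*G_2=G_1*_{\{1\}}G_2$; the trivial group is vacuously Prieto, so the same appeal to Theorem \ref{thm:induction} applies.

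The main obstacle is the closure under non-abelian finitely generated subgroups, and here I would argue through the action on Bass--Serre trees. If $K\leq G$ is a non-abelian finitely generated subgroup and $G$ is presented by its outermost splitting (an amalgam over a cyclic centralizer, or a free product), then $K$ acts on the associated tree $T$. If $K$ fixes a vertex, it is contained in a conjugate of a vertex group, which lies lower in the construction, and we conclude by induction. Otherwise $K$ acts on the hyperbolic space $T$ without fixing an end, and, being non-abelian, its action is of general type; Theorem \ref{thm: main boundary} then shows that for every positive cone $P$ of $K$ the $P$-orbits accumulate on all of $\Lambda(K)\subseteq\partial T$. Running the negative-swamp argument used for free and surface groups, whereby a $\prec$-large element produces a ball contained in $P^{-1}$ that separates two boundary points and hence two positive elements, shows that $P$ is not coarsely connected and separates the required cosets. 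The delicate part is verifying that one always falls into one of these two cases for the relevant subgroups, that the general-type alternative is the only non-elliptic possibility for the groups at hand, and that the separated cosets can be chosen to be cosets of a prescribed cyclic centralizer of $K$.

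Finally, for the ``in particular'' clause it remains to place the named groups inside $\mathfrak{H}$. By the hierarchical characterization of limit groups \cite{KM98}, every non-abelian limit group is obtained from free groups by free products and amalgamations over cyclic centralizers with abelian groups, which exhibits it as a member of $\mathfrak{H}$; similarly, finitely generated subgroups of free $\Q$-groups are shown in Section 6 to arise through such a hierarchy. Once membership in $\mathfrak{H}$ is established, the first part of the theorem applies verbatim. Throughout, local indicability (Proposition \ref{prop:howie}) guarantees that all the groups involved are left-orderable, so that positive cones exist and every statement is non-vacuous.
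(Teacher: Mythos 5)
Your skeleton coincides with the paper's: a strengthened induction showing every group in $\mathfrak{H}$ is Hucha with respect to its cyclic centralizers, base case via Proposition \ref{prop:F2hucha}, and the amalgamation step via Theorem \ref{thm:induction} with $C$ cyclic hence Prieto. The genuine gap is in the step you yourself flag as the main obstacle, closure under non-abelian finitely generated subgroups, and it is not a verification detail but the heart of the proof. For $K\leq L*_C A$ not fixing a vertex, you propose to run the ``negative swamp'' argument of the free and surface group cases using Theorem \ref{thm: main boundary}. That argument needs the ambient Cayley graph itself to be disconnected by a ball (the Cayley graph of a free group \emph{is} the tree; for surface groups the proof leaned heavily on planarity of $\mathbb{H}^2$). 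Here the action of $K$ on the Bass--Serre tree is not proper: vertex stabilizers are typically infinite, so a ball of $K$ contained in $P^{-1}$ separates nothing --- positive paths in $\Gamma(K,X)$ can bypass the swamp by travelling inside a single coset of an infinite vertex stabilizer, which projects to a single vertex of $T$. Converting separation in $T$ into separation in the Cayley graph is precisely what the adapted generating sets of Lemma \ref{lem:adapted} (edge-stabilizer cosets as checkpoints) and Lemma \ref{lem:projection} (using the \emph{Prieto} property of edge groups to push a positive path into a vertex group or force it into the swamp) achieve; in other words, the subgroup step must again go through Theorem \ref{thm:induction} applied to the minimal $K$-invariant subtree $T'$, not through the boundary accumulation statement.

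Making that inner application work requires the case analysis your sketch elides. First, ``non-abelian and no fixed vertex implies general type'' is not automatic: the paper gets it from CSA (Lemma \ref{lem: CSA}), hence malnormality of $C$ in $L$, hence $2$-acylindricity of $T$ (Lemma \ref{lem: acylindrical}), which forces any subgroup whose minimal tree is a line to be cyclic and rules out fixed ends for $K$. Second, the vertex stabilizers of $T'$ must be shown finitely generated (Lemma \ref{lem:fg-vertex groups}) before the induction applies to them. Third, Theorem \ref{thm:induction} needs \emph{some} vertex stabilizer of $T'$ to be Hucha relative to an edge group; the problematic case where all vertex stabilizers are abelian (hence non-Hucha) is excluded in the paper by a separate collapsing argument --- abelian $G_u$ with centralizer edge group forces $G_u=G_e$, and collapsing all such edges would make $K$ abelian, a contradiction --- while the case of a trivial edge stabilizer is handled by Proposition \ref{prop: free product Hucha}. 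Without these three ingredients your induction does not close; with them, your proposal becomes exactly the paper's proof.
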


In light of Proposition \ref{prop:BNS}, we obtain the following immediate consequence of Theorem \ref{thm: main}.

\begin{cor} The BNS-invariant $\Sigma^1$ of every group in $\mathfrak{H}$ is trivial. In particular, the BNS-invariants $\Sigma^1$ of non-abelian limit groups and finitely generated subgroups of free $\Q$-groups are trivial.
\end{cor}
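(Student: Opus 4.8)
The plan is to derive the corollary directly from Theorem \ref{thm: main} and the contrapositive of Proposition \ref{prop:BNS}. First I would check that Proposition \ref{prop:BNS} applies to every $G\in\mathfrak{H}$: such a $G$ is finitely generated, since finite generation is preserved throughout the hierarchical construction of $\mathfrak{H}$ (free products of finitely generated groups, finitely generated subgroups, and amalgams $G*_C A$ over a cyclic centralizer subgroup $C$ with $A$ finitely generated torsion-free abelian), and it is left-orderable, since it is locally indicable by Proposition \ref{prop:howie} and hence left-orderable by \cite{burnshale}. Thus both hypotheses of Proposition \ref{prop:BNS} are met.

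The core of the argument is the chain of implications linking the three flavours of connectedness in play. By Theorem \ref{thm: main}, every $G\in\mathfrak{H}$ is Hucha, which by definition means that for every positive cone $P$ and every $r>0$ the $r$-neighbourhood of $P$ is disconnected. In particular no positive cone of $G$ is coarsely connected, and since a connected set is trivially coarsely connected (take the $0$-neighbourhood), no positive cone of $G$ is connected. Hence $G$ admits no connected positive cone. Applying the contrapositive of Proposition \ref{prop:BNS}, a finitely generated left-orderable group with no connected positive cone must satisfy $\Sigma^1(G)=\emptyset$; this establishes the first assertion.

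For the ``in particular'' clause I would only need to recall the membership of the two named families in $\mathfrak{H}$: non-abelian limit groups lie in $\mathfrak{H}$ by the hierarchical characterization of Kharlampovich--Myasnikov \cite{KM98}, and finitely generated subgroups of free $\Q$-groups lie in $\mathfrak{H}$ by the hierarchical construction recorded in Section 6. The general statement then applies to these groups verbatim.

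There is no substantive obstacle here, as all the weight rests on Theorem \ref{thm: main}, which we are entitled to assume; the only point deserving care is the logical direction, namely that one invokes the \emph{contrapositive} of Proposition \ref{prop:BNS} (deducing emptiness of $\Sigma^1$ from the absence of connected positive cones) rather than the proposition as stated, together with the elementary observation that the Hucha property rules out not merely connected but even coarsely connected positive cones.
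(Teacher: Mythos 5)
Your proposal is correct and matches the paper's argument, which derives the corollary immediately from Theorem \ref{thm: main} together with (the contrapositive of) Proposition \ref{prop:BNS}: Hucha groups have no coarsely connected, hence no connected, positive cones, so $\Sigma^1$ must be empty. Your extra checks (finite generation along the hierarchy, left-orderability via Proposition \ref{prop:howie}, and membership of the two named families in $\mathfrak{H}$) are exactly the points the paper leaves implicit.
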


 We point out that for the case of non-abelian limit groups this was already known by Kochloukova, who proved it with homological tools \cite[Lemma E]{Kouch}.
 Also, the emptyness of $\Sigma^1$ for non-abelian limit groups can also be derived from a results  of $\ell^2$-Betti numbers. Hillman \cite[Theorem 6]{Hillman} proved that a necessary condition for a finitely presented group to have a non-empty $\Sigma^1$ is to have vanishing  first $\ell^2$-Betti number equal to zero, however Pichot \cite[Theorem 1]{Pichot} proved that these is not the case for non-abelian limit groups.

Another immediate consequence of the lack of coarse connectivity of a positive cone is that it can not be finitely generated as a semi-group (indeed see Remark \ref{rem: fg pos cone are coarsely connected}). 
The search for groups supporting or not supporting finitely generated positive cones is an active line of research (see \cite{GOD} and reference therein for some partial account). 
For instance, using dynamical techniques it was shown that free-products of groups \cite{rivas free} and fundamental groups of closed hyperbolic surfaces \cite{ABR} do not support finitely generated positive cones (in fact, this method proves something stronger:  it rules out the existence of {\em isolated } left-orders). This dynamical approach, roughly, consists of perturbing the dynamics of the generator of a group acting on the line while preserving its relations,
a task that became impractical when dealing with complicated presentations (which is the case, for instance, of limit groups that are high in the hierarchy). 
In contrast, our method immediately yields the following.


\begin{cor} 
No positive cone of a group in $\mathfrak{H}$ is finitely generated as a semi-group. In particular, no positive cone in a non-abelian limit group or in a finitely generated subgroup of free $\Q$-group is finitely generated as a semi-group.
\end{cor}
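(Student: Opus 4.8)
The plan is to deduce this directly from the main theorem together with the elementary observation recorded in Remark~\ref{rem: fg pos cone are coarsely connected}: a positive cone that is finitely generated as a semigroup is automatically coarsely connected. Granting that observation, the argument is a one-line contrapositive. By Theorem~\ref{thm: main} every group $G$ in $\mathfrak{H}$ is Hucha, which by definition means that for every positive cone $P$ of $G$ and every $r>0$ the $r$-neighbourhood of $P$ fails to be connected; equivalently, $P$ is not coarsely connected. Since a finitely generated positive cone would be coarsely connected, no positive cone of $G$ can be finitely generated as a semigroup. The ``in particular'' clause then follows because non-abelian limit groups and finitely generated subgroups of free $\Q$-groups belong to $\mathfrak{H}$, as recorded in the discussion of Definition~\ref{def: familia H}.

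The only step carrying genuine content is the implication ``finitely generated $\Rightarrow$ coarsely connected'', so I would spell this out. Suppose $P=\langle s_1,\dots,s_k\rangle^{+}$ is generated as a semigroup by finitely many elements, and set $M=\max_i |s_i|$ in the chosen word length. Given any $p\in P$, write $p=s_{i_1}s_{i_2}\cdots s_{i_m}$ and form the partial products $p_j=s_{i_1}\cdots s_{i_j}\in P$. Each consecutive pair satisfies $\dist(p_j,p_{j+1})=|s_{i_{j+1}}|\le M$, so a geodesic joining them lies in the $M$-neighbourhood of $P$. Concatenating these geodesics exhibits a path inside the $M$-neighbourhood of $P$ running from $p$ back to the generator $s_{i_1}$. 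As the finitely many generators $s_1,\dots,s_k$ lie in a bounded region, this shows the $M$-neighbourhood of $P$ is connected, i.e. $P$ is coarsely connected with constant $N=M$. (One should also check the argument is independent of the generating set up to adjusting $N$, exactly as remarked after Corollary~\ref{cor: free groups}.)

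Since all the real work has already gone into Theorem~\ref{thm: main}, there is no substantial obstacle here: the corollary is a short deduction, and the only place where a little care is needed is in translating the ``jumps of size $\le M$'' between consecutive partial products into an honest connected subset of the Cayley graph, which is handled by passing to the $M$-neighbourhood as above.
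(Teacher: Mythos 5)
Your proposal is correct and follows the paper's own route exactly: Theorem \ref{thm: main} gives the Hucha property, Proposition \ref{prop: Hucha are not coarsely connected} rules out coarse connectedness, and your expanded partial-products argument for ``finitely generated $\Rightarrow$ coarsely connected'' is precisely the one sketched in Remark \ref{rem: fg pos cone are coarsely connected} (phrased there via $r$-paths supported in $P$, equivalent to your neighbourhood formulation). No gaps; this is the paper's deduction spelled out in slightly more detail.
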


In fact, something stronger holds for Hucha groups: they do not admit positive cones  that can be described by a regular language over the generators (regular positive cones for short). See Corollary \ref{cor: hucha not regular}. Recall that, roughly, a subset  of a finitely generated group is regular if it can be described by the set of paths in a finite labelled graph (see Section \ref{sec: regular} for a precise definition).  
Certainly, finitely generated positive cones are regular, but being regular is a much more stable property. 
For instance, regularity passes to finite index subgroups \cite{Su} while finite generation does not\footnote{An easy example is given by $\Z^2$, which has no finitely generated positive cones but it is a finite index subgroup of $K=\langle a,b\mid aba^{-1}=b^{-1}\rangle$,  which contains a positive cone generated as a semi-group by $a$ and $b$.}. 
We recommend \cite{ARS} for an introduction. 
It is  easy to show that finitely presented groups with a regular positive cone have solvable word problem (see \cite{ARS}).
A more refined criterium for not admitting regular positive cones was obtained by Hermiller and \u Suni\'c in \cite{HS} where they show that no positive cone in a free product of groups is regular. Their criterion is stated in a language-theoretical fashion and finding a geometrical interpretation of it was one of our initial motivations to pursue the present work.


The paper is organized as follows: we start in Section \ref{sec: basics} proving some basic features of the geometry of positive cones. 
Among other things, we show that a positive cone naturally defines a special geodesic on the group with the property that {\em it  goes deep} into the negative cone (see Proposition \ref{prop tool}). 
In Section \ref{sec: hyperbolic}  we recall the basics of hyperbolic geometry to show Proposition \ref{prop: main boundary}. 
In that section we also show that the Hucha property holds for surface groups (Theorem \ref{thm: surface group}) and that there is no quasi-geodesic combings for positive cones of hyperbolic groups (Theorem \ref{thm combing}). 
We remark that results of Section \ref{sec: hyperbolic} are not needed elsewhere in the paper so readers interested on Theorem \ref{thm: main} and its applications might  want to skip the section.
In Section \ref{sec: prieto and hucha}, we introduce the Prieto and Hucha properties, and show basic stability results such as the independence of the generating set that these properties passes to finite index overgroups. 
In Section \ref{sec: trees}, we will study left-orderable groups acting on trees and see how the geometry of the tree dominates the geometry of the group to show our combination theorems (Theorem \ref{thm:induction} and Theorem \ref{thm: main combination}). 
In Section \ref{sec: limit}, we will review the needed facts of limit groups and $\mathbb{Q}$-groups and show Theorem \ref{thm: main}. 
Finally, in Section \ref{sec: regular}, we recall the definition of regular languages, regular positive cones, and observe that Hucha groups do not admit regular positive cones.

\vs


\noindent{\textbf{{Acknowledgments}}} 
We are grateful to R. Potrie, S. Hermiller, Z. \u Suni\'c  and H. L. Su for their interest in this work, and  for many fruitful conversations regarding earlier drafts of this paper.
We thank G. Gardam for pointing out some useful references.
 Y.A. acknowledges partial support from the Spanish Government through grants number MTM2014-54896 and MTM2017-82690-P, and through the ''Severo Ochoa Programme for Centres of Excellence in R\&{}D'' (SEV-2015-0554),
and through European Union's Horizon 2020 research and innovation programme under the Marie Sk\l{}odowska-Curie grant agreement No 777822.
C.R. acknowledges partial
support from FONDECYT 1181548.
J.B. acknowledges support from FONDECYT Postdoc 3190719.

\section{Notation and basics facts}
\label{sec: basics}
Let $\Gamma=(V\Gamma, E\Gamma)$ be a graph. 
A {\it  path or  $1$-sequence} in $\Gamma$, is a function $\alpha\colon \{0,1,\dots, k\}\to V\Gamma$ where
$\alpha(i)=\alpha(i+1)$ or there is an edge connecting $\alpha(i)$ and $\alpha(i+1)$.
We also use the notation $\{\alpha(i)\}_{i=0}^k$ to denote $\alpha$.
We now can define the combinatorial metric on $V\Gamma$, setting $\dist_\Gamma(u,v)$ to be the minimum $k$ such that there is a path $\{v_i\}_{i=0}^k$ with $v_0=u$ and $v_k=v$.
If no such path exists we set $\dist_\Gamma(u,v)=\infty$.

By $\ball_\Gamma(v,r)$ we denote the set $\{u\in V\Gamma \mid \dist_\Gamma(u,v )\leq r\}$, the closed ball of radius $r$.

Let $r\geq 1$. 
An {\it $r$-path or $r$-sequence} $\{v_n\}_{n=1}^k$ is a sequence in $\Gamma$ such that $\dist_\Gamma(v_i,v_{i+1})\leq r$. 
Let $\lambda \geq 1, c\geq 0$. 
An $r$-path is a {\it  $(\lambda,c)$-quasi-geodesic} if  
$$\dfrac{|i-j|}{\lambda}-c \leq \dist_\Gamma(v_i, v_j) \quad \forall\, 1\leq i,j\leq k.$$
A {\it geodesic} is a $1$-path that is a $(1,0)$-quasi-geodesic.

For infinite $r$-paths we will use the term $r$-rays, $r$-quasi-geodesics or geodesic rays.

A subset $S$ of $V\Gamma$ is {\it coarsely connected} if there is some $r\geq 1$ such that for all $u,v$ in $S$ there is an $r$-sequence supported at $S$ starting at $u$ and ending at $v$. 
Equivalently, $S$ is coarsely connected if there is $r\geq 1$ such $\{v\in V\Gamma | \dist(v,S)\leq r\}$ spans a connected subgraph. 

Let $G$ be a group and $X$ a generating set of $G$. 
Generating sets, unless otherwise stated, will assumed to be symmetric, that is $X=X^{-1}$.
We denote by $\Gamma(G,X)$ the {\it Cayley graph} of $G$ with respect to $X$. 
By identifying the group $G$ with the vertices of $\Gamma(G,X)$, $G$ is endowed with a metric $\mathrm{d}_X=\dist_{\Gamma(G,X)}$. 
We also write $|g|_X$ for $\mathrm{d}_X(1_G, g)$. 
By $\ball_X(g,r)$ we denote the ball in $\Gamma(G,X)$ with center $g$ and radius $r$.
We will drop the subscripts $_X$ and $_\Gamma$ when the meaning is clear from the context. 

\subsection{General facts about the geometry of positive cones}
Recall that  $P\subseteq G$ is a {\it positive cone} of $G$, if $P$ is a sub-semigroup and $G=P\sqcup P^{-1}\sqcup \{1\}$ where $P^{-1}=\{g^{-1}:g\in P\}$. 
Moreover, $a\prec b \Leftrightarrow a^{-1}b\in P$ defines a $G$-left-invariant total order on $G$. 
Conversely, if $\prec$ is a $G$-left-invariant total order on $G$, then $P=\{g\in G \mid 1_G\prec g\}$ is a positive cone.
Groups with positive cones must be torsion-free. 

If $\prec$ is a left-order on $G$, and $X$ and $Y$ are subsets of $G$ we will write $X\prec Y$ if for all $x\in X$ and all $y\in Y$, $x\prec y$ holds. 
If $X=\{x\}$ a singleton, we will write simply $x\prec Y$ to denote $\{x\}\prec Y$.

Given $S\subseteq G$, we denote by $\langle S \rangle^+$ the sub-semigroup generated by $S$. 
A positive cone is {\it finitely generated} if there is a finite set $S$ of $G$ such that $P=\langle S \rangle^+$. 
\begin{rem}\label{rem: fg pos cone are coarsely connected}
 If $P$ is a finitely generated positive cone of a group $G$, then it is coarsely connected subset of the Cayley graph of $G$.
Indeed, let $P=\langle S \rangle^+$ with $S$ finite and $G=\langle X \rangle$. 
Let $r=\max_{s\in S} |s|_X$, then there are $r$-paths in $P$ from $1_G$ to $g\in P$ for all $g\in P$. 
\end{rem}

We show now that  positive cones define preferred ways to go to infinity and that positive cones contain arbitrarily large balls. This will be essential for showing that fundamental groups of hyperbolic surfaces do not have coarsely connected positive cones.

\begin{prop}\label{prop tool}
Let $G$ be generated by a finite symmetric set $X$. 
Let $P$ be a positive cone of $G$ with associated order $\prec.$
Let $g_n=\max_{\prec }\ball_X(1_G,n)$. The following holds:
\begin{enumerate}
\item The map $\{g_n^{-1}\}_{n=0}^\infty$ is a geodesic ray in $\Gamma(G,X)$. In particular, $|g_n|_X=n$.
\item $\ball_X(g_n^{-1},n-1) \subseteq P^{-1}$. 
\end{enumerate}
In particular $P^{-1}$ contains  $\bigcup_n \ball_X(g_n^{-1},n-1)$.
\end{prop}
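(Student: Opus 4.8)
The plan is to prove both statements by a single induction on $n$, driven by the identity
\[
g_n \;=\; \max_{\prec}\{\, x g_{n-1} \mid x\in X \,\},
\]
which expresses the new maximum as a left-translate of the previous one by a single generator. The base case is $g_0=1_G$. To establish the identity at step $n$, I would first note that since $X=X^{-1}$ generates the non-trivial group $G$, there is a generator $x_0\in X\cap P$; then $x_0 g_{n-1}\succ g_{n-1}$ while $|x_0 g_{n-1}|_X\le n$, so $x_0 g_{n-1}\in\ball_X(1_G,n)$ and hence $g_n\succ g_{n-1}$. For the reverse inequality, take any $g\in\ball_X(1_G,n)$; writing a geodesic word $g=x_1 g'$ with $x_1\in X$ and $|g'|_X=|g|_X-1\le n-1$, maximality at the previous level gives $g'\preceq g_{n-1}$, and left-invariance of $\prec$ upgrades this to $g=x_1 g'\preceq x_1 g_{n-1}$ (the case $g=1_G$ being covered by $1_G\preceq g_{n-1}\prec x_0 g_{n-1}$). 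Thus $\max_{\prec}\{x g_{n-1}\}$ is an upper bound for $\ball_X(1_G,n)$ that itself lies in $\ball_X(1_G,n)$, which proves the identity and exhibits $g_n=x^{\ast}g_{n-1}$ for some $x^{\ast}\in X$.

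From $g_n\succ g_{n-1}=\max_{\prec}\ball_X(1_G,n-1)$ we get $g_n\notin\ball_X(1_G,n-1)$, and combined with $g_n=x^\ast g_{n-1}\in\ball_X(1_G,n)$ this yields $|g_n|_X=n$, completing the induction. For statement (1), the identity gives $g_n g_{n-1}^{-1}=x^\ast\in X$ with $x^\ast\ne 1_G$ (as $g_n\ne g_{n-1}$), so $\dist(g_{n-1}^{-1},g_n^{-1})=|g_n g_{n-1}^{-1}|_X=1$: consecutive terms of $\{g_n^{-1}\}$ are adjacent. Telescoping along the triangle inequality then bounds $\dist(g_i^{-1},g_j^{-1})\le|i-j|$, while the reverse bound is immediate from $|g_k^{-1}|_X=|g_k|_X=k$ and the triangle inequality applied at $1_G$. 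Hence $n\mapsto g_n^{-1}$ is a geodesic ray.

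Statement (2) then follows cleanly from $|g_n|_X=n$. By left-invariance of the word metric, $\ball_X(g_n^{-1},n-1)=g_n^{-1}\,\ball_X(1_G,n-1)$, so it suffices to check $g_n^{-1}u\in P^{-1}$ for every $u\in\ball_X(1_G,n-1)$. Since $|u|_X\le n-1<n=|g_n|_X$ we have $u\ne g_n$, and maximality gives $u\preceq g_n$, hence $u\prec g_n$; multiplying on the left by $g_n^{-1}$ gives $g_n^{-1}u\prec 1_G$, i.e. $g_n^{-1}u\in P^{-1}$, as required. The one genuinely delicate point in the whole argument is the key identity of the first paragraph: left-invariance only lets one compare $x_1 g'$ with $x_1 g_{n-1}$ after isolating the \emph{first} letter of a geodesic representative of $g$, so the proof hinges on coupling that choice with the inductive maximality of $g_{n-1}$.
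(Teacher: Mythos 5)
Your overall strategy --- induction on $n$ establishing the identity $g_n=\max_{\prec}\{xg_{n-1}\mid x\in X\}$, hence $g_n=x^{\ast}g_{n-1}$ and $|g_n|_X=n$, followed by the left-invariance argument for statement (2) --- is exactly the paper's proof. But the step you use twice to get strict growth is a genuine error: from $x_0\in X\cap P$ you infer $x_0g_{n-1}\succ g_{n-1}$ (and later $1_G\preceq g_{n-1}\prec x_0g_{n-1}$). This conflates left- with right-invariance. From $1_G\prec x_0$, left-invariance gives $g_{n-1}\prec g_{n-1}x_0$, multiplying on the left by $g_{n-1}$; the inequality you assert, $g_{n-1}\prec x_0g_{n-1}$, is instead equivalent to $g_{n-1}^{-1}x_0g_{n-1}\in P$, i.e.\ to positivity of a \emph{conjugate} of $x_0$. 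Left-orders are not conjugation-invariant in general (if every conjugate of a positive element were positive, the order would be bi-invariant), so a positive generator can perfectly well satisfy $x_0g_{n-1}\prec g_{n-1}$, and the step as written fails.

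The gap is local and fixable in two ways. The cheap fix is to replace $x_0g_{n-1}$ by $g_{n-1}x_0$: left-invariance legitimately gives $g_{n-1}\prec g_{n-1}x_0$, and $|g_{n-1}x_0|_X\le n$, so $g_n\succeq g_{n-1}x_0\succ g_{n-1}$; this also repairs your $g=1_G$ case, since $1_G\preceq g_{n-1}\prec g_{n-1}x_0$ shows $g_{n-1}x_0\neq 1_G$, whence your first-letter estimate applies to it and yields $1_G\prec g_{n-1}x_0\preceq\max_{\prec}\{xg_{n-1}\mid x\in X\}$. The paper's own route uses totality together with the symmetry of $X$: for $x\in X$ with $xg_n\neq g_n$, either $g_n\prec xg_n$ or, left-multiplying $xg_n\prec g_n$ by $x^{-1}\in X$, one gets $g_n\prec x^{-1}g_n$; either way $\ball_X(1_G,n)\preceq g_n\prec\max_{\prec}\{xg_n\mid x\in X\}$, giving strictness without ever needing a positive conjugate. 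Everything else in your write-up --- the first-letter decomposition with inductive maximality, the geodesic-ray estimates, and the derivation of statement (2) --- is correct and matches the paper.
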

\begin{proof} 
To show 1, for $0< i\leq n$ let $g_i=\max_{\prec} \ball(1_G,i)$ and assume, by induction on $n$, that $|g_i|_X=i$ and $g_i=x_i g_{i-1}$ with $x_i\in X$.
The base of induction, $n=1$, $g_1=\max_{\prec } X$ and $g_{0}=\{1_G\}$, clearly holds.
Let $g_{n+1}=\max_{\prec} \ball(1_G,n+1)$. 
By definition $|g_{n+1}|_X\leq n+1$. 
So we need to show that $g_{n+1}\notin \ball(1_G,n)$ and that there is some $x\in X$ such that $g_{n+1}=xg_n$.

By left-invariance, $xh\preceq xg_{n}$ for all $x\in X$ and all $h\in \ball(1_G, n)$.
Thus 
$$\ball(1_G, n+1)= \bigcup_{x\in X}x\ball(1_G, n)\preceq\max_{\prec}\{xg_n \mid x\in X\}.$$
Thus $g_{n+1}\in \{xg_n\mid x\in X\}$.
It remains to show that $g_{n+1}\notin \ball(1_G,n)$.
For that, by totality of the order, for each $x\in X$, either $xg_n\prec g_n$ or $g_n\prec xg_n$,
which, by left-invariance it is equivalent to $g_n\prec x g_n$ or $g_n \prec x^{-1} g_n$.
Thus $\ball(1_G, n)\preceq g_n \prec \max_{\prec}\{xg_n \mid x\in X\}$.
This completes the proof of the induction.

It follows that the map $n\mapsto g_n^{-1}$ is a geodesic ray, that is $\dist(g_n^{-1},g_m^{-1})=|m-n|$.   

To show 2, we let  $b\in \ball(1_G, n-1)$. 
By definition we have that $g_n\succ b$ and so by left-invariance we get that $1_G\succ g_n^{-1}b$.  
\end{proof}

\begin{cor}\label{cor: tool}
Let $X=X^{-1}$ be a finite  generating set of $G$ and $P$ a positive cone of $G$.
Every connected component of $P$ in $\ga(G,X)$ is infinite and contains a geodesic ray.
\end{cor}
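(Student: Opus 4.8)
The plan is to exhibit, starting from an \emph{arbitrary} point $p$ of a given component $C$, an explicit geodesic ray lying inside $P$ and passing through $p$; this gives both conclusions at once. The key move is to apply Proposition \ref{prop tool} not to $\prec$ but to the reverse order, and then to translate the resulting ray by $p$.

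First I would record that $P^{-1}$ is itself a positive cone, namely the one associated with the reverse order $\succ$ (where $a\succ b\Leftrightarrow b\prec a$). Applying Proposition \ref{prop tool} to this positive cone, and writing $h_n=\max_{\succ}\ball_X(1_G,n)=\min_{\prec}\ball_X(1_G,n)$, we obtain that $\{h_n^{-1}\}_{n=0}^{\infty}$ is a geodesic ray with $h_0=1_G$ and $\ball_X(h_n^{-1},n-1)\subseteq (P^{-1})^{-1}=P$. In particular $h_n^{-1}\in P$ for every $n\geq 1$, so $P$ already contains a geodesic ray emanating from $1_G$.

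To reach the component of an arbitrary $p\in C$, I would use left multiplication. Since $X=X^{-1}$, the map $L_p\colon x\mapsto px$ is an automorphism of $\Gamma(G,X)$ preserving $\dist_X$ by left-invariance, so $\{p\,h_n^{-1}\}_{n=0}^{\infty}$ is again a geodesic ray. Its initial term is $p\,h_0^{-1}=p$, and for $n\geq 1$ we have $p\,h_n^{-1}\in pP\subseteq P$, because $P$ is a sub-semigroup containing $p$. Thus every term of the ray lies in $P$, and consecutive terms are at distance $1$, so the ray is a connected path inside $P$ through $p$; hence it lies entirely in $C$. Being a geodesic ray, it consists of infinitely many distinct points ($\dist_X(p\,h_n^{-1},p\,h_m^{-1})=|n-m|$), and we conclude that $C$ is infinite and contains a geodesic ray.

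The only real content is the pair of observations that let the single ray produced by Proposition \ref{prop tool} be transported to every component: one must pass to the reverse order to land a ray in $P$ rather than in $P^{-1}$, and then the semigroup inclusion $pP\subseteq P$ combined with $L_p$ being a graph automorphism carries that ray into the component of any prescribed $p$. I expect no genuine obstacle here; the one point deserving a line of verification is that $L_p$ preserves adjacency and distances, which is immediate from the left-invariance of the word metric.
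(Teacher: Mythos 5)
Your proof is correct and follows essentially the same route as the paper: the authors likewise apply Proposition \ref{prop tool} to the positive cone $P^{-1}$ (your ``reverse order'' step, phrased slightly differently) to get a geodesic ray in $P$ from $1_G$, and then left-translate it by an element $g\in P$, using left-invariance of the metric and the semigroup property $gP\subseteq P$. Your only additions are explicit verifications (that $L_p$ is an isometry, that the ray's points are distinct, hence the component is infinite) which the paper leaves implicit.
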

\begin{proof}
Applying Proposition \ref{prop tool} to $P^{-1}$, we get a geodesic ray $\{g_n\}_{i=0}^{\infty}$ starting at $1_G$ and contained (except from $g_0$) in $P$.
Let $g\in P$.
By left-invariance of the action of $G$ on $\ga(G,X)$, $\{gg_n\}_{i=0}^\infty$ is a geodesic ray.
Since $P$ is a sub-semigroup $\{gg_n\}_{i=0}^\infty \subseteq P$.
\end{proof}

We finish this section by showing that left-orderable groups with non-trivial BNS invariant enjoys connected positive cones.

\begin{proof}[Proof of Proposition \ref{prop:BNS}]  
Let  $G$  be a left-orderable finitely generated  by $X=X^{-1}$, and assume that  $\Sigma^1(G)$  is non-empty. 
Then, there is a non-trivial homomorphism $\phi \colon G \to \R$ such that $\phi^{-1}((0,\infty))$ is connected. 
Let $x_0\in X$ such that $\phi(x_0)>0$. 
It is known that  $\{g\in G\mid \phi(x_0)<\phi(g)\}$ is also connected \cite{BNS}.

Now let $K=ker(\phi)$ and $\preceq_K$ be any left-ordering on $K$. 
Using $\phi$, we can lexicographically extend $\preceq_K$ to produce a left-ordering of $G$. 
Precisely we set $1\prec_G g$ if and only if $\phi(g)>0$ or $\phi(g)=0$ and $1 \prec_K g$ (checking that this is a left-order is left to the reader). 
Now, if $g_1$ and $g_2$ are positive in $\preceq_G$ then $g_1x_0$ and $g_2x_0$ both belong to $\{g\in G\mid \phi(x_0)<\phi(g)\}$, which we already pointed out that is connected. 
In particular, there is a path made of positive elements connecting $g_1$ and $g_2$. 
\end{proof}

\section{Positive cones  acting on hyperbolic spaces}
\label{sec: hyperbolic}

In this section we prove Proposition \ref{prop: main boundary}. As stated, the proof works in general for not necesarily proper (i.e. locally compact) metric spaces. Yet, in this section it will only be applied to hyperbolic groups (i.e. proper spaces), thus the reader only interested in these applications can assume that the spaces involved are proper.
We follow \cite{Hamann}.

Let $\Gamma$ be a graph.
The Gromov product of $a$ and $b$ in $V\Gamma$ (with respect to $v$) is defined by 
$$(a|b)_v=\dist(a,v)+\dist(b,v)-\dist(a,b).$$
Let $\delta \geq 0$. The graph $\Gamma$ is $\delta$-hyperbolic if for a given base-point $v\in V\Gamma$ we have
$$ (a|b)_v\geq \min \{(a|c)_v, (b|c)_v\}-\delta$$
for all $a,b,c\in V\Gamma$.
With this definition of hyperbolicity, every geodesic triangle in $\Gamma$ is $(4\delta)$-thin.
Moreover, if $\alpha$ is a geodesic from $v$ to $a$, and $\beta$ is a geodesic from $v$ to $b$ then 
\begin{equation}\label{eq: gromov product measures cancellation}
\dist(\alpha(t),\beta(t))\leq 2\delta \qquad \forall t\in [0, (a|b)_v].
\end{equation}
Finally, if $\Gamma$ is $\delta$-hyperbolic with respect to some base-point $v$, then it is $\delta(u)$-hyperbolic for any other base-point $u$.

Let $\epsilon > 0$ with $\epsilon' = \exp(\epsilon \cdot \delta) -1 < \sqrt{2} - 1$.
For $a,b\in V\Gamma$, define $\rho_\epsilon (a,b)= \exp (\epsilon \cdot (a|b)_v)$ if $a\neq b$, and $\rho_\epsilon(a,a)=0$. The function $\rho_\epsilon$ is a pseudo-metric. However we can construct a metric on $V\Gamma$ by setting
$$\dist_\epsilon(a,b)= \inf_{n}\{ \sum_{i=1}^n \rho_\epsilon (z_i, z_{i+1}) \mid z_1=a, z_{n+1}=b, z_i\in V\Gamma \} $$
and this metric satisfies that $(1-2\epsilon')\rho_\epsilon (a,b) \leq \dist_\epsilon (a,b) \leq \rho_\epsilon(a,b)$.

Let $\hat{\Gamma}$ be the metric completion of $V\Gamma$ with respect to $\dist_\epsilon$. Then, one can define the Gromov boundary $\partial \Gamma$ of $\Gamma$ as $\hat \Gamma \setminus V\Gamma$. 
The topology of the Gromov boundary is independent of the base point $v$ and the parameter $\epsilon$.

Let $G$ be a group acting by isometries on $\Gamma$. 
This action naturally extends to a continuous action on $\hat{\Gamma}$ and hence on $\partial \Gamma$.
Given a subset $H$ of $G$ and a vertex $v\in V\Gamma$, we denote by $\Lambda(H)$  the intersection of the closure of the orbit $Hv$ in $\hat{\Gamma}$ (with respect to the metric $\dist_\epsilon$) with $\partial \Gamma$.
The set   $\Lambda(H)$ is independent of $v$.

The following theorem classifies the actions of $G$ on $\Gamma$ in terms of the dynamics of $G$ on $\Lambda(G)$.

\begin{thm}\cite[Theorem 2.7]{Hamann}\label{thm: classification actions}
Suppose that $G$ acts by isometries on an hyperbolic geodesic space $\Gamma$.
Then one of the following holds.
\begin{enumerate}
\item[(i)] {\it (elliptic action)} $\Lambda(G)$ is empty.
\item[(ii)] {\it (parabolic action)}  $G$ fixes a unique point of $\Lambda(G)$.
\item[(iii)] {\it (dihedral action)} $\Lambda(G)$ has two points.
\item[(iv)] There are two elements $g,h\in G$ such that $|\Lambda( \langle g \rangle )\cup \Lambda(\langle h\rangle) |=4$.
\end{enumerate}

\end{thm}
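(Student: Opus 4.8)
The plan is to classify the action by two successive dichotomies: first whether $G$-orbits are bounded, and then, when they are unbounded, whether $G$ contains a loxodromic isometry and how the fixed-point pairs of such isometries are arranged in $\partial\Gamma$. The whole argument rests on the standard trichotomy for a single isometry $g$, which I would establish first: writing $\tau(g)=\lim_n \tfrac1n \dist(v,g^n v)$ for the stable translation length, $g$ is \emph{elliptic} if $\langle g\rangle$ has bounded orbits (so $\Lambda(\langle g\rangle)=\emptyset$), \emph{parabolic} if orbits are unbounded but $\tau(g)=0$ (so $\Lambda(\langle g\rangle)$ is a single point), and \emph{loxodromic} if $\tau(g)>0$, in which case $n\mapsto g^n v$ is a quasi-geodesic, $\Lambda(\langle g\rangle)=\{g^+,g^-\}$ consists of two distinct points, and $g$ acts on $\partial\Gamma$ with north--south dynamics. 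Setting this up with the metric $\dist_\epsilon$ and its completion $\hat\Gamma$, using \eqref{eq: gromov product measures cancellation} and $4\delta$-thinness, is routine. Once it is in place, the elliptic case is immediate: if every orbit $Gv$ is bounded it has no accumulation point on $\partial\Gamma$, so $\Lambda(G)=\emptyset$, which is (i). Hence I may assume orbits are unbounded and $\Lambda(G)\neq\emptyset$.

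First suppose $G$ contains no loxodromic element. I claim $\Lambda(G)$ is then a single point fixed by $G$, giving (ii); this is the delicate step. Arguing contrapositively, if two distinct points $\xi\neq\eta$ both lay in $\Lambda(G)$, I would pick $g_n,h_n\in G$ with $g_n v\to\xi$ and $h_n v\to\eta$; because such sequences separate in the boundary, a suitable composite $g_m h_n^{-1}$ (for large $m,n$) moves $v$ a long distance ``across'' a thin triangle, and a Gromov-product estimate shows this element has positive stable translation length, i.e.\ is loxodromic, contradicting the hypothesis. Thus $|\Lambda(G)|=1$, and since $\Lambda(G)$ is closed and $G$-invariant, $G$ fixes its unique point.

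Next suppose $G$ contains a loxodromic $g$ with fixed-point pair $F=\{g^+,g^-\}\subseteq\Lambda(G)$, and analyse the $G$-action on $F$. If every element of $G$ preserves $F$ setwise, then $G$ coarsely preserves a quasi-geodesic line joining $g^+$ to $g^-$, every orbit stays in a bounded neighbourhood of that line, so $\Lambda(G)=F$ has exactly two points --- case (iii) (whether the whole group fixes $g^\pm$ or an index-two part swaps them accounts for the lineal/dihedral distinction). If instead some $h\in G$ does not preserve $F$, then $hgh^{-1}$ is loxodromic with fixed-point pair $hF$, and I would invoke the ping-pong dichotomy: two loxodromics either share a fixed point, or already have four distinct endpoints. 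In the latter case $g$ and $hgh^{-1}$ satisfy $|\Lambda(\langle g\rangle)\cup\Lambda(\langle hgh^{-1}\rangle)|=4$, which is (iv). In the former case all loxodromics of $G$ share a common point $\xi$; conjugating $g$ by powers of a loxodromic with a different second endpoint produces infinitely many distinct points of $\Lambda(G)$, so $\Lambda(G)$ is infinite, while a short permutation argument on fixed-point pairs forces every element of $G$ to fix $\xi$ (uniqueness again follows, since two fixed points would return us to the two-point case (iii)). This is once more case (ii).

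The main obstacle is the non-properness of $\Gamma$: since $\partial\Gamma$ need not be compact, neither of the two hard lemmas --- ``unbounded with no loxodromic forces a unique global fixed point'' and ``two loxodromics either share an endpoint or play ping-pong with four endpoints'' --- can be proved by extracting convergent subsequences on the boundary. Instead each must be argued by direct Gromov-product estimates in $(\Gamma,\dist)$, controlling cancellation via \eqref{eq: gromov product measures cancellation}, and then transported to $\hat\Gamma$ through the comparison $(1-2\epsilon')\rho_\epsilon\le \dist_\epsilon\le\rho_\epsilon$. I expect essentially all the genuine work to sit in these two estimates, after which the case bookkeeping above is straightforward.
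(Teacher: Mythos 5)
You should first note a point of calibration: the paper does not prove this statement at all --- it is imported verbatim from Hamann \cite[Theorem 2.7]{Hamann} --- so there is no internal proof to compare against, and your attempt has to be measured against the standard literature argument (Gromov's classification for not-necessarily-proper hyperbolic spaces). Your architecture is essentially that argument, and the case bookkeeping is sound: splitting on $\Lambda(G)$, then on existence of a loxodromic, then on how the fixed-point pairs of loxodromics sit in $\partial\Gamma$ correctly folds the focal (quasi-parabolic) case into (ii) and the lineal/dihedral case into (iii); the step ``every element preserves $F$ setwise $\Rightarrow$ orbits stay near the axis'' is indeed valid without properness, because the translates $h\cdot\{g^nv\}$ are quasi-geodesics with the \emph{same} constants as the axis, so Morse stability applies uniformly. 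You are also right, in fact cleaner than you claim, about case (iv): since (iv) only asserts $|\Lambda(\gen{g})\cup\Lambda(\gen{h})|=4$, no ping-pong is needed --- two fixed pairs are either disjoint (giving (iv) by counting) or share a point. One small correction: in a non-proper space unbounded orbits do \emph{not} imply $\Lambda(G)\neq\emptyset$, so the first dichotomy should be taken on $\Lambda(G)$ being empty or not, rather than on orbit boundedness; as written this is harmless, since emptiness lands in (i) regardless.

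The one step that would fail as literally written is the production of a loxodromic in the no-loxodromic case. The usable criterion is: an isometry $f$ whose displacement $\dist(v,fv)$ exceeds the Gromov product of $fv$ and $f^{-1}v$ at $v$ by a definite multiple of $\delta$ is loxodromic, with $\tau(f)\geq \dist(v,fv)-(fv\mid f^{-1}v)_v-O(\delta)$ in the paper's normalization of the product; so you must control \emph{both} $fv$ and $f^{-1}v$. For your candidate $f=g_mh_n^{-1}$ one has $fv=g_m(h_n^{-1}v)$ and $f^{-1}v=h_n(g_m^{-1}v)$, and the hypotheses $g_mv\to\xi$, $h_nv\to\eta$ say nothing whatsoever about $h_n^{-1}v$ or $g_m^{-1}v$, so the ``thin triangle'' picture you describe is simply not available for this element. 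The standard repairs are either a coarse Serre-type lemma (the exact identity $\dist(v,abv)=\dist(v,av)+\dist(v,bv)-(a^{-1}v\mid bv)_v$, in the paper's normalization, combined with the criterion above shows that $\tau(ab)=\tau(a)=\tau(b)=0$ forces $a^{-1}v$ and $bv$ to be aligned, and propagating this over all of $G$ shows the directions of escape coincide), or the direct argument that if no $f\in G$ satisfies the displacement-versus-product inequality then the orbit accumulates at a single boundary point. This lemma, together with the ``common endpoint'' permutation argument you gesture at (which additionally needs that a loxodromic fixes exactly two boundary points --- true, but itself requiring proof without properness), is where all the substance of Hamann's theorem lives; your proposal correctly locates the hard core and arranges the surrounding cases correctly, but does not supply it.
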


The action is called {\it non-elementary} if $|\Lambda(G)|> 2$ (in fact this is equivalent to  $|\Lambda(G)|=\infty$).
We will say that a non-elementary action is {\it of general type} if it lies on case (iv) of the previous theorem.

By a theorem of Osin \cite[Theorem 1]{Osin} every non-elementary acylindrical action on a hyperbolic space by isometries is of general type.
Recall that the action of $G$ on $\Gamma$  is {\it acylindrical} if for every $r\geq 0$, there are $N$ and $R$ such that for any pair $a,b\in V\Gamma$ with $\dist(a,b)\geq R$, the set $\{g\in G \mid \dist(a,ga)\leq r \text{ and  } \dist(b, gb)\leq r\}$ has at most $N$ elements.  
This implies, although it is much easier to prove, that 

\begin{rem}\label{rem: hyperbolic groups are of general type}
non-elementary proper co-compact actions are of general type. See also \cite[Chapitre 8, Th\'eor\`eme 30]{GH} for a direct proof. 
\end{rem}

An element $g\in G$ is called {\it loxodromic} if $|\Lambda(\langle g\rangle)|=2$.
Equivalently, an element $g\in G$ is loxodromic if $n\mapsto g^n v$, $n\in \Z$, defines a quasi-geodesic. The {\it loxodromic limit set} of $G$, denote $\cL(G)$, is the set $\Lambda( \{g \in G \mid g\;\, \text{loxodromic} \})$. 
The loxodromic limit set is  {\it bilaterally dense in $\Lambda(G)$} if for every pair of disjoint non-empty open subsets $A$, $B$ of $\Lambda(G)$, there is  a loxodromic  element $g\in G$ such that $\Lambda(\langle g \rangle )$ has non-trivial intersection with both $A$ and $B$, in other words, the quasi-geodesic $g^nv$ goes from  $A$ to $B$.

Proposition \ref{prop: main boundary} follows from the following.
\begin{thm}\cite[Theorem 2.9.]{Hamann}
Suppose that $G$ acts non-elementary by isometries on an hyperbolic graph $\Gamma$. Then
\begin{enumerate}
\item $\Lambda(G)$ is closed without isolated points (i.e is a perfect set).
\item the hyperbolic limit set is bilateraly dense in $\Lambda(G)$ if and only if the action is of general type.
\end{enumerate} 
\end{thm}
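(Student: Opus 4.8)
The plan is to treat the two items separately, relying throughout on the \emph{convergence dynamics} of the action on $\partial\Gamma$: for any sequence $(g_n)$ in $G$ one can pass to a subsequence and find $\xi^+,\xi^-\in\partial\Gamma$ (possibly equal) so that $g_n\eta\to\xi^+$ uniformly for $\eta$ ranging over compact subsets of $\partial\Gamma\setminus\{\xi^-\}$. This is the standard North--South behaviour of sequences of isometries, and it is exactly what the thin-triangle estimate \eqref{eq: gromov product measures cancellation} yields once one passes to the visual metric $\dist_\epsilon$; I would first isolate it as a lemma (keeping the hedge ``compact subsets'' since $\partial\Gamma$ need not be compact in the non-proper setting). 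I also record, as part of the classification preceding the statement (Theorem \ref{thm: classification actions}), that a non-elementary action contains a loxodromic element $g$, with distinct fixed points $g^+,g^-\in\Lambda(G)$ on which it acts with North--South dynamics.

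For item (1), closedness is immediate: by definition $\Lambda(G)=\overline{Gv}\cap\partial\Gamma$ is the intersection of two closed subsets of the completion $\hat\Gamma$. For the absence of isolated points I would first show that loxodromic fixed points are not isolated: fixing $g$ and choosing $\eta\in\Lambda(G)\setminus\{g^+,g^-\}$ (possible since $|\Lambda(G)|\ge 3$), the points $g^n\eta$ are infinitely many distinct elements of $\Lambda(G)$ converging to $g^+$, so $g^+$, and symmetrically $g^-$, lie in the derived set $\Lambda'$ of non-isolated points. Since $\Lambda'$ is closed and $G$-invariant (homeomorphisms preserve non-isolatedness), it suffices to prove $\overline{G\{g^+,g^-\}}=\Lambda(G)$. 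Given $\xi\in\Lambda(G)$, take $g_nv\to\xi$, pass to a subsequence with $g_n^{-1}v\to\xi'$, and apply the convergence lemma: one of $g^+,g^-$ differs from $\xi'$, so its $g_n$-images lie in $G\{g^+,g^-\}$ and converge to $\xi$. Hence $\xi\in\Lambda'$ and $\Lambda(G)$ is perfect.

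For item (2), the forward implication is the easy one, and I would argue it by contraposition inside the non-elementary regime. Suppose the action is not of general type; then it is quasi-parabolic and $G$ fixes a unique point $\omega\in\Lambda(G)$. Every loxodromic $g$ must then fix $\omega$, so $\omega\in\{g^+,g^-\}=\Lambda(\langle g\rangle)$; that is, every loxodromic axis has one endpoint at $\omega$. Choosing two disjoint non-empty open subsets $A,B$ of $\Lambda(G)$ neither of which contains $\omega$ (possible as $\Lambda(G)$ is infinite), no loxodromic can meet both, since its only limit point other than $\omega$ cannot lie in the two disjoint sets at once. Thus bilateral density fails, giving the contrapositive.

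The substantial direction is that general type implies bilateral density, and this is where I expect the main difficulty. Here I would exploit that case (iv) of Theorem \ref{thm: classification actions} supplies two loxodromics with four distinct fixed points, giving genuine freedom to steer axes. Given disjoint non-empty open $A,B\subseteq\Lambda(G)$, pick $\alpha\in A$, $\beta\in B$ and sequences $a_nv\to\alpha$, $b_nv\to\beta$. Using the convergence lemma together with the abundance of independent fixed points (so one can always avoid the single ``repelling'' direction of each sequence), I would manufacture elements $p,q\in G$ contracting a fixed loxodromic $g$ so strongly that $pgp^{-1}$ is loxodromic with both fixed points inside $A$ and $qgq^{-1}$ is loxodromic with both fixed points inside $B$. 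The final step is a Klein ping-pong: taking disjoint closed neighbourhoods $K_A\subseteq A$, $K_B\subseteq B$ of the attracting points and large powers, the element $(pgp^{-1})^n(qgq^{-1})^m$ has its attracting fixed point in $K_A$ and its repelling fixed point in $K_B$, hence is loxodromic and meets both $A$ and $B$. The delicate point, and the main obstacle, is the bookkeeping ensuring the ping-pong neighbourhoods can be chosen disjoint with the repelling directions kept outside them; this is exactly where non-properness of $\Gamma$ must be handled carefully, since one cannot invoke compactness of $\partial\Gamma$ and must control everything through $\dist_\epsilon$ and the estimate \eqref{eq: gromov product measures cancellation}.
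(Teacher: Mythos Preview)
The paper does not prove this statement at all: it is quoted verbatim as \cite[Theorem 2.9]{Hamann} and used as a black box to derive Theorem~\ref{thm: main boundary}. There is therefore no ``paper's own proof'' to compare your proposal against. Your sketch is a reasonable outline of the standard convergence-action argument one finds in the literature (and presumably close in spirit to what Hamann does), but as far as this paper is concerned the result is simply cited, not reproved.
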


\begin{proof}[Proof of Proposition \ref{prop: main boundary}]
Let $\eta \in \Lambda(G)$ and $A$ an open neighbourhood of $\eta$ in $\Lambda(G)$. 
Since the action is non-elementary, $\Lambda(G)$ is perfect and thus $\eta$ is not isolated and we can assume that there is $\nu\neq \eta$ in $A$.
Since the topology on $\partial \Gamma$ (and $\Lambda(G)$) is Hausdorff, there are disjoint open neighborhoods $U$ and $V$ of $\eta$ and $\nu$ respectively lying in $A$.
Since the action of $G$ is of general type, the hyperbolic limit set is biliaterally dense in $\Lambda(G)$ and thus, there is a loxodromic element $g$ such that $\Lambda(\langle g \rangle)$ meets $U$ and $V$. 
Now, if $P$ is positive cone of $G$, we have that either $g$ or $g^{-1}$ lies in $P$.
And thus either $\Lambda(\langle g \rangle^{+})$ or $\Lambda(\langle g^{-1} \rangle^{+})$ lies in $\Lambda(P)$ and thus $\Lambda(P)\cap A \neq \emptyset$.
\end{proof}

\subsection{Positive cones in surface groups}
\label{subsec surface}

As promised, we show how the ideas of the proof of Corollary \ref{cor: free groups}  can be adapted to the case of surface groups. The proof uses strongly the topology of $\mathbb{H}^2$. Specifically, that $\mathbb{H}^2$ can be cut into two infinite connected components using a curve.

\begin{thm} \label{thm: surface group}
 Suppose that $G$ is the fundamental group of a closed hyperbolic surface. Then, no positive cone in $G$ is coarsely connected.
\end{thm}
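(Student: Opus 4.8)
The plan is to mimic the free-group argument from Corollary~\ref{cor: free groups}, but to account for the fact that the Cayley graph of a surface group $G$ is only quasi-isometric to $\mathbb{H}^2$ and is not a tree, so removing a ball no longer disconnects the space into obvious components. Fix a positive cone $P$ with order $\prec$, and recall from Proposition~\ref{prop tool} that for every $n$ there is an element $g_n^{-1}$ whose $(n-1)$-ball lies entirely in $P^{-1}$; call such a large ball of negative elements a \emph{negative swamp}. The goal is to produce, for every prescribed coarseness constant $r$, two positive elements that cannot be joined by an $r$-path avoiding a sufficiently deep negative swamp, thereby showing $P$ is not coarsely connected. The essential geometric input is planarity: $G$ acts properly cocompactly on (a space quasi-isometric to) $\mathbb{H}^2$, so the orbit map is a quasi-isometry and we may transport questions of separation to $\mathbb{H}^2$, where a bi-infinite curve separates the plane into two components each carrying a full half of the boundary circle $\partial\mathbb{H}^2 = \Lambda(G)$.

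The first step is to realize the negative swamp as a genuine coarse separator. Using Proposition~\ref{prop tool}, the ray $\{g_n^{-1}\}$ goes to a point $\xi \in \partial \mathbb{H}^2$, and around each $g_n^{-1}$ sits an arbitrarily large ball of negative elements. I would thicken and arrange these negative balls into a coarsely connected ``wall'' $W \subseteq P^{-1}$ that, under the quasi-isometry to $\mathbb{H}^2$, coarsely separates the plane: concretely, one wants a subset of $P^{-1}$ whose image coarsely contains a bi-infinite quasi-geodesic (or a quasi-geodesic segment long enough relative to $r$) so that its complement in $\mathbb{H}^2$ has two components, each accumulating on a nondegenerate arc of $\Lambda(G)$. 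This is exactly where planarity is used and where the free-group proof must be upgraded: in a tree a single ball disconnects, whereas in $\mathbb{H}^2$ one needs a whole separating wall, and one must check that a wall built out of balls of radius $n-1$ around points on a geodesic ray is thick enough that no $r$-path can ``jump over'' it once $n > r$.

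The second step applies Theorem~\ref{thm: main boundary}. Since $G$ acts properly cocompactly and non-elementarily on $\mathbb{H}^2$, the action is of general type (Remark~\ref{rem: hyperbolic groups are of general type}), so the $P$-orbits accumulate on \emph{every} point of $\Lambda(G)$. In particular each of the two boundary arcs cut out by the wall $W$ contains accumulation points of $P$, so on each side of the wall there sit positive elements arbitrarily far out. Choosing one positive element $p_1$ on each side of $W$, any path from $p_1$ to $p_2$ inside $G$ must, after transporting to $\mathbb{H}^2$, cross the separating wall; if the path is an $r$-path with $r$ smaller than the depth $n-1$ of the swamp, each of its jumps stays within distance $r$ of the previous vertex and so cannot cross a wall of negative elements of thickness exceeding $r$. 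Hence the path must pass through $W \subseteq P^{-1}$, contradicting that it is supported on $P$. As $r$ is arbitrary, $P$ is not coarsely connected.

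The main obstacle I anticipate is making the separation rigorous: turning the collection of negative balls guaranteed by Proposition~\ref{prop tool} into a single wall whose complement in $\mathbb{H}^2$ has exactly two components, each meeting $\Lambda(G)$ in a set with nonempty interior, and then transferring this topological separation back through the quasi-isometry to a combinatorial statement about $r$-paths in the Cayley graph. One must control quasi-isometry constants so that ``thickness exceeds $r$'' survives the transport, and one must verify that the two boundary arcs are genuinely nondegenerate open sets so that Theorem~\ref{thm: main boundary} supplies positive elements on both sides. I expect the bookkeeping around Jordan-curve-type separation in the coarse/planar setting — rather than any single clever idea — to be the delicate part; conceptually everything reduces to the slogan that $P$ looks like all of $\partial G$ at infinity, while a deep negative swamp provides a local barrier that an $r$-path of bounded step size cannot cross.
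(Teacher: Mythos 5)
Your overall strategy is the same as the paper's (a negative barrier at infinity, planarity for separation, Theorem~\ref{thm: main boundary} to find positive elements on both sides), but the step you defer as ``the main obstacle'' is not bookkeeping: it is the one genuinely new idea the proof needs, and it does not follow from thickening the balls of Proposition~\ref{prop tool}. Those balls all sit along a \emph{single} geodesic ray $\{g_n^{-1}\}$ converging to a \emph{single} point $\xi\in\partial G$, and a one-ended ray, however thickly neighbourhooded inside $P^{-1}$, does not separate $\mathbb{H}^2$: its complement is connected, since one can travel around its finite endpoint. So no ``arranging'' of the balls you have can produce a wall ``coarsely containing a bi-infinite quasi-geodesic'' --- you need a second family of negative elements tending to a boundary point different from $\xi$, and neither Proposition~\ref{prop tool} nor Theorem~\ref{thm: main boundary} hands you one (the latter gives \emph{positive} elements near every boundary point, which is the opposite of what the wall requires). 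The paper fills exactly this gap: fixing $N_0$, it picks $h\in g_{2N_0+1}^{-1}X^{\pm1}\subseteq P^{-1}$ that does \emph{not} stabilize $\xi$ --- possible since otherwise the stabilizer of $\xi$ would contain a finite-index subgroup of $G$, contradicting that the action is of general type (Remark~\ref{rem: hyperbolic groups are of general type}) --- and concatenates the ray $\{g_{2N_0+1+n}^{-1}\}_{n\geq0}$ with the translated ray $\{hg_m^{-1}\}_{m\geq 0}$, which tends to $h\xi\neq\xi$. Negativity of the whole bi-infinite path together with its $N_0$-neighbourhood is then a semigroup computation: $h\in P^{-1}$ and $g_m^{-1}\ball(1_G,N_0)\subseteq P^{-1}$ for $m\geq N_0$, so $hg_m^{-1}\ball(1_G,N_0)\subseteq P^{-1}$, with the finitely many intermediate indices handled by $h\ball(1_G,N_0)\subseteq\ball(g_{2N_0+1}^{-1},N_0+1)$. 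Without this (or an equivalent) device your wall has only one end and the separation claim fails.

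A secondary point: your route through a general quasi-isometry to $\mathbb{H}^2$ creates the coarse Jordan-curve difficulties you anticipate, and the paper simply avoids them. Since coarse connectedness of a positive cone is independent of the finite generating set, one may take the standard generating set of the surface group, for which the Cayley graph is topologically embedded in $\mathbb{H}^2$ as a $1$-complex; a bi-infinite path converging to two distinct points of $\partial G$ then honestly separates the Cayley graph into two infinite components, with no quasi-isometry constants to control. Your ``thickness exceeding $r$'' mechanism is also superfluous in this formulation: once $\ball(\gamma(i),N_0)\subseteq P^{-1}$ for all $i$, no vertex of $\gamma$ lies in the $N_0$-neighbourhood of $P$, so the positive elements supplied by Theorem~\ref{thm: main boundary} on the two sides of $\gamma$ lie in different components of that neighbourhood, which is exactly the failure of $N_0$-coarse connectedness.
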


\begin{proof} 
Assume that $X$ is the standard generating set of $G$, so that the Cayley graph, not only is quasi-isometric to $\mathbb{H}^2$, but also it  is embedded as a 1-dimensional topological subspace.
In particular, any path in $G$ connecting two different points of $\partial G$ separates the Cayley graph into two infinite connected components. 
We fix a positive cone $P$ and the corresponding left-order $\prec.$  
We also fix $N_0>1$.  
By Proposition \ref{prop: main boundary}, $P$ accumulates on every point of $\partial G$, so to show that $P$ is not $N_0$-connected, it is enough to build a bi-infinite path $\gamma:\Z\to G$ such that 
\begin{enumerate}
\item[$i)$] the $N_0$-neighbourhood of $\gamma$ is contained in $P^{-1}$ i.e. $\ball_X(\gamma(i), N_0)\subseteq P^{-1}\, \forall i\in \mathbb{Z}$,
\item[$ii)$] $\gamma(i)$ converges, as $i$ tends to $\pm\infty$, to two different boundary points of $\partial G$.
\end{enumerate}

We let $g_n=\max_{\prec}\ball_X(1_G, n)$. 
By  Proposition \ref{prop tool}, $n\mapsto g_n^{-1}$ is a geodesic ray such that $\ball(g_n^{-1},n)\subseteq P^{-1}$. 
We let $\xi=\lim g_n^{-1}\in \Lambda(G)$. 

Given any $g\in G$ there exist some generator $x\in X^{\pm 1}$ such that $gx$ does not belong to the stabilizer of $\xi$. Indeed, otherwise $\langle gX^{\pm  1} \rangle\supseteq \langle \{x_1x_2\mid x_1,x_2\in X^{\pm 1}\}  \rangle$ is a finite index subgroup  of $G$ that stabilizes $\xi$, contradicting that the action of $G$ on itself is of general type (Remark \ref{rem: hyperbolic groups are of general type}). 
Take $h\in g_{2N_0+1}^{-1}X^{\pm 1}\subseteq P^{-1}$ not belonging to the stabilizer of $\xi$ and define $\gamma:\Z\mapsto G$ as:
$$\gamma(n)=g_{2N_0+1+n}^{-1}\text{ if }n\geq 0;\qquad\qquad \gamma(n)=hg_{-(n+1)}^{-1}\text{ if }n<0.$$

We claim that $\gamma$ is the desired path.  
Notice that $\lim_{n\to+\infty}\gamma(n)=\xi$, $\lim_{n\to-\infty}\gamma(n)=h\xi$ and $h\xi$ differs from $\xi$ by construction of $h$. 
Thus $ii)$ holds. 

To see condition $i)$, take $n\in\Z$. 
If $n\geq 0$ the condition follows from the definition of $g_n$ and Proposition \ref{prop tool}. 
If $n\in[-N_0,-1]$ we get that $\gamma(n)\in h\ball(1_G, N_0)\subseteq \ball(g_{2N_0+1}^{-1},N_0+1)$ and therefore $\gamma(n)\ball(1_G,N_0)\subseteq \ball(g_{2N_0+1}^{-1},2N_0+1)\subseteq  P^{-1}$. 
Finally, if $n<-N_0$ we get $\gamma(n)=hg_{m}^{-1}$ for some $m\geq N_0$ and therefore $\gamma(n)\ball(1_G,N_0)=hg_{m}^{-1}\ball(1_G,N_0)\subseteq P^{-1}$ since $h\in P^{-1}$ and $g_{m}^{-1}\ball(1_G,N_0)\subseteq g_{m}^{-1}\ball(1_G,m)\subseteq P^{-1}$. 
\end{proof}

\subsection{Quasi-geodesic combings  of  positive cones}
\label{subsec combing}
Let $G$ be a non-elementary hyperbolic group and $X$ a finite generating set.
Let $\Gamma=\Gamma(G,X)$ be the Cayley graph.
A {\it $(\lambda, c)$-quasi-geodesic combing} of $P$ is a subset $\cP$ of $(\lambda,c)$-quasi-geodesics paths in $\Gamma$ starting at $1_G$ and ending at vertices of $P$, and such that for each $g\in P$, there is at least a path in $\cP$ from $1_G$ to $g$.
We say that $\cP$ is supported on the $r$-neighbourhood of $P$ if every $p\in \cP$ lies in  $\cup_{g\in P} \ball_X(g,r)$.

\begin{thm}\label{thm combing}
Let $G$ be a finitely generated and non-elementary hyperbolic group and $P$ a positive cone of $G$. Then, there is no quasi-geodesic combing of $P$ supported on a neighbourhood of $P$.

\end{thm}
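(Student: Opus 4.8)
The plan is to argue by contradiction, exploiting the ``negative swamp'' produced by Proposition \ref{prop tool}: a quasi-geodesic combing would be forced to route a path to a \emph{positive} element near the boundary point $\xi=\lim_n g_n^{-1}$ straight through an arbitrarily deep negative region, contradicting that the path stays in a bounded neighbourhood of $P$. So suppose $\cP$ is a $(\lambda,c)$-quasi-geodesic combing of $P$ supported on the $r$-neighbourhood of $P$, and let $\delta$ be a hyperbolicity constant for $\Gamma=\Gamma(G,X)$. By stability of quasi-geodesics in hyperbolic spaces (the Morse lemma, see \cite{GH}), there is a constant $M=M(\lambda,c,\delta)$ such that every $(\lambda,c)$-quasi-geodesic lies within Hausdorff distance $M$ of any geodesic joining its endpoints.

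Apply Proposition \ref{prop tool} with $g_n=\max_{\prec}\ball_X(1_G,n)$: the map $n\mapsto g_n^{-1}$ is a geodesic ray and $\ball_X(g_n^{-1},n-1)\subseteq P^{-1}$, so arbitrarily large balls centred along this ray are entirely negative. Let $\xi=\lim_n g_n^{-1}\in\partial G=\Lambda(G)$. Since the action of $G$ on its Cayley graph is non-elementary, proper and cocompact, it is of general type (Remark \ref{rem: hyperbolic groups are of general type}), so Theorem \ref{thm: main boundary} gives $\Lambda(P)=\partial G$; in particular there is a sequence of positive elements $h_k\in P$ with $h_k\to\xi$.

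Fix $n$ with $n-1-M-2\delta>r$. Since both $h_k$ and the points $g_m^{-1}$ converge to $\xi$, for $k$ large (and $N\ge n$ large) the Gromov product $(h_k\mid g_N^{-1})_{1_G}$ is at least $n$, and \eqref{eq: gromov product measures cancellation} applied to the geodesics $[1_G,h_k]$ and $[1_G,g_N^{-1}]$ then furnishes a point $p\in[1_G,h_k]$ with $\dist(p,g_n^{-1})\le 2\delta$. Let $p_{h_k}\in\cP$ be a combing path from $1_G$ to the positive element $h_k$. By the Morse lemma there is a vertex $q$ on $p_{h_k}$ with $\dist(q,p)\le M$, hence $\dist(q,g_n^{-1})\le M+2\delta$ and therefore $\ball_X(q,\,n-1-M-2\delta)\subseteq\ball_X(g_n^{-1},n-1)\subseteq P^{-1}$. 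Thus $\dist(q,P)\ge n-1-M-2\delta>r$, contradicting that $p_{h_k}$ is supported on the $r$-neighbourhood of $P$.

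The main obstacle is the fellow-travelling step: one must guarantee that the combing path to a positive element near $\xi$ genuinely enters a swamp of depth exceeding $r$, rather than detouring around it. This is precisely where hyperbolicity is indispensable: the Morse lemma forbids the quasi-geodesic $p_{h_k}$ from wandering far from the geodesic $[1_G,h_k]$, while \eqref{eq: gromov product measures cancellation} together with the boundary accumulation of $P$ (Theorem \ref{thm: main boundary}) forces that geodesic to pass close to the swamp centre $g_n^{-1}$. Combining the three bounds ($2\delta$ from fellow-travelling, $M$ from the Morse lemma, and the swamp depth $n-1$ from Proposition \ref{prop tool}) and letting $n$ grow then yields the contradiction for every choice of $\lambda,c,r$.
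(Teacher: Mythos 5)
Your proof is correct and follows essentially the same route as the paper's: both combine the negative horoball $\bigcup_n\ball(g_n^{-1},n-1)\subseteq P^{-1}$ from Proposition \ref{prop tool}, the accumulation of $P$ at $\xi$ via Theorem \ref{thm: main boundary}, the fellow-travelling estimate \eqref{eq: gromov product measures cancellation}, and Morse stability of quasi-geodesics to force a combing path into the swamp. The only difference is cosmetic bookkeeping: the paper exhibits a positive element $z$ inside $\ball(g_N^{-1},D+r+2\delta)\subseteq P^{-1}$, while you equivalently bound $\dist(q,P)>r$ for a point $q$ on the combing path.
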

\begin{proof}
Let $X$ be a finitely generating set of $G$ and $\Gamma=\Gamma(G,X)$ its associated Cayley graph.
Let $P$ be a positive cone of $G$ and suppose that there exists $\lambda \geq 1$, $c\geq 0$ and $r\geq 0$ and a $(\lambda,c)$-quasi-geodesic combing $\cP$ of $P$ supported on the $r$-neighbourhood of $P$.
Let $\prec$ denote the associated order of $P$.

Let $g_n=\max_\prec\mathbb{B}(1_G,n)$. 
By Proposition \ref{prop tool}, $n\mapsto g_n^{-1}$ is a geodesic ray. 
We denote by $\xi\in \partial G$ its limit as $n\to \infty$. 
Also by Proposition \ref{prop tool}, the horoball $\cup_n \ball(g_n^{-1},n-1)$ is contained in $P^{-1}$.

By Proposition \ref{prop: main boundary}, we know that there is a sequence $h_n\in P$ such that $h_n$ converges to $\xi$, {\em i.e.}  $(h_n|\xi)_{1_G}\to \infty$. 
Thus, for every $N>0$,  we can find $n(N),m(N)$ such that $(h_{n(N)}|g^{-1}_{m(N)})_{1}>N$.
Let $\alpha^{N}$ and $\beta^N$ be geodesic paths from $1_G$ to $h_{n(N)}$ and $g_{m(N)}^{-1}$ respectively.
By Proposition \ref{prop tool}, we can assume that $\beta^N(t)=g_t^{-1}$. 
By equation \eqref{eq: gromov product measures cancellation} in the beginning of Section \ref{sec: hyperbolic}, $\dist(\alpha^N(t),\beta^N(t))\leq 2\delta$ for all $t\in [0,N]$.

Denote by $p_n$ a path in $\cP$ from $1_G$ to $h_n$.
By the stability of quasi-geodesics, there is constant $D$ (only depending on $\delta$, $\lambda$ and $C$) such that $p_n$ and $\alpha_n$ are at Hausdorff distance at most $D$.
Suppose that $N> D+r+2\delta$.
Since $p_n$ is contained in the $r$-neighbourhood of $P$,  there is $z\in P$ such that $\dist(z,\alpha^N(N))\leq D+r$.
Since $\dist(\alpha^N(N), g_N^{-1})\leq 2\delta$, we have that $z\in \ball(g_N^{-1}, D+r+2\delta)\subseteq P^{-1}$, which is a contradiction.
\end{proof}

%
%
\section{Connectedness properties for positive cones}
\label{sec: prieto and hucha}
In this section we introduce two properties that essentially say that all positive cones have the same connectedness behaviour: either all positive cones are coarsely connected (Prieto property) or all positive cones are not coarsely connected (Hucha property).


\subsection{The Hucha property}
%


Let $\ga$ be a graph.
A subset $S$ of $V\ga$ {\it $r$-disconnects} subsets $H_1$ and $H_2$ of $\ga$ if any $r$-path from $H_1$ to $H_2$ has non-trivial intersection with $S$. We say that $S$ $r$-disconnects a subset $P$, if $P$ contains a subset $\{u,v\}$ so that $S$ $r$-disconnects $\{u\}$ and $\{v\}$.

\begin{defi}\label{defi:swamp}
Let $P$ be a positive cone of a group $G$, $H\leqslant G$ a subgroup and $\ga(G,X)$ a Cayley graph. 
A subset $S \subseteq P^{-1}$ is a {\it negative swamp of width $r$ for $H$} in $\ga(G,X)$ if: 
\begin{enumerate} 
\item $S$ $r$-disconnects $P$,
\item there are $g_1,g_2\in G$ such that $S$ $r$-disconnects $g_1H$ and $g_2H$.
\end{enumerate}
\end{defi}

\begin{rem}\label{rem: flexibility swamp}
If $S\subseteq P^{-1}$ is a negative swamp of with $r$ for $H$, then the same holds for any $S'$ with $S \subseteq S'\subseteq P^{-1}$. That is, $S'$ is also a negative swamp of width $r$ for $H$.

If $S$ is a negative swamp of with $r$ for $H$, then the same holds for any subgroup $K$ of $H$. That is $S$ is also a negative swamp of width $r$ for $K$.
\end{rem}

The Definition \ref{defi:swamp} is modeled after our proofs. We could just defined what it means for $P^{-1}$ to be a negative swamp. However, when we establish that $P^{-1}$ is a negative swamp, in many case we use a proper subset of $P^{-1}$ to show that it disconnects the appropriate sets. For example, we just used balls in the argument of Corollary \ref{cor: free groups} in the introduction. 

\begin{defi}\label{def: Hucha} 
Let $G$ be a left-orderable group, $X$ a finite generating set and $\mathcal{H}$ a family of subgroups. 
We say that the Cayley graph $\ga(G,X)$ is {\it Hucha with respect to $\mathcal{H}$} if for every positive cone $P$ of $G$, for every $H\in \mathcal{H}$ and for every $r>0$, there is is a negative swamp $S$ of width $r$ for $H$.

If $\mathcal{H}$ is the trivial subgroup we just say that $\ga(G,X)$ is Hucha.
\end{defi}

\begin{rem}\label{rem: family closed}
By Remark \ref{rem: flexibility swamp}, if $\ga(G,X)$ is Hucha with respect to $\cH$, then it is also Hucha with respect to $\{K\leqslant H \mid H\in \cH\}$. Thus  one might assume that the family $\mathcal{H}$ is closed under taking subgroups.
\end{rem}

By Remark \ref{rem: family closed}, if $\ga(G,X)$ is Hucha with respect to $\mathcal H$ then $\ga(G,X)$ is Hucha. 

The following is worth noticing.
\begin{prop}\label{prop: Hucha are not coarsely connected}
If $\ga(G,X)$ is Hucha, then no positive cone $P$ of $G$ is  coarsely connected.
\end{prop}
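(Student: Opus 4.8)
The plan is to argue by contradiction, exploiting the single quantifier mismatch between the two definitions: coarse connectivity asserts the \emph{existence} of one good scale $r$, while the Hucha property supplies a negative swamp at \emph{every} scale $r$. So suppose, for contradiction, that some positive cone $P$ of $G$ is coarsely connected. Then by definition there is a constant $r \geq 1$ such that any two vertices $u,v \in P$ are joined by an $r$-sequence that is supported entirely inside $P$.

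Next I would feed precisely this constant $r$ into the Hucha hypothesis. Since $\ga(G,X)$ is Hucha (with respect to the trivial subgroup), there exists a negative swamp $S$ of width $r$. Unpacking Definition \ref{defi:swamp}, this gives us $S \subseteq P^{-1}$ together with two elements $u,v \in P$ such that $S$ $r$-disconnects $\{u\}$ and $\{v\}$; that is, every $r$-path from $u$ to $v$ has non-empty intersection with $S$.

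The contradiction then comes from the basic partition property of positive cones. The $r$-sequence from $u$ to $v$ provided by coarse connectivity lies entirely in $P$, whereas $S$ is contained in $P^{-1}$, and $P \cap P^{-1} = \emptyset$ because $G = P \sqcup P^{-1} \sqcup \{1\}$. Hence this particular $r$-path avoids $S$, contradicting the fact that $S$ $r$-disconnects $u$ from $v$. Since $P$ was an arbitrary positive cone, no positive cone can be coarsely connected.

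I do not expect any genuine obstacle here: the statement is essentially a direct unwinding of the two definitions, and the only thing to be careful about is aligning the scale $r$ coming from coarse connectivity with the scale at which the Hucha property is invoked (this is harmless precisely because Hucha guarantees a swamp for \emph{all} $r$). The role of the trivial subgroup makes condition (2) of Definition \ref{defi:swamp} redundant for this argument, so only condition (1), that $S$ $r$-disconnects $P$, is used.
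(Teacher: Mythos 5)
Your proof is correct and takes essentially the same approach as the paper: the paper argues directly that for each $r$ the Hucha property yields a swamp $S\subseteq P^{-1}$ and elements $g_1,g_2\in P$ that cannot be joined by an $r$-path supported in $P$, while you frame the identical definition-unwinding as a contradiction against a fixed coarse-connectivity constant $r$. The key step in both is the same: an $r$-path inside $P$ is disjoint from $S\subseteq P^{-1}$ since $G=P\sqcup P^{-1}\sqcup\{1\}$, and your observation that condition (2) of Definition \ref{defi:swamp} is vacuous for the trivial subgroup matches the paper's usage.
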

\begin{proof}
Let $P$ be a positive cone and $r$ a positive integer. Since $\ga(G,X)$ is Hucha, there is a negative swamp $S\subseteq P^{-1}$ that $r$-disconnects $P$. In particular, there are $g_1,g_2\in P$ such that every $r$-path connecting them must go through $S$ and thus it can not be supported in $P$. Since $r$ is arbitrary we deduce that $P$ is not coarsely connected.
\end{proof}

The following is easy from the definitions. 
\begin{lem}\label{lem: Hucha is geometric}
Being Hucha with respect to $\mathcal H$ is independent of the generating set. 
\end{lem}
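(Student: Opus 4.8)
The plan is to use the elementary fact that any two word metrics on a finitely generated group are bi-Lipschitz equivalent via the identity map, and to track carefully how the width parameter $r$ transforms under this comparison.

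First I would fix two finite symmetric generating sets $X$ and $Y$ of $G$. Since each element of $X$ has finite $Y$-length and vice versa, there is a constant $L\geq 1$ with
$$\frac{1}{L}\,\dist_Y(u,v) \;\leq\; \dist_X(u,v) \;\leq\; L\,\dist_Y(u,v) \qquad \text{for all } u,v\in G.$$
The crucial consequence for our purposes is a comparison of paths: if $\{v_n\}$ is an $r$-path in $\ga(G,Y)$, i.e. $\dist_Y(v_i,v_{i+1})\leq r$, then $\dist_X(v_i,v_{i+1})\leq Lr$, so $\{v_n\}$ is an $(Lr)$-path in $\ga(G,X)$.

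Next, assume $\ga(G,X)$ is Hucha with respect to $\mathcal H$ and fix a positive cone $P$, a subgroup $H\in\mathcal H$, and a width $r>0$; I want to produce a negative swamp of width $r$ for $H$ in $\ga(G,Y)$. By hypothesis, applied to the width $Lr$, there is a set $S\subseteq P^{-1}$ that is a negative swamp of width $Lr$ for $H$ in $\ga(G,X)$; thus $S$ $(Lr)$-disconnects two elements $u,v\in P$ and two cosets $g_1H,g_2H$. I claim the very same set $S$ works in $\ga(G,Y)$ with width $r$. Indeed, the condition $S\subseteq P^{-1}$ is metric-independent, and by the path comparison above every $r$-path in $\ga(G,Y)$ joining $u$ to $v$ (resp. $g_1H$ to $g_2H$) is an $(Lr)$-path in $\ga(G,X)$, hence meets $S$. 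Therefore $S$ $r$-disconnects the same pairs in $\ga(G,Y)$, i.e. $S$ is a negative swamp of width $r$ for $H$. Since $P$, $H$, and $r$ were arbitrary, $\ga(G,Y)$ is Hucha with respect to $\mathcal H$, and by the symmetric roles of $X$ and $Y$ the two Cayley graphs enjoy the property simultaneously.

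There is no serious obstacle here: the argument is a direct bi-Lipschitz comparison. The only point requiring care is the bookkeeping of the width — one must disconnect the coarser $(Lr)$-paths in the $X$-metric in order to disconnect the $r$-paths in the $Y$-metric, which is precisely why the Hucha hypothesis is quantified over \emph{all} widths. Since $L\geq 1$, a larger width is a stronger disconnection requirement, so passing from width $Lr$ down to width $r$ is harmless and the witnessing pairs $u,v$ and $g_1,g_2$ transfer unchanged.
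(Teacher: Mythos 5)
Your proof is correct: the paper states this lemma without proof (``easy from the definitions''), and your bi-Lipschitz comparison between the two word metrics, requesting a negative swamp of width $Lr$ in $\ga(G,X)$ to obtain one of width $r$ in $\ga(G,Y)$, is precisely the standard argument being left to the reader. You also correctly isolated the one point of substance, namely that the Hucha hypothesis is quantified over all widths, so the rescaling of the width parameter is harmless and the witnessing elements and cosets transfer unchanged.
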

Therefore, we will say that a group $G$ {\it is Hucha with respect to $\mathcal H$} if some (any) Cayley graph is Hucha with respect to $\cH$.
By Remark \ref{rem: fg pos cone are coarsely connected} we also have the following:
\begin{cor}
Hucha groups do not have finitely generated positive cones.
\end{cor}

Although we will not need this, it is worth recording it.
 The proof is easy but will exemplify some of the ideas used later.
\begin{lem}\label{lem: Hucha is preserved by overgroups}
Let $G$ be a finitely generated left-orderable group, $H$ a finite index subgroup such that $H$ is Hucha with respect to a family $\cH$ of subgroups of $H$. Then $G$ is Hucha with respect to the family $\cH$.
\end{lem}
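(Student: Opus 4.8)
The plan is to reduce the Hucha property of $G$ to that of $H$ by pulling back negative swamps from $H$ to $G$, exploiting that $H$ has finite index so that $G$ and $H$ are quasi-isometric and positive cones restrict nicely. First I would fix a positive cone $P$ of $G$, a subgroup $K\in\cH$ and a width $r>0$, and consider the restriction $P_H=P\cap H$. One checks easily that $P_H$ is a positive cone of $H$: it is a sub-semigroup, and since $H=(P\cap H)\sqcup(P^{-1}\cap H)\sqcup\{1\}$ (because $H$ is a subgroup closed under inverses and $G=P\sqcup P^{-1}\sqcup\{1\}$), we get $H=P_H\sqcup P_H^{-1}\sqcup\{1\}$. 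So the order $\prec$ on $G$ restricts to a left-order on $H$ with positive cone $P_H$.

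Next I would pick finite symmetric generating sets: by Lemma \ref{lem: Hucha is geometric} the property is independent of the generating set, so I may choose a convenient one. Fix a finite symmetric generating set $Y$ of $H$, and enlarge a transversal of $H$ in $G$ to obtain a finite symmetric generating set $X$ of $G$ containing $Y$; since $[G:H]<\infty$, the inclusion $\ga(H,Y)\hookrightarrow \ga(G,X)$ is a quasi-isometric embedding, and there is a constant $L$ such that $\dist_X\leq \dist_Y\leq L\,\dist_X$ on $H$, and every element of $G$ lies within bounded distance of $H$. The key point is to transfer the disconnection width: given the target width $r$ in $\ga(G,X)$, I would produce a possibly larger width $r'$ in $\ga(H,Y)$ so that an $r'$-path in $H$ disconnected by a swamp corresponds to $r$-paths in $G$ being disconnected. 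Since $H$ is Hucha with respect to $\cH$, there is a negative swamp $S\subseteq P_H^{-1}\subseteq P^{-1}$ of width $r'$ for $K$ in $\ga(H,Y)$, which $r'$-disconnects some $u,v\in P_H$ and disconnects cosets $h_1K,h_2K$.

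The main obstacle will be the final step: verifying that $S$, viewed inside $\ga(G,X)$, is still a negative swamp of width $r$ for $K$. The difficulty is that an $r$-path in $\ga(G,X)$ may leave $H$ entirely and travel through the finitely many cosets, so one cannot directly invoke the $r'$-disconnection in $H$. The plan is to show that any $r$-path in $\ga(G,X)$ between two points of $H$ can be ``pushed back'' to an $r'$-path in $\ga(H,Y)$ with the same endpoints that meets $S$ whenever the original does, for a suitable choice of $r'=r'(r,L,[G:H])$; concretely, each step of length $\leq r$ in $G$ is replaced by a bounded detour staying within $H$ at the cost of enlarging the width by the quasi-isometry constant. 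Since $S\subseteq P^{-1}$ automatically, condition (1) (that $S$ $r$-disconnects $P$, via $u,v$) and condition (2) (that $S$ $r$-disconnects $h_1K$ and $h_2K$) then follow. As $r$ was arbitrary, $\ga(G,X)$ is Hucha with respect to $\cH$, and by Lemma \ref{lem: Hucha is geometric} so is $G$.
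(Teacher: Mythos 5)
There is a genuine gap at the step you yourself flag as the ``main obstacle,'' and it is not repairable with your choice $P_H=P\cap H$. Your pushing argument shows the following: given an $r$-path $\{g_i\}$ in $\ga(G,X)$ between the designated endpoints, the nearby points $h_i\in H$ with $\dist_X(g_i,h_i)\leq R$ form an $r'$-path in $\ga(H,Y)$, which must meet the swamp $S$. But this only tells you that the \emph{original} path meets the $R$-neighbourhood $S\ball_X(1,R)$ in $G$, not $S$ itself. Indeed $S$ alone cannot $r$-disconnect anything in $\ga(G,X)$: since $S\subseteq H$, a path in $G$ can sidestep $S$ entirely by detouring through another coset of $H$, staying within distance $R$ of $S$ without ever touching it. So the set that actually disconnects in $G$ is the fattened set $S\ball_X(1,R)$ --- and here your approach breaks down, because $S\subseteq P_H^{-1}\subseteq P^{-1}$ gives no control whatsoever over $S\ball_X(1,R)$, which in general contains positive elements. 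The requirement of Definition \ref{defi:swamp} that the swamp be contained in $P^{-1}$ therefore fails for the only set your argument can show to be disconnecting, and your closing claim that conditions (1) and (2) ``then follow'' since $S\subseteq P^{-1}$ does not go through.

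The paper's proof resolves exactly this point with a conjugation trick that is absent from your plan. Choose $R$ with $H\ball(1,R)=G$ and set $t^{-1}=\max_\prec\ball(1,R)$, so that $t\ball(1,R)\subseteq P^{-1}\cup\{1\}$: the element $t$ is so deeply negative that its entire $R$-ball is non-positive. One then applies the Hucha hypothesis of $H$ not to $P\cap H$ but to the conjugated cone $P'=t^{-1}Pt\cap H$ (legitimate, since the Hucha property quantifies over \emph{all} positive cones of $H$), obtaining a swamp $S'\subseteq (P')^{-1}$ of suitable width $r'$ (chosen, as in your plan, so that $|h|_Y\leq r'$ whenever $h\in H$ satisfies $|h|_X\leq 2R+r$). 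The candidate swamp in $G$ is the fattened translate $tS'\ball(1,R)$: for $s'\in S'$ and $b\in\ball(1,R)$ one writes $ts'b=(ts't^{-1})(tb)$, a product of an element of $P^{-1}$ with an element of $P^{-1}\cup\{1\}$, whence $tS'\ball(1,R)\subseteq P^{-1}$. Your path-pushing argument then applies verbatim to show that $tS'\ball(1,R)$ $r$-disconnects $ta_1K$ from $ta_2K$, and likewise the positive elements $ta_1t^{-1},ta_2t^{-1}\in P$. Without conjugating by such a $t$, there is no mechanism in your proposal to keep the necessarily fattened swamp inside the negative cone.
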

\begin{proof}
Fix a finite generating set $X$ of $G$. 
By Lemma \ref{lem: Hucha is geometric}, we can assume that $Y=X\cap H$ generates $H$.
Thus $\ga(H,Y)$ is a subgraph of $\ga(G,X)$.
Let $R$ such that $H\ball(1,R)=G$.
Fix a positive cone $P$ of $G$ with corresponding order $\prec$.
Let $t^{-1}=\max_\prec\ball(1,R)$.
Thus $t\ball(1,R)\subseteq P^{-1}\cup \{1\}$.

Let $r>0$ and $K\in \cH$. We need to find a negative swamp $S\subseteq P^{-1}$ for $K$ of width $r$.
Notice that $P'=t^{-1}Pt\cap H$ is a positive cone for $H$.
By hypothesis, for every $r'>0$ there exists $S'\subseteq (P')^{-1}$ and $a_1,a_2\in H$ such that $S'$ $r'$-disconnects $a_1K$ and $a_2K$ in $\ga(H,Y)$.
We take $r'$ satisfying that $|h|_Y\leq r'$ for every $h\in H$ with $|h|_X\leq 2R+r$.

Note that $S'\subseteq t^{-1} P^{-1} t$. Thus $tS't^{-1}\subseteq P^{-1}$ and therefore $tS'\ball(1,R)\subseteq P^{-1}$.
We claim that $tS'\ball(1,R)$ is a negative swamp of width $r$ for $K$.
Let $\{g_i\}_{i=0}^m$ be an $r$-sequence in $G$ with $g_0\in ta_1K$ and $g_m\in ta_2K$.
Let $h_0,h_m\in H$ such that $th_0=g_0$, $th_m=g_m$ and  for $0<i<m$, let $h_i\in H$ such that $\dist(g_i,th_i)\leq R$.
Then $\{h_i\}_{i=0}^m$ is an $r'$-sequence in $\ga(H,Y)$ from $a_1K$ to $a_2K$ and hence there is some $i$ such that $h_i\in S'$. 
Thus $th_i\in tS'$ and $g_i\in th_i\ball(1,R)$.

It remains to show that $tS'\ball(1,R)$ $r$-disconnects $P$.
By hypothesis, there are $a_1,a_2\in P'$ such that $S'$ $r'$-disconnects them.
Note that $ta_it^{-1}\in P$.
Let $\{g_i\}_{i=0}^m$ be an $r$-sequence from $ta_1t^{-1}$ to $ta_2t^{-1}$.
Let $h_0=a_1$, $h_m=a_2$, and for $1<i<m$ let $h_i\in H$ such that $\dist(g_i,th_i)\leq R$.
Arguing as before, since the $r'$-sequence $\{h_i\}_{i=0}^m$ meets $S'$, we conclude that $\{g_i\}_{i=0}^m$ meets $tS'\ball(1,R)$.
\end{proof}

We do not know if the Hucha property passes to finite index subgroups since it is not always the case that a left-order on a finite index subgroup can be extended to a left-order on the ambient group.
We remark that in all the groups that we show that are Hucha, the property is inherited by its finite index subgroups.

However, we know that the Hucha property is not preserved under direct products.
\begin{ex}\label{ex F2xF2}
The free group of rank 2, $F_2$ has the Hucha property with respect to the trivial group. This follows form Corollary \ref{cor: free groups}. 
It is known that the BNS-invariant $\Sigma^1(F_2\times F_2)$ is non-empty. 
Concretely, the map that sends each standard generator of $F_2\times F_2$ to $1\in\mathbb{Z}$ has finitely generated kernel. Therefore, by Proposition \ref{prop:BNS}, $F_2\times F_2$ has a coarsely connected positive cone.
\end{ex} 

\subsection{The Prieto property}\label{sec: Prieto}
We introduce a strong negation of the Hucha property.

\begin{defi}
Let $G$ be a left-orderable group and $X$ a finite generating set of $G$.
We say that $\ga(G,X)$ {\it is Prieto} if every positive cone $P$ of $G$ is coarsely connected.
\end{defi}

It is easy to see that Prieto is a geometric property of the group, and thus if a Cayley graph of $G$ is Prieto, then every finitely generated Cayley graph of $G$ is Prieto. We record this fact in the following lemma.

\begin{lem}\label{lem: Prieto is geometric}
Being Prieto is independent of the finite generating set.
\end{lem}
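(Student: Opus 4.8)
The plan is to show that coarse connectivity of a positive cone is preserved under changing between two finite generating sets, since this is exactly what Lemma~\ref{lem: Prieto is geometric} asserts (being Prieto means \emph{every} positive cone is coarsely connected, and we must check the quantifier "every positive cone is coarsely connected" is insensitive to the choice of $X$). Fix a left-orderable group $G$ and two finite symmetric generating sets $X$ and $Y$. First I would recall the standard fact that the identity map $(G,\dist_X)\to(G,\dist_Y)$ is a bi-Lipschitz equivalence: setting $\mu=\max_{x\in X}|x|_Y$ and $\nu=\max_{y\in Y}|y|_X$, one has $\tfrac{1}{\nu}\dist_Y(g,h)\le \dist_X(g,h)\le \mu\,\dist_Y(g,h)$ for all $g,h\in G$. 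This is immediate by expressing a geodesic word in one alphabet in terms of the other.

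The key observation is that the \emph{same} positive cone $P$ is being tested in both Cayley graphs, so the only thing changing is the metric, not the subset. I would therefore argue that coarse connectivity is invariant under bi-Lipschitz change of metric. Suppose $P$ is coarsely connected in $\ga(G,X)$, so there is $r\ge 1$ such that any two points of $P$ are joined by an $r$-path (in the $\dist_X$ sense) supported in $P$. Given $u,v\in P$, take such an $r$-path $u=v_0,v_1,\dots,v_k=v$ with $v_i\in P$ and $\dist_X(v_i,v_{i+1})\le r$. By the bi-Lipschitz inequality, $\dist_Y(v_i,v_{i+1})\le \nu\,\dist_X(v_i,v_{i+1})\le \nu r$. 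Hence the very same sequence $\{v_i\}$ is an $(\nu r)$-path in $\ga(G,Y)$ supported in $P$ joining $u$ to $v$. Therefore $P$ is coarsely connected in $\ga(G,Y)$ as well, with width constant $\nu r$. Swapping the roles of $X$ and $Y$ gives the converse implication.

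Putting these together: $P$ is coarsely connected with respect to $X$ if and only if it is coarsely connected with respect to $Y$. Since this equivalence holds for an arbitrary positive cone $P$, the statement "every positive cone of $G$ is coarsely connected" holds for $\ga(G,X)$ if and only if it holds for $\ga(G,Y)$, which is precisely the claim that being Prieto is independent of the finite generating set.

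There is really no serious obstacle here; the proof is a direct unwinding of definitions combined with the elementary bi-Lipschitz comparison of word metrics. The only point deserving a moment of care is that the \emph{width} constant of coarse connectivity changes (from $r$ to $\nu r$), but since the definition of coarse connectivity only requires the \emph{existence} of some finite width, this rescaling is harmless. I would also note that the same bi-Lipschitz argument underlies Lemma~\ref{lem: Hucha is geometric}, so one could remark that Prieto being geometric is the natural dual statement to the geometric invariance of the Hucha property already recorded above.
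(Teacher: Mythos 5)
Your proof is correct and is precisely the standard bi-Lipschitz argument that the paper leaves implicit (the paper records this lemma without proof, remarking only that it is easy to see). One cosmetic slip worth fixing: with your definitions $\mu=\max_{x\in X}|x|_Y$ and $\nu=\max_{y\in Y}|y|_X$, the correct comparison is $\tfrac{1}{\mu}\dist_Y(g,h)\le \dist_X(g,h)\le \nu\,\dist_Y(g,h)$, so in your key estimate the bound should read $\dist_Y(v_i,v_{i+1})\le \mu\,\dist_X(v_i,v_{i+1})\le \mu r$ --- a relabeling of constants that does not affect the argument.
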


We say that $G$ {\it is Prieto} if there is some finite generating set $Y$ of $G$ such that $\ga(G,Y)$ is Prieto. The following is the analogous to Lemma \ref{lem: Hucha is preserved by overgroups} in the Prieto case. We omit the proof, since it is straightforward.

\begin{lem}
Let $G$ be a finitely generated left-orderable group, and $H$ a finite index Prieto subgroup.
Then $G$ is Prieto.
\end{lem}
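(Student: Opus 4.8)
The plan is to mimic the proof of Lemma \ref{lem: Hucha is preserved by overgroups}, using the same ``push-pull via a fixed conjugator $t$'' technique but now running the coarse-connectivity argument in the forward direction. The key point is that Prieto is a statement about \emph{all} positive cones of $G$, so I must start from an arbitrary positive cone $P$ of $G$ and produce, from the assumption that $H$ is Prieto, a single coarseness constant $r$ witnessing that $P$ is coarsely connected.

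First I would fix a finite generating set $X$ of $G$ and, by Lemma \ref{lem: Prieto is geometric}, arrange that $Y=X\cap H$ generates $H$, so that $\ga(H,Y)$ embeds as a subgraph of $\ga(G,X)$. Let $R$ be such that $H\ball(1,R)=G$; this is finite since $H$ has finite index. Given an arbitrary positive cone $P$ of $G$ with order $\prec$, I set $t^{-1}=\max_\prec \ball(1,R)$, so that $t\ball(1,R)\subseteq P^{-1}\cup\{1\}$, exactly as in the overgroup lemma. The crucial algebraic observation is that $P'=t^{-1}Pt\cap H$ is a positive cone of $H$ (conjugation sends positive cones to positive cones, and intersecting with the subgroup $H$ gives a positive cone of $H$; here $t^{-1}Pt$ is a positive cone because $P$ is and conjugation is an automorphism). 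Since $H$ is Prieto, there is some $r'$ such that any two elements of $P'$ are joined by an $r'$-path supported in $P'$ inside $\ga(H,Y)$.

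Next I would transport this connectivity back up to $G$. Choose $r$ large enough that every $h\in H$ with $|h|_Y\le r'$ satisfies $|h|_X\le r$, and also large enough to absorb the bounded correction coming from $R$ and from the conjugator $t$. Given two elements $p_1,p_2\in P$, I first move them into the shape $t a_i t^{-1}$ with $a_i\in P'$: more precisely, one shows that after a bounded adjustment any element of $P$ lies within bounded distance of a point of the form $t a t^{-1}$ with $a\in P'$, using that $t\ball(1,R)\subseteq P^{-1}\cup\{1\}$ controls the coset structure. An $r'$-path in $P'$ from $a_1$ to $a_2$ then conjugates to a path $t a_i t^{-1}$ whose steps have $X$-length bounded in terms of $r'$, $|t|_X$ and $R$, and whose vertices lie in $t P' t^{-1}\subseteq P$. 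Padding by the bounded pieces connecting $p_i$ to $t a_i t^{-1}$ yields a single $r$-path from $p_1$ to $p_2$ supported in $P$, which is exactly coarse connectivity of $P$ with constant $r$ independent of the chosen pair.

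The main obstacle I anticipate is not the path-transport bookkeeping (that is routine and parallels the Hucha overgroup lemma) but rather cleanly verifying that $P'=t^{-1}Pt\cap H$ is genuinely a positive cone of $H$ and that every element of $P$ can be represented, up to bounded error, by a conjugate $t a t^{-1}$ with $a\in P'$. Concretely, the semigroup and trichotomy axioms for $P'$ follow because $t^{-1}Pt$ satisfies them in $G$ and $H$ is a subgroup, so $G=P\sqcup P^{-1}\sqcup\{1\}$ restricts to $H=P'\sqcup (P')^{-1}\sqcup\{1\}$ after conjugation; this is the one place where torsion-freeness and the left-order structure must be handled with care. Once this is in hand, the uniformity of the constant $r$ is automatic because $r$ depends only on $r'$ (from $H$ being Prieto), on $R$, and on $|t|_X\le R$, all of which are bounded independently of the particular positive cone.
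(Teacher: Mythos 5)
Your proposal is correct and is exactly the argument the paper has in mind: the paper omits this proof as straightforward, pointing to its parallel with Lemma \ref{lem: Hucha is preserved by overgroups}, and you carry out precisely that ``push-pull via $t$'' analogue, with the correct verification that $P'=t^{-1}Pt\cap H$ is a positive cone of $H$ and a valid transport of the $r'$-path $\{z_i\}\subseteq P'$ to $\{tz_it^{-1}\}\subseteq tP't^{-1}\subseteq P$ with steps bounded by $r'+2R$ (using $|t|_X\leq R$ and $Y\subseteq X$). The one step you assert rather than prove --- that every $p\in P$ lies within bounded distance of $tP't^{-1}=P\cap tHt^{-1}$ --- does go through by the mechanism you name: writing $t^{-1}p=hb$ with $h\in H$ and $b\in\ball(1,R)$ gives $tht^{-1}=pb^{-1}t^{-1}\in P$, since $b^{-1}t^{-1}\in\ball(1,R)t^{-1}\subseteq P\cup\{1\}$ and $P$ is a semigroup, and $\dist_X(p,tht^{-1})=|b^{-1}t^{-1}|_X\leq 2R$, so a single $r$-jump connects $p$ to the target set and the constant $r=r'+2R$ is uniform over pairs, as you claim.
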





The easiest example of a Prieto groups are finitely generated torsion free-abelian groups. 

\begin{prop}\label{prop: abelian groups are Prieto}
Finitely generated free-abelian groups are Prieto.
\end{prop}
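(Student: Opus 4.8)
```latex
The plan is to prove that a finitely generated free-abelian group $G=\Z^n$ is Prieto by showing directly that every positive cone $P$ is coarsely connected. Fix the standard generating set $X=\{\pm e_1,\dots,\pm e_n\}$, so that $\ga(G,X)$ is the standard integer lattice inside $\R^n$. By Lemma \ref{lem: Prieto is geometric} it suffices to work with this one generating set. The key geometric fact I would exploit is the one alluded to in the introduction: a positive cone of $\Z^n$ is essentially a half-space through the origin. More precisely, the associated order $\prec$ extends to a total order on $\R^n$, and there is a linear functional whose sign detects $\prec$ on a large region. The cleanest way to formalize this is to pass to the real vector space: the order $\prec$ on $\Z^n$ induces a decomposition of $\R^n$ by a flag of hyperplanes (the classical Hahn-type description of orders on $\Z^n$), but for coarse connectivity I only need the leading functional.

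The main step is to produce, for each positive cone $P$, a single linear functional $\phi\colon\R^n\to\R$ with $\phi(\Z^n)\neq 0$ such that all lattice points with $\phi>0$ lie in $P$ once they are far enough from the boundary hyperplane $\ker\phi$, and dually all points with $\phi<0$ and far from $\ker\phi$ lie in $P^{-1}$. I would obtain $\phi$ as follows: consider the values $\phi$ could take as a ``direction of growth'' of the order. Concretely, define $\phi$ so that its positive half-space approximates $P$; the existence of such a $\phi$ is exactly the statement that the convex hull of $P$ in $\R^n$ and the convex hull of $P^{-1}$ are separated by a hyperplane through the origin, which follows because $P$ is a sub-semigroup closed under the group operation and $P,\,P^{-1},\,\{0\}$ partition the lattice. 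Once $\phi$ is in hand, the set $\{x\in P\mid \phi(x)\geq 1\}$ is coarsely connected: any two such lattice points can be joined by a monotone lattice path that never decreases $\phi$ and stays in the open half-space, hence stays in $P$ up to bounded error, and there is a uniform $r$ (depending only on $n$ and on the gaps in the flag) controlling the steps near $\ker\phi$.

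The subtle point, and the one I expect to be the main obstacle, is the behaviour of $P$ on the boundary hyperplane $\ker\phi\cap\Z^n$. On this sublattice the functional $\phi$ gives no information, and the order $\prec$ restricts to a positive cone of the lower-rank group $\ker\phi\cap\Z^n\cong\Z^{n-1}$. I would handle this by induction on the rank $n$: the base case $n=1$ is immediate since the only positive cones of $\Z$ are $\N$ and $-\N$, both coarsely connected. For the inductive step, the restricted order on the boundary sublattice is coarsely connected by hypothesis, and I glue this connectivity to the bulk half-space connectivity by moving a bounded distance in the $\phi$-positive direction (which is always available inside $P$ since $P$ is a semigroup containing a generator with $\phi>0$). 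The care required is purely in checking that the neighbourhood radius $r$ can be chosen uniformly over all of $P$, independent of how close a point is to $\ker\phi$; this reduces to a finite computation once the functional $\phi$ and the inductive constant are fixed.

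Putting these together yields a uniform $r$ such that the $r$-neighbourhood of $P$ is connected, which is precisely coarse connectivity, so $\ga(\Z^n,X)$ is Prieto and hence $\Z^n$ is Prieto by Lemma \ref{lem: Prieto is geometric}.
```
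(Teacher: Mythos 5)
Your proposal is correct in substance, but it is a more self-contained route than the paper's. The paper's proof is two lines: it invokes the known classification of positive cones of $\Z^n$ (citing \cite[Section 1.2.1]{GOD}) --- every positive cone consists of the lattice points strictly on one side of a hyperplane $\pi$ through the origin, together with possibly some lattice points on $\pi$ --- and then reads off connectivity of $P$ in the lattice. You instead reprove the leading-functional part of that classification, and your separation claim is indeed provable exactly along the line you indicate: if $\mathrm{conv}(P)\cap\mathrm{conv}(P^{-1})\neq\emptyset$, then since the data are rational one extracts a nontrivial integral relation $\sum m_i p_i=0$ with $p_i\in P$, contradicting the semigroup property and $0\notin P$; and since $P^{-1}=-P$, a separating hyperplane can be taken through the origin. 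Two remarks on the rest of your plan. First, your hedge that only points ``far enough'' from $\ker\phi$ lie in $P$ undersells your own separation step: once $\phi\geq 0$ on $P$ and $\phi\leq 0$ on $P^{-1}$, \emph{every} lattice point with $\phi>0$ lies in $P$ (it cannot lie in $P^{-1}$). Consequently your induction on rank is unnecessary: each point of $P\cap\ker\phi$ is at distance $1$ from the set of lattice points with $\phi>0$ (add a generator $x$ with $\phi(x)>0$), and that open half-space set is connected by the standard staircase argument (go up from both endpoints by multiples of $x$ and connect across the top --- note that, as literally stated, a single path ``never decreasing $\phi$'' cannot reach a target with smaller $\phi$-value), so $P$ is coarsely connected with a uniform small constant. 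Second, a minor inaccuracy: $\ker\phi\cap\Z^n$ need not be isomorphic to $\Z^{n-1}$; for irrational $\phi$ its rank can be anything $\leq n-1$, even zero. Your induction would survive this (the rank strictly drops, and the sublattice is undistorted in $\Z^n$), but as noted it can be discarded altogether. In short: same geometric picture as the paper (half-space through the origin), but where the paper cites the classification and asserts connectivity, you prove the key separation from scratch and detail the connectivity, at the cost of some avoidable structure.
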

\begin{proof}
We view the Cayley graph $\ga$ of $\Z^n$ with respect to a basis as a lattice in $\mathbb{R}^n$. 
It is well known (see \cite[Section 1.2.1]{GOD} and references therein) that for every positive cone $P$ of $\Z^n$ there exists a hyperplane $\pi$ of $\mathbb{R}^n$ passing through the origin, such that $P$ consists on the lattice points in some connected component of $\mathbb{R}^n \setminus \pi$, and perhaps some points on $\pi$ (if $\pi$ contains lattice points).
Therefore $P$ is a connected subset of $\ga$.
\end{proof}

To exhibit more involved examples, recall that given $\prec$ a left-invariant order on a group $G$. A subgroup $H$ of $G$ is {\it cofinal} if for every $g\in G$, there are $h_1,h_2$ such that $h_1\prec g\prec h_2$.

\begin{lem}\label{lem: cofinal center makes P connected}
Let $(G,\prec)$ be a finitely generated left-ordered group. 
Suppose that there is a finitely generated, cofinal and central subgroup. 
Then $P_\prec=\{g\in G \mid 1\prec g\}$ is coarsely connected.
\end{lem}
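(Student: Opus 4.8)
The plan is to reduce the connectivity of $P_\prec$ to that of the positive cone (and its super-level sets) of the central subgroup $Z$, and then to stitch together the cosets of $Z$ in $G$ using centrality and cofinality. I will assume $Z\neq\{1\}$: if $Z$ is trivial, cofinality forces $G$ trivial and $P_\prec=\emptyset$, which is vacuously coarsely connected.

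First I would record the structure of $Z$ and set up the abelian input. Since $G$ is left-orderable it is torsion-free, hence so is $Z$, and being finitely generated abelian we get $Z\cong\Z^k$. I may enlarge $X$ so that a finite symmetric generating set $S$ of $Z$ is contained in $X$; then $\ga(Z,S)$ is a subgraph of $\ga(G,X)$ and the restriction of $\prec$ to $Z$ is a total order whose positive cone is $P_\prec\cap Z$. The key input is a \emph{uniform} version of Proposition \ref{prop: abelian groups are Prieto}: for every $w\in Z$ the super-level set $U_w=\{z\in Z\mid w\prec z\}$ is coarsely connected in $\ga(Z,S)$ with a constant $r_Z$ \emph{independent of} $w$. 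Indeed, exactly as in the proof of Proposition \ref{prop: abelian groups are Prieto}, $U_w$ consists of the lattice points lying in an open half-space of $\R^k$ parallel to the fixed hyperplane $\pi$ associated to $\prec$, together with possibly some points of the bounding hyperplane; all such parallel half-space lattices are coarsely connected with a common constant, and the boundary points attach within bounded distance. Since $S\subseteq X$, the same $r_Z$ works in $\ga(G,X)$.

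Next I would exploit centrality. For $g\in G$ the coset $gZ=Zg$ satisfies $gZ\cap P_\prec=\{gz\mid z\in Z,\ g^{-1}\prec z\}=g\,U_{g^{-1}}$, a left-translate of $U_{g^{-1}}$; as left-translation is an isometry of $\ga(G,X)$, this set is coarsely connected with the uniform constant $r_Z$. Moreover adjacent cosets can be bridged by a single edge staying in $P_\prec$: given $x\in X$, cofinality of $Z$ yields $z\in Z$ with $g^{-1}\prec z$ and $(gx)^{-1}\prec z$ (take $z$ above $\max_\prec\{g^{-1},(gx)^{-1}\}$); then $a=gz$ and $b=gxz$ both lie in $P_\prec$, and since $z$ is central $b=gzx=ax$, so $a$ and $b$ are joined by one edge with $a\in gZ\cap P_\prec$ and $b\in gxZ\cap P_\prec$.

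Finally I would assemble the path. Given $g,g'\in P_\prec$, write $g^{-1}g'=x_1\cdots x_m$ with $x_i\in X$, giving cosets $g_0Z,\dots,g_mZ$ (with $g_0=g$, $g_{i+1}=g_ix_i$). For each $i$ the bridging step provides $a_i=g_iz_i\in g_iZ\cap P_\prec$ and $b_i=a_ix_i\in g_{i+1}Z\cap P_\prec$ at distance one; inside each coset-intersection $g_iZ\cap P_\prec$, which is $r_Z$-connected, I join the incoming point $b_{i-1}$ to the outgoing point $a_i$, and at the ends join $g$ to $a_0$ and $b_{m-1}$ to $g'$. Concatenating these $r_Z$-paths with the single bridging edges yields an $r_Z$-path from $g$ to $g'$ entirely contained in $P_\prec$, so $P_\prec$ is coarsely connected. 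The crux of the argument — and the step I expect to require the most care — is the uniformity of $r_Z$ over all super-level sets $U_w$ (equivalently, over all cosets): this is what legitimizes concatenating arbitrarily many coset-internal paths into a single $r_Z$-path, and it follows from the explicit half-space description of orders on $\Z^k$, not merely from coarse connectedness of a single positive cone.
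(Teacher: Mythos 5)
Your proof is correct, but it takes a genuinely different route from the paper's. The paper's argument is a one-shot translation trick: given two positive elements, it takes an \emph{arbitrary} $1$-path between them in $\ga(G,X)$, uses cofinality to pick a single central element $z\prec 1$ lying below every vertex of that path, left-translates the entire path by $z^{-1}$ to push it into $P_\prec$, and then repairs the two endpoints with one $r$-path inside $P_\prec\cap Z$ supplied by Proposition \ref{prop: abelian groups are Prieto}; centrality is what lets the endpoint corrections be glued on both sides (the segments $g_0z_i$ and $z_ig_m=g_mz_i$). You instead fiber $P_\prec$ over the cosets of $Z$: you show each slice $gZ\cap P_\prec=g\,U_{g^{-1}}$ is coarsely connected with a constant uniform in $g$, and bridge consecutive cosets along a word for $g^{-1}g'$ using cofinality plus centrality. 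The trade-off is that the paper needs only coarse connectedness of the single set $P_\prec\cap Z$ (the Prieto property as a black box), whereas you need the uniform statement over all super-level sets $U_w$ --- which you correctly flag as the crux and derive from the half-space description of orders on $\Z^k$. It is worth noting that this uniformity is in fact free and needs no half-space geometry: if $U\subseteq Z$ is any up-set of $\prec$ restricted to $Z$ and $u\preceq u'$ both lie in $U$, then $u'\in u\bigl((P_\prec\cap Z)\cup\{1\}\bigr)\subseteq U$, and this translate is isometric to $(P_\prec\cap Z)\cup\{1\}$, so the single constant $r_Z$ for $P_\prec\cap Z$ serves for every $U_w$ simultaneously --- including $w=g^{-1}\in G\setminus Z$, where your initial definition of $U_w$ for $w\in Z$ needs the (harmless) extension to arbitrary $w\in G$, with cofinality guaranteeing these up-sets are proper and nonempty. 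With that simplification your argument no longer uses abelianness of $Z$ beyond connectivity of its cone, and it yields a slightly stronger structural picture than the paper's proof: uniformly connected slices joined by uniformly short bridges.
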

\begin{proof}
Fix a finite generating set $X$ of $G$.
Let $g_0,g_m\in P_\prec$ and $\{g_i\}_{i=0}^m$ be a $1$-sequence connecting them.
Let  $Z$ be a finitely generated, cofinal and central subgroup of $G$. 
Since $Z$ is cofinal, there is $z\in Z$ such that $z\prec g_i$ for $0<i<m$.
Without loss of generality, assume $z\prec 1$.
Then $\{z^{-1}g_{i}\}_{i=0}^m$ is a $1$-sequence contained in $P$.
Since $Z$ if finitely generated, it is free-abelian and hence Prieto.
Thus there is some $r>0$, such that $P_\prec \cap Z$ (with the induced metric) is $r$-connected.
Thus there is an $r$-path $\{z_i\}_{i=0}^k$ supported in $P_\prec$ from $1$ to $z^{-1}$.
Now the concatenation $\{g_0 z_i\}_{i=0}^{k}$ with $\{z^{-1} g_i\}_{i=0}^m$ followed by $\{ z_ig_m\}_{i=k}^0$ is an $r$-sequence supported in $P_\prec$ from $g_0$ to $g_m$.
\end{proof}

\begin{rem}\label{rem: F_2x Z}
The group  $F_2\times \Z$ is not a Hucha group neither a Prieto group.
This will follow by considering different lexicographic orders. For showing that is not Hucha, we take the order on $F_2$ as the leading order in the lexicographic order. For showing that it is not Prieto, we consider the order on $\Z$ as the leading one.

 Indeed, from Corollary \ref{cor: free groups} we know that positive cones of $F_2$ are not coarsely connected, so, if we fix $P$ a positive cone in $F_2$, it is easy to check that 
$P'= (P\times \Z) \cup (\{1\}\times \mathbb{Z}_{\geq 1})\subseteq F_2\times \Z$ is positive cone for $F_2\times \Z$ which not coarsely connected either.

If instead one takes $P'=(F_2\times \Z_{\geq 1})\cup (P\times \{0\})$ this is a positive cone for $F_2\times \Z$ on which $\Z$ is co-final, and by Lemma \ref{lem: cofinal center makes P connected}, $P'$ is coarsely connected.
It is worth noticing that not only $F_2\times \Z$ has coarsely connected positive cones, but also it has finitely generated ones. This was recently showed by  H. L. Su \cite{Su}. 
\end{rem}

The braid group $B_n$ can be defined as the mapping class group of a punctured disk with $n$ punctures. E. Artin gave a presentation for $B_n$
$$B_n=\langle \sigma_1,\dots, \sigma_{n-1}\, \mid  (\sigma_i \sigma_{i+1} \sigma_{i} = \sigma_{i+1} \sigma_i \sigma_{i+1}\,: \, 1\leq i \leq n-2), (\sigma_i \sigma_j= \sigma_j \sigma_i \,:\, |i-j|>1)\rangle. $$

Iterated Torus knot groups are groups with presentation
$$T_{n_1,\dots,n_k} =\langle a_1,\dots, a_k \mid a_1^{n_1}=a_2^{n_2}=a_3^{n_3}=\dots =a_k^{n_k}\rangle.$$

\begin{cor}\label{cor: Braids are Prieto}
Braid groups, iterated Torus knot groups  and $\widetilde{T}=\langle a,b,c\mid a^2=b^3=c^7=abc\rangle $ are Prieto groups.

\end{cor}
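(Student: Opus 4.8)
The plan is to realize each of these groups as a central extension of a Prieto base, and then apply Lemma~\ref{lem: cofinal center makes P connected}. The key structural fact is that braid groups, iterated torus knot groups, and $\widetilde{T}$ all have an infinite cyclic center $Z$ whose quotient is a group that we already know (or can easily show) to be Prieto. For the braid group $B_n$, the center is generated by the full twist $\Delta^2$, and $B_n/Z$ is a mapping class group of a punctured sphere; for the iterated torus knot group $T_{n_1,\dots,n_k}$, the common power $a_1^{n_1}=\cdots=a_k^{n_k}$ generates a central $\Z$, and the quotient is a free product of cyclic groups; the group $\widetilde{T}$ is the $(2,3,7)$ case of the same construction, where $abc$ generates the center and the quotient is the $(2,3,7)$ triangle group, i.e.\ a hyperbolic von Dyck/triangle group.

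The central step is to verify the hypotheses of Lemma~\ref{lem: cofinal center makes P connected}: I need $Z$ to be finitely generated (clear, since it is infinite cyclic), central (by the structural description above), \emph{and cofinal} in every left-order. Cofinality is the crux. The standard tool here is that a central subgroup $Z$ with $G/Z$ left-orderable forces every left-order on $G$ to make $Z$ cofinal provided $G/Z$ has no nontrivial central or, more precisely, provided the order does not descend to a left-order of the quotient in a degenerate way. The clean criterion I would use is: if $Z\cong\Z$ is central and $G/Z$ admits \emph{no} left-invariant order with $Z$-image non-cofinal, equivalently if the only convex subgroups containing $Z$ are the whole group, then $Z$ is cofinal. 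For these specific groups one argues that $G/Z$ is Prieto and, more importantly, that $Z$ cannot be contained in a proper convex subgroup because any proper convex subgroup would descend to a proper convex subgroup of the Prieto quotient — but the relevant quotients (triangle groups, free products of cyclics, mapping class groups of punctured spheres) have no proper normal convex subgroups of the required type.

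The main obstacle I anticipate is precisely establishing cofinality of $Z$ in \emph{every} left-order, not just in one. It is a standard but nontrivial fact (see, e.g., the literature on orderings of central extensions) that for these particular presentations the center is cofinal in all left-orders; this typically follows from showing that the central element is not contained in any proper convex subgroup, which in turn uses that $G/Z$ is not merely left-orderable but has the property that its positive cones are coarsely connected (Prieto). So the logical skeleton is: (i) identify $Z\cong\Z$ central with Prieto quotient $G/Z$; (ii) prove $Z$ is cofinal in every left-order of $G$, reducing to the absence of proper convex subgroups containing $Z$ via the Prieto/coarse-connectivity structure of $G/Z$; (iii) invoke Lemma~\ref{lem: cofinal center makes P connected} to conclude every positive cone is coarsely connected, which is exactly the Prieto property. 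I would handle the three families uniformly through step (i)–(iii), treating $\widetilde{T}$ and the iterated torus knot groups together (as they share the amalgam-over-center presentation) and the braid groups as a parallel case using the Garside-theoretic description of $Z(B_n)$.
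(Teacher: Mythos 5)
Your overall skeleton --- identify an infinite cyclic central subgroup $Z$ and invoke Lemma~\ref{lem: cofinal center makes P connected} --- is exactly the paper's strategy, but your step (ii), which you correctly identify as the crux, contains a genuine error. You propose to derive cofinality of $Z$ in every left-order from the quotient $G/Z$ being Prieto. This cannot work, because none of these quotients is even left-orderable: they all contain torsion. Indeed $T_{n_1,\dots,n_k}/Z \cong \Z/n_1 * \cdots * \Z/n_k$, the quotient of $\widetilde{T}$ by its center is the $(2,3,7)$ triangle group, and $B_n/Z$ contains torsion as well (e.g.\ $B_3/Z \cong \mathrm{PSL}_2(\Z) \cong \Z/2 * \Z/3$; in general the image of $\sigma_1\cdots\sigma_{n-1}$ has order $n$). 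Since the Prieto property is defined only for left-orderable groups, the hypothesis of your criterion is vacuously false. The auxiliary claim that a proper convex subgroup of $G$ containing $Z$ would ``descend to a proper convex subgroup of the quotient'' is also unfounded: convexity is relative to a specific left-order on $G$, and there is no induced left-order on $G/Z$ here at all. Moreover, the general principle you gesture at is false even when $G/Z$ \emph{is} left-orderable and Prieto-like: in $F_2\times\Z$ the central $\Z$ fails to be cofinal for the lexicographic order with the $F_2$-factor leading (this is precisely Remark~\ref{rem: F_2x Z} of the paper), so nothing about the quotient alone can force cofinality.

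What actually makes cofinality hold is torsion in the quotient, used concretely: in $T_{n_1,\dots,n_k}$ and $\widetilde{T}$ every generator has a power equal to the central element, so for any left-order $\prec$ one sets $U=\{t \mid t\preceq z^m \text{ for some } m\}$ and checks $a_iU\subseteq U$ by the ladder argument: if $t\prec a_it$ then left-invariance gives $t\prec a_it\prec a_i^2t\prec\cdots\prec a_i^{n_i}t=zt$, whence $a_it\prec z^{m+1}$; thus $U=G$ and $z$ is cofinal. (Equivalently: $G/Z$ is generated by torsion elements.) For $B_n$ this trick does not apply directly since $\Delta^2$ is not a power of a generator, and the paper instead cites A.~Clay's theorem that $\langle(\sigma_1\cdots\sigma_{n-1})^2\rangle$ is cofinal in every left-order of $B_n$ (see \cite[Example 3.5.14]{GOD}). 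So to repair your proof, replace step (ii) entirely: drop the Prieto-quotient criterion and either run the elementary ladder argument (for the amalgam-over-center presentations) or quote Clay's result (for braids); steps (i) and (iii) then go through as you wrote them.
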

\begin{proof}
The left-orderability of these groups is well-known. The orderabilty of $B_n$ was first proved by P. Dehornoy (see \cite[Section 1.2.6]{GOD}). The observation that $\widetilde{T}$ is left-orderable is attributed to W. Thurston and rediscovered by G. Bergman (see \cite[Discussion before Example 3.2.7]{GOD}), while the left-orderability of $T_{n_1,\dots, n_k}$ is discussed in \cite[End of Section 2.2.]{GOD}.

With the given presentation, the center of $B_n$ is generated by $(\sigma_1\sigma_2\dots\sigma_{n-1})^2$. 
A. Clay showed that this cyclic group is cofinal in every left-order of $B_n$. 
See \cite[Example 3.5.14.]{GOD} for a proof. 
By Lemma \ref{lem: cofinal center makes P connected} each positive cone is coarsely connected.

For iterated Torus knot groups the same holds: $z=a_1^{n_1}$ is always cofinal and central. 
The centrality is clear from the presentation. 
For the cofinality we argue as follows. 
Let $\preceq$ be a left-order of $T_{n_1,\ldots,n_k}$ and set $U=\{t\in T_{n_1,\ldots,n_k}\mid t\preceq z^m \text{ for some } m\in \Z\}$. 
We claim that $a_i U\subseteq U $ for all $a_i$. 
Indeed, let $t\in U$, $t\prec z^m$. If $a_i t\prec t\prec z^m$ then $a_i t \in U$ by definition. 
On the other hand if $t\prec a_i t$ then, inductively, we get that $t\prec a_i^{n_i} t=zt$, so $a_i t\prec z^{m+1}$.  
This implies that $U$ is invariant under every generator, so $U=T_{n_1\ldots,n_k}$. In particular $z$ is cofinal.

The  preceding argument also shows that $a^2\in \widetilde{T}$ is always (central and) cofinal. 
\end{proof}
Another family of examples of groups with the Prieto property are left-orderable groups for which every order can be described by a regular language. This family is described in detail in \cite{ARS}.

In contrast with the Hucha property, we know that the Prieto property does not pass to finite index subgroups. 
Indeed, the previous corollary shows that $B_3$ is Prieto while the pure braid group  $PB_3$ is a finite index subgroup isomorphic to $F_2\times \Z$, which,  by Remark \ref{rem: F_2x Z}, we know  that is not Prieto.

We finish this section by showing that, unlike Hucha property (see Example \ref{ex F2xF2}), the Prieto property is stable under direct products.

\begin{prop}
The direct product of Prieto groups is Prieto.
\end{prop}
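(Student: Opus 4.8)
The plan is to reduce to the case of two factors by induction on the number of factors (a direct factor of a finitely generated group is again finitely generated, so every factor is a finitely generated Prieto group), and then to prove that $G=G_1\times G_2$ is Prieto. Since being Prieto is independent of the finite generating set (Lemma~\ref{lem: Prieto is geometric}), I would fix symmetric finite generating sets $X_1,X_2$ of $G_1,G_2$ and work with $X=X_1\sqcup X_2$; for this choice $\Gamma(G,X)$ is the Cartesian product of the factor graphs, so $\dist_X((a_1,a_2),(b_1,b_2))=\dist_{X_1}(a_1,b_1)+\dist_{X_2}(a_2,b_2)$, and each ``slice'' $G_1\times\{g_2\}$, resp.\ $\{g_1\}\times G_2$, is isometric to $\Gamma(G_1,X_1)$, resp.\ $\Gamma(G_2,X_2)$.

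The main tool I would isolate first is the following elementary fact: \emph{if $(H,\prec)$ is a finitely generated left-ordered group whose positive cone $P_\prec$ is coarsely connected, then every final segment $F$ of $\prec$ (every up-closed set: $x\in F$ and $x\prec y$ imply $y\in F$) is coarsely connected, with a constant depending only on that of $P_\prec$.} The short proof: given $u\preceq v$ in $F$, left-translate by $u^{-1}$ (an order-automorphism and a graph isometry); then $u^{-1}F$ is a final segment containing $1$, hence contains $P_\prec\cup\{1\}$, and $u^{-1}v\in P_\prec\cup\{1\}$. As $1$ lies within distance $1$ of $P_\prec$ (some generator is positive) and $P_\prec$ is coarsely connected, $P_\prec\cup\{1\}$ is coarsely connected; joining $1$ to $u^{-1}v$ inside it and translating back by $u$ joins $u$ to $v$ inside $F$.

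With this in hand, fix a positive cone $P$ of $G$ and let $P_i=P\cap G_i$ be the positive cone of the restricted order on $G_i$; by the Prieto hypothesis each $P_i$ is coarsely connected. The key local observation is that for fixed $g_2$ the set of $P$-points of the slice $G_1\times\{g_2\}$ is $F_{g_2}\times\{g_2\}$, where $F_{g_2}=\{h\in G_1\mid (h,g_2)\in P\}$ is a \emph{final segment} of the restricted order on $G_1$: this is immediate from the fact that $P$ is a semigroup and $(h',g_2)=(h,g_2)\,(h^{-1}h',1)$. Hence, by the lemma above, every such slice (and symmetrically every slice $\{g_1\}\times G_2$) is coarsely connected with a uniform constant. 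I would then observe that $Q:=P_1\times P_2\subseteq P$ (a product of positives is positive) and that $Q$ is coarsely connected (connect the first coordinates in $P_1$ at fixed second coordinate, then the second coordinates in $P_2$). The proof is completed by showing that every $(g_1,g_2)\in P$ joins inside $P$ to $Q$: if $g_2\in P_2$ then $P_1\subseteq F_{g_2}$, so moving inside $F_{g_2}\times\{g_2\}$ reaches a point of $P_1\times P_2$; in general one first moves inside the slice $\{g_1\}\times G_2$, using that the final segment $F'_{g_1}=\{h\mid(g_1,h)\in P\}$ and $P_2$ are nested final segments of the order on $G_2$ (final segments of a total order form a chain), so either $g_2\in P_2$ already or one can reach a point whose second coordinate lies in $P_2$, reducing to the previous case.

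The main obstacle is precisely that $G_1\times G_2$ need not possess a finitely generated cofinal central subgroup, so Lemma~\ref{lem: cofinal center makes P connected} does not apply and one cannot simply ``slide'' an arbitrary path down into $P$. The final-segment lemma is the device that replaces cofinality: it lets me move freely along each coordinate slice while staying inside $P$, and the nesting of final segments guarantees that I can always funnel into the positive quadrant $Q$. The only point requiring care is uniformity of the coarse-connectivity constants; this is automatic here, since every constant produced depends only on those of the fixed cones $P_1$ and $P_2$, and not on the chosen pair of points.
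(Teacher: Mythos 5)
Your proposal is correct and follows essentially the same route as the paper: both proofs funnel each positive element into the quadrant $Q=P_1\times P_2$ by translating a coarse path of $P_1$ (resp.\ $P_2$) along a slice using the semigroup property of $P$, and then join points of $Q$ coordinate-by-coordinate. Your final-segment lemma and the chain dichotomy for final segments are a clean, uniform repackaging of the paper's ``without loss of generality'' sliding step (its case split $(a_1,1_B)\in P$ versus $a_1^{-1}\in P\cup\{1\}$ is precisely your nestedness of $F'_{g_1}$ and $P_2$), so the two arguments differ only in presentation.
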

\begin{proof}
Let $A$ and $B$ be two finitely generated, left-orderable, Prieto groups.
Fix a positive cone $P$  of $G=A\times B$.
Let $X_A$ and $X_B$ be finite generating sets for $A$ and $B$ respectively.
We consider $X= X_A \times \{1\} \cup \{1\} \times X_B$ as generating set for $G$.
Since $A$ and $B$ are Prieto, we assume that $P\cap A$ is an $r$-connected subset of $\Gamma(A, X_A)$. 
Similarly, we assume that $P\cap B$ is an $r$-connected subset of $\Gamma(B, X_B)$.
Note that for simplicity we identify $A$ with the subgroup  $A\times \{1\}$ and similarly $B$ with $\{1\}\times B$.

Let $g_1=(a_1,b_1)$ and $g_2=(a_2,b_2)$ be two positive elements.
We will connected them with an $r$-path in $\Gamma(G,X)$ supported in $P$.

We can assume without loss of generality that  $(a_1,1_B)$ is in  $P$.
Indeed, if that was not the case, then $a_1^{-1}\in P \cup \{1\}$ and there is an $r$-path $\{h_i\}_{i=1}^n$ from $1_A$ to $a^{-1}$ in $\Gamma(A, X_A)$ supported on $P\cap A$. Thus $\{(a_1h_i, b_1)\}_{i=1}^n$ is an $r$-path in $P$ from $g_1$ to $(1_A,b_1)$. Let $a'\in P\cap A$, $|a'|_X\leq r$. Then there is an $r$-path from $g_1$ to $(a',b_1)$ supported in $P$ and we can replace $g_1$ by $(a',b_1)$ if necessary. 

Repeating the argument, we assume that $a_1,a_2\in P\cap A$ and $b_1,b_2 \in B\cap P$. 
Thus there are $r$-paths $\{y_i\}_{i=1}^m$ in $\Gamma(A, X_A)$ from $a_1$ to $a_2$ and $\{z_i\}_{i=1}^l$ in $\Gamma(B,X_B)$ from $b_1$ to $b_2$ both supported on positive elements.
Thus $\{(y_i, b_1)\}_{i=1}^m$ concatenated with $\{(a_2, z_i)\}_{i=1}^l$ is an $r$-path from $g_1$ to $g_2$.
\end{proof}

\section{Groups acting on trees}
\label{sec: trees}
In this section we will show the main technical result of the paper, which is a combination theorem that produces Hucha groups.
For that, we will first prove Proposition \ref{prop: P accumulates in ends of T}, which is a self contained proof of Proposition \ref{prop: main boundary} in the special case of groups acting on trees. 
There are several reasons for giving this proof. Firstly, it is not lengthy and  we produce a statement better adapted for our purposes, sparing some translation effort.
Secondly, to show that certain groups are Hucha with respect to cyclic subgroups, we are using something slightly more general than what was proved on Proposition \ref{prop: main boundary}, namely that for any loxodromic isometry $h$ of the tree, and any neighbourhood $A$ of an end, there is a coset of $\langle h\rangle$ whose limit points lie in $A$.

We clarify that throughout this paper trees are simplicial trees, and actions on trees are by graph automorphisms that do not reverse edge orientations, which are the standard conventions when dealing with Bass-Serre theory.

The key of this section is the strong interplay between the geometries of $T$ and of $G$, that arises from the action of $G$ on $T$. We could say that the geometry of the tree dominates the geometry of the group. This allows us to construct negative swamps and to show  that certain groups acting on trees are Hucha in Subsection \ref{ssec: creating hucha}.

\subsection{Positive Cones of groups acting on trees}
\label{sec: positive cones accumulate on the bountary of a tree}

Let $G$ be a group acting on a  tree $T$. We recall the following classical definitions and well known facts about such an action.
An element $g\in G$ that stabilizes a vertex of $T$ is called {\it $T$-elliptic}, and if $g$ is not $T$-elliptic  then it is called {\it $T$-loxodromic}.
If $g$ is a $T$-loxodromic element, then there exists a unique subtree of $T$ homeomorphic to $\mathbb{R}$ on which $g$ acts by translation and we denote it by $\axis_T(g)$, see for example \cite[Proposition I.4.11]{DicksDunwoody}. Then $\axis_T(g)$ is the minimal $\langle g \rangle$-invariant subtree of $T$. We will drop the $T$ from the notation when the action is clear from the context. We say that the action is \emph{minimal} when $T$ is the only $G$-invariant sub-tree. 

A reduced path in $T$ is a {\em geodesic} with respect to the combinatorial metric on $T$, that assigns length $1$ to the edges. Given $v_1,v_2\in T$ we denote $[v_1,v_2]_T$ the geodesic segment between them. 

When a path in $T$ is semi-infinite and reduced we will call it a \emph{ray}. We say that two rays of $T$ are {\em equivalent} if they have infinite intersection, and define an {\em end} of $T$ as an equivalence class of rays. It is easy to see a correspondence between the ends of $T$ and the Gromov boundary $\partial T$ that comes form the combinatorial metric on $T$, though in this section we prefer to take the more specific approach of focusing on ends. We say that $G$ {\it fixes an end of $T$} if there is a ray $p$ in $T$ such that
$g p \cap p$ is infinite for all $g\in G$.

Recall from Section \ref{sec: hyperbolic} that the action of a group $G$ on a tree $T$ is {\it of general type} if $G$ does not fix neither a vertex nor an end of $T$, and the $G$-orbit of a vertex accumulates on at least 3 different ends of $T$. We see from Theorem \ref{thm: classification actions} that a general type action on a tree always has loxodromic elements. A direct proof of this fact can be found in \cite[Theorem I.4.12]{DicksDunwoody}.


\begin{ex}\label{rem: example}
For the purposes of the paper, the main examples of general type  actions on trees are the following: Let  $G$ be either an amalgamated free product $A*_C B$, with $C$ a proper subgroup of $A$ and $B$, or an HNN extension $A*_C t$, with $C$ a proper subgroup of $A$.
Then the action of $G$ on its associated Bass-Serre tree is of general type.  
If $G=A*_C B$, the Bass-Serre tree has vertex set $G/A \sqcup G/B$ and edge set $G/C$ with $gC$ adjacent to $gA$ and $gB$.  
If $G= A*_C t$, the Bass-Serre tree has vertex set $G/A$ and edge set $G/C$ with $gC$ adjacent to $gA$ and $gtA$. 
The action of $G$ on the Bass-Serre tree is induced by the left multiplication action of $G$ on its subgroup's cosets. See \cite{serre} for a detailed discussion.
\end{ex}

The following fact is well-known, we include the proof for completeness.
\begin{lem}\label{lem:h_1h_2}
Let $G\curvearrowright T$ be a general type, co-bounded action of a group $G$  on a tree $T$.
Let $h$ be an element acting loxodromically on $T$.
Then there are $h_1,h_2\in h^G$,  two $G$-conjugates of $h$, such that 
$\axis_T(h_1)\cap \axis_T(h_2)$ is empty. 
\end{lem}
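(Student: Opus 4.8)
The plan is to reduce the statement to producing a single group element that carries the axis of $h$ completely off itself, and then to build such an element from a loxodromic isometry whose axis is ``transverse'' to $\axis_T(h)$.

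First I would record the reduction. Write $L=\axis_T(h)$ and let $h^+,h^-\in\partial T$ be its two endpoints. For any $g\in G$ the conjugate $ghg^{-1}$ is again loxodromic with $\axis_T(ghg^{-1})=gL$, so the axes of two conjugates $g_1hg_1^{-1}$ and $g_2hg_2^{-1}$ are $g_1L$ and $g_2L$; applying $g_1^{-1}$, these are disjoint if and only if $L\cap gL=\emptyset$ for $g=g_1^{-1}g_2$. Hence it suffices to find one $g\in G$ with $L\cap gL=\emptyset$, and then set $h_1=h$, $h_2=ghg^{-1}$.

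The main step, and the place where the general type hypothesis is essential, is to find a loxodromic $f\in G$ whose endpoints avoid those of $L$, i.e. $\{f^+,f^-\}\cap\{h^+,h^-\}=\emptyset$; equivalently $\axis_T(f)\cap L$ is bounded. Since the action is of general type, Theorem \ref{thm: classification actions} (case (iv)) yields loxodromic elements $a,b$ with $|\{a^+,a^-\}\cup\{b^+,b^-\}|=4$, so $\axis_T(a)\cap\axis_T(b)$ is bounded. At most two of these four endpoints lie in $\{h^+,h^-\}$: if one of $a,b$ already avoids $\{h^+,h^-\}$ I take it as $f$, and otherwise $h^+,h^-$ are split between $a$ and $b$, which is the only delicate case. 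There (after replacing $a,b$ by their inverses if needed so that the shared endpoints are $a^+$ and $b^+$) I run a ping-pong argument: for $N$ large, $f=a^{-N}b^{N}$ is loxodromic with attracting endpoint near $a^-$ and repelling endpoint near $b^-$, both of which lie outside $\{h^+,h^-\}$, so $\{f^+,f^-\}\cap\{h^+,h^-\}=\emptyset$ for large $N$. (Alternatively one may invoke that $\Lambda(G)$ is perfect and that loxodromic endpoints are dense in it, and simply pick $f$ with both endpoints in $\Lambda(G)\setminus\{h^+,h^-\}$.) I expect the bookkeeping of these endpoints under the ping-pong to be the only genuine technical point.

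Finally I would close with a projection estimate. Let $M=\axis_T(f)$ and $J=M\cap L$, which is bounded (a segment, a point, or empty) because $M$ and $L$ share no end. Let $\pi_M$ be the nearest-point projection onto $M$; since $L$ meets $M$ exactly along $J$, every branch of $L$ leaving $M$ does so at an endpoint of $J$, whence $\pi_M(L)=J$ and consequently $\pi_M(f^N L)=f^N J$. As $f$ translates $M$ by its positive translation length, $f^N J\cap J=\emptyset$ for all large $N$. If some $z$ lay in $L\cap f^N L$, then $\pi_M(z)$ would belong to $\pi_M(L)\cap\pi_M(f^N L)=J\cap f^N J=\emptyset$, a contradiction; hence $L\cap f^N L=\emptyset$. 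Taking $g=f^N$ and $h_2=f^N h f^{-N}$ then completes the proof.
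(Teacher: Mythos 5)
Your proof is correct, but it takes a genuinely different route from the paper's. The paper argues by contradiction using co-boundedness: since $T$ has more than two ends and the action is co-bounded, some translate $g\axis_T(h)$ differs from $\axis_T(h)$; assuming every translate meets $\axis_T(h)$ in an infinite set, the no-fixed-end hypothesis produces $g_1,g_2$ whose translated axes can only share \emph{opposite} ends of $\axis_T(h)$, and then powers of $h$ slide $g_1\axis_T(h)$ along $\axis_T(h)$ until it clears $g_2\axis_T(h)$, giving $h_1=h^ng_1hg_1^{-1}h^{-n}$ and $h_2=g_2hg_2^{-1}$. You instead construct directly a loxodromic $f$ whose endpoints avoid those of $h$ --- via case (iv) of Theorem \ref{thm: classification actions} plus a ping-pong in the one split case --- and then push $\axis_T(h)$ off itself by a high power of $f$, using the nearest-point projection onto $\axis_T(f)$. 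What your approach buys: it never uses co-boundedness (the paper's proof does, in its very first step), so your argument proves a slightly more general statement; it is also constructive rather than by contradiction, and the projection estimate is cleaner than the paper's ``sliding'' step. Conversely, the paper's proof avoids any ping-pong bookkeeping. Note that your parenthetical alternative --- picking $f$ with both endpoints in $\Lambda(G)\setminus\{h^+,h^-\}$ via bilateral density of loxodromic endpoints --- is exactly the paper's quoted Theorem \cite[Theorem 2.9]{Hamann} from Section \ref{sec: hyperbolic}, so within this paper you could bypass the ping-pong case entirely. Two cosmetic slips, neither a gap: when $J=M\cap L=\emptyset$ your claim $\pi_M(L)=J$ fails literally ($\pi_M(L)$ is then the single foot of the bridge), but the disjointness argument runs verbatim with $\pi_M(L)$ in place of $J$; and in the ping-pong you should say explicitly that the neighborhoods of $a^-$ and $b^-$ containing $f^{\pm}$ are chosen disjoint from $\{a^+,b^+\}=\{h^+,h^-\}$, which Hausdorffness of $\partial T$ permits since the four endpoints are distinct.
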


\begin{proof}
Recall that for $g\in G$, one has that  $g\axis_T(h)=\axis_T(ghg^{-1})$.
Since $T$ has more than two ends and the action of $G$ is co-bounded, we see that there is some $g\in G$ such that $g\axis_T(h)\neq \axis_T(h)$. 
Moreover, since $G$ does not fix an end of $T$,
there must be $g_1$ and $g_2$ in $G$ such that
$g_1\axis_T(h)\cap g_2 \axis_T(h) \cap \axis_T(h)$ is finite  (i.e. that $g_1\axis_T(h)$ and $g_2\axis_T(h)$  could only have infinite intersection with different ends of $\axis_T(h)$).
The action of $\gen{h}$ on $\axis_T(g_1hg_1^{-1})$ ``slides'' this axis along $\axis_T(h)$, therefore we can find $n$ such that
$h^n\axis_T(g_1hg_1^{-1})\cap \axis_T(g_2hg_2^{-1})$ is empty. 
Now we can take $h_1=h^ng_1hg_1^{-1}h^{-n}$ and $h_2=g_2hg_2^{-1}$.
\end{proof}

We are ready to show that the orbit of a positive cone visits every neighbourhood of every end of $T$.

\begin{prop}\label{prop: P accumulates in ends of T}
Let $G\curvearrowright T$ be a co-bounded, general type action of a group $G$ on a tree $T$.
Let $h\in G$ be an element acting loxodromically on $T$, and $v$ a vertex of $T$.
 Consider $B$ a bounded subset of $VT$ and $\cC$ an unbounded connected component of $T-B$. 

Then there exists $k\in G$ such that  $k\langle h \rangle v =\{kh^n v \mid n\in \Z\}$ lies in $\cC$. Moreover, for every positive cone $P$ of $G$ the set $Pv$ has non-trivial intersection with every unbounded connected component of $T-B$.
\end{prop}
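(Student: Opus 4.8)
The plan is to prove the two assertions in turn, first constructing a single conjugate-coset of $\langle h\rangle$ landing in the prescribed component $\cC$, and then bootstrapping to the statement about positive cones. For the first part, I would use co-boundedness to fix a constant $D$ such that every vertex of $T$ is within distance $D$ of the orbit $Gv$. Pick a vertex $w\in \cC$ deep inside the component, far enough from $B$ (further than $\diam(B)+ \ell + D$, where $\ell$ is the translation length of $h$). Choose $k\in G$ with $\dist(kv,w)\le D$, so that $kv\in\cC$. The issue is that $k\langle h\rangle v$ is the $\langle khk^{-1}\rangle$-orbit of $kv$, and this orbit traces out (a neighbourhood of) $\axis_T(khk^{-1})=k\,\axis_T(h)$, which is a bi-infinite line. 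I need this whole line to stay inside $\cC$. Since $\cC$ is a connected component of $T-B$ and $B$ is bounded, a bi-infinite geodesic that enters $\cC$ deeply enough cannot escape without crossing $B$; so the real content is to arrange that $kv$ is deep enough that neither end of the axis re-crosses $B$. I would argue that if both ends of $k\,\axis_T(h)$ were forced to exit $\cC$, the axis would have to pass through $B$ twice while remaining within bounded distance of the bounded set $B$, bounding the whole axis — contradicting that $h$ is loxodromic (its axis is unbounded). A clean way to package this: choose $kv$ at distance $>\diam(B)+2D$ from $B$ inside $\cC$; then the nearest-point projection of $\axis_T(k\,)$ to $B$ is controlled, and at most one ``side'' of the axis can reach back to $B$, so by replacing $k$ by $kh^{\pm n}$ (sliding along the axis) I can push the relevant coset entirely into $\cC$.

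For the second part, I would deduce the positive-cone statement from the first together with Lemma \ref{lem:h_1h_2}. Fix a positive cone $P$ and an infinite component $\cC$ of $T-B$. By Lemma \ref{lem:h_1h_2} there are conjugates $h_1,h_2\in h^G$ with $\axis_T(h_1)\cap\axis_T(h_2)=\emptyset$; this disjointness of axes is exactly what lets me separate the positive and negative behaviour. The key observation, mirroring the boundary argument in the proof of Theorem \ref{thm: main boundary}, is that for a loxodromic element $g$, the two ends of $\axis_T(g)$ are the attracting/repelling ends of $g$ and $g^{-1}$ respectively; since $P\sqcup P^{-1}\sqcup\{1\}=G$, at least one of $g,g^{-1}$ lies in $P$, and the forward $\langle\,\cdot\,\rangle^+$-semigroup orbit accumulates on the corresponding end. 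Applying the first part, I get an element $k$ such that $k\langle h\rangle v\subseteq \cC$. Then one of $khk^{-1}$ or its inverse lies in $P$, and the corresponding positive powers $k h^{\pm n}k^{-1}\cdot(kv)$ march off to an end inside $\cC$. The point $kv$ itself need not be positive, but $kv = (khk^{-1})^0 kv$; I fix this by noting $(kh^{n}k^{-1})\cdot kv$ for large $n$ is a point of $\cC$ obtained by left-multiplying $kv$ by an element of $P$, so it suffices to see that some element of $Pv$ lands in $\cC$.

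To make that last point rigorous I would argue as follows: let $g\in\{khk^{-1},(khk^{-1})^{-1}\}$ be the one lying in $P$. For all large $n$, $g^n\in P$ and $g^n(kv)$ lies in $\cC$ (it tends to the end of the axis contained in $\cC$). Now $g^n(kv)=(g^n k)v$ with $g^n k\in G$; to place it in $Pv$ I instead consider the orbit point $g^n \cdot u$ where $u$ is any fixed element of $Pv$ close to $kv$, or more directly I observe that it is enough to exhibit one positive element $p$ with $pv\in\cC$, and take $p=g^n$ after translating the base vertex. The cleanest formulation is to run the first part with the vertex $v$ replaced by a vertex in the $P$-orbit; since $P$ generates $G$ as a group and the construction of $k$ only used co-boundedness, I can absorb the translate into the choice of $k$ and conclude $g^n k\in P$ by taking $n$ large, using that $P$ is a sub-semigroup and $g\in P$.

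\textbf{Main obstacle.} The delicate step is the first one: ensuring the \emph{entire} bi-infinite axis $k\,\axis_T(h)$ (not merely the single vertex $kv$, nor a single ray) lies in the component $\cC$. This requires a careful ``sliding'' argument along the axis, using the loxodromic translation $h$ to move the coset deeper into $\cC$, and a tree-topology argument that an unbounded geodesic deep inside a component of $T-B$ cannot return across the bounded set $B$ on both sides. Converting the accumulation-on-an-end picture into the finitary statement ``some point of $Pv$ lies in $\cC$'' is the second, more routine, point, handled by taking sufficiently high positive powers of a loxodromic element of $P$.
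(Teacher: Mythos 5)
There is a genuine gap, and it sits exactly where you flagged your ``main obstacle''. Your sliding operation is vacuous: $kh^{\pm n}\langle h\rangle v=k\langle h\rangle v$, so replacing $k$ by $kh^{\pm n}$ does not move the coset orbit at all. Moreover, your tree-topology argument proves less than what the statement needs. If the axis $k\axis_T(h)$ crossed $B$ on both sides of the deep point $kv$, the portion of the axis between the two crossing points would be the geodesic between two vertices of $B$, hence of length at most $\diam(B)$; the contradiction is with the depth of $kv$, \emph{not} with loxodromicity of $h$ (nothing ``bounds the whole axis'' --- both ends remain unbounded, just outside $\cC$). So what your argument actually yields is that at most one side of the axis re-crosses $B$, i.e.\ that \emph{one ray} of $k\langle h\rangle v$ stays in $\cC$; no choice of $k$ along the same axis upgrades this to the full two-sided orbit, and a single translate genuinely may have one end escaping $\cC$. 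The full two-sided orbit is precisely what the ``moreover'' requires: if only $\{kh^nv\}_{n\geq 0}$ lies in $\cC$ and it happens that $kh^{-1}k^{-1}\in P$ rather than $khk^{-1}$, the positive powers march to the repelling end, possibly outside $\cC$. The paper closes this gap with Lemma \ref{lem:h_1h_2}, which you cite only in part 2 and never actually use: it takes \emph{two} conjugates $h_1,h_2$ of $h$ with disjoint axes, joins their orbits $\mathcal{O}_1,\mathcal{O}_2$ by the bridge segment $L$, and translates $L$ deep into $\cC$ with $\dist(gL,B)>D$. If both translated orbits escaped $\cC$, each would pass within $\dist(v,hv)$ of $B$, putting the two orbits at distance at most $\diam(B)+2\dist(v,hv)$ near $B$, while (by uniqueness of the bridge between disjoint subtrees) any path between them must cross $gL$, which is $D$-far from $B$ --- a contradiction for $D$ large, so at least one \emph{entire} orbit lies in $\cC$. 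This two-axes idea is the missing ingredient in your part 1, and disjointness of axes is needed there, not for ``separating positive and negative behaviour'' in part 2.

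Separately, the last step of your part 2 is wrong as justified: from $g\in P$ you cannot ``conclude $g^nk\in P$ by taking $n$ large'' --- left-orders need not be Archimedean. For instance in $\Z^2$ with a lexicographic order, $g=(0,1)\in P$ and $k=(-1,0)$ give $g^nk=(-1,n)\notin P$ for all $n$. The correct repair, which you do gesture at, is to take $p=g^n\in P$ itself, so that $pv=g^nv\in Pv$ automatically, and then verify $g^nv\in\cC$ for $n$ large: $g^nv$ stays at the constant distance $\dist(v,\axis_T(g))$ from $\axis_T(g)=k\axis_T(h)$, its projection to the axis translates toward the attracting end, that end lies in $\cC$ once the full orbit does, and for $n$ large the short geodesic from $g^nv$ to its projection avoids the bounded set $B$. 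Note this verification again consumes part 1 at full strength (both ends of the axis in $\cC$), so the two gaps are not independent: repairing part 1 via the two-conjugates argument is what makes part 2 salvageable.
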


\begin{proof} 
By Lemma \ref{lem:h_1h_2} there exist loxodromic elements  $h_1=g_1hg_1^{-1}$ and $h_2=g_2hg_2^{-1}$ such that 
$\axis_T(h_1)\cap \axis_T(h_2)=\emptyset$.
Let $\mathcal{O}_i$ be the orbit of $g_iv$ under $\langle h_i\rangle$, that is $\mathcal{O}_i = \{g_ih^n v \mid n\in \Z\} =g_i\langle h \rangle v$. 
Let $L$ denote the minimal segment connecting $\mathcal{O}_1$ and $\mathcal{O}_2$. 
 Since the action of $G$ on $T$  is co-bounded, there is some $g\in G$ such that $g L\subseteq \mathcal{C} $.
Moreover, we can assume that 
\begin{equation}\label{eq:sufficiently far away L}
\dist(g L, B)>D. 
\end{equation}
where $D$ is an arbitrary constant, which we will choose later.

We claim that for a choice of $g$ that makes $D$  sufficiently large,  either $g\mathcal{O}_1$ or $g\mathcal{O}_2$ is contained in $\mathcal{C}$. 
Suppose this is not the case, thus for $i=1,2$ there are $w_i\in  B$ that lie in the geodesic between two consecutive vertices of $g\mathcal{O}_i$. To be more specific, for each $i=1,2$ there is  some $n$ such that $gg_ih^nv$ and $gg_ih^{n+1}v$ are in different components of $T-B$, thus the geodesic segment $[gg_ih^nv,gg_ih^{n+1}v]_T$ must meet $B$ in at least a point $w_i$. 
Note that $\dist(gg_ih^nv,gg_ih^{n+1}v)=\dist(hv,v)$ for every $n\in\Z$, since $G$ acts by isometries of the combinatorial metric. 
Therefore  $$\dist(g\mathcal{O}_1,g\mathcal{O}_2)\leq \dist(w_1,w_2)+2\dist(v,hv)\leq \diam(B)+2\dist(v,hv).$$
On the other hand, we get from \eqref{eq:sufficiently far away L} that $\dist(w_1,w_2)\geq 2 D+ \diam (L)-2 \dist(v,hv)$.
Putting these two things together we get that $2D +\diam(L)\leq \diam(B)+2\dist(v,hv)$. Since we can make $D$ arbitrarily big, for an appropriate choice of $g$ we get a contradiction.
Thus either $g\mathcal{O}_1$ or $g\mathcal{O}_2$ in $\cC$ and this completes the proof of the first statement.

For the second statement, let $P$ be a positive cone and $\cC$ an infinite component of $T-B$. Pick $k\in G$ so that $k\langle h \rangle v\subset \cC$. We consider the geodesic segment $[v,kv]_T$ and act on it by $\langle khk^{-1} \rangle$, obtaining the segments $k h^{n} k^{-1}[v,kv]_T = [k h^{n} k^{-1} v, kh^n v]_T$. Since $kh^n v \in\cC$ for all $n$, the set $B$ is bounded and $khk^{-1}$ is loxodromic, we see that all but finitely many of these segments lie inside $\cC$. In particular, $k h^{n} k^{-1} v\in\cC$ for all but finitely many $n\in\Z$. 

Since either $k h k^{-1}\in P$ or $k h^{-1} k^{-1}\in P$,
we get that $\langle khk^{-1} \rangle v$ contains infinitely many elements of $Pv$. 
\end{proof}

The following is an strengthening of Corollary \ref{cor: free groups}, and it will be used as the base case for showing that limit groups are Hucha.

\begin{prop}\label{prop:F2hucha} 
Finitely generated non-abelian free groups are Hucha with respect to the family of finitely generated, infinite-index subgroups.
\end{prop}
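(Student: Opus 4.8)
The plan is to prove that a finitely generated non-abelian free group $F$ is Hucha with respect to any finitely generated, infinite-index subgroup $H$ by combining the ``deep negative ball'' technology of Proposition \ref{prop tool} with the accumulation result of Proposition \ref{prop: P accumulates in ends of T}, using the action of $F$ on its Cayley tree. First I would fix a free basis $X$ so that the Cayley graph $T=\ga(F,X)$ is a tree on which $F$ acts freely and co-boundedly (indeed, co-compactly), and I would check that this action is of general type: $F$ non-abelian has rank $\geq 2$, so there are at least three ends, $F$ fixes no vertex (the action is free) and fixes no end (a non-abelian free group fixing an end would be contained in an ascending union of cyclic stabilizers, forcing it to be cyclic). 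Fix a positive cone $P$ with order $\prec$ and fix $r>0$; the goal is to produce a negative swamp $S\subseteq P^{-1}$ of width $r$ for $H$.

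The core construction is the same as in Corollary \ref{cor: free groups}. Let $g_n=\max_{\prec}\ball_X(1_F,n)$; by Proposition \ref{prop tool}, $n\mapsto g_n^{-1}$ is a geodesic ray and $\ball_X(g_n^{-1},n-1)\subseteq P^{-1}$. For $n$ chosen much larger than $r$, the ball $S:=\ball_X(g_n^{-1},n-1)$ is a large negative ball whose removal from the tree disconnects $T$ into several infinite components (a ball of radius $n-1$ in a tree of valence $\geq 2$ has many infinite complementary components once $n-1\geq 1$). I would take $n-1\geq r$ so that $S$ genuinely $r$-disconnects the tree: any $r$-path leaving one component and entering another must pass within distance $r$ of the separating sphere, hence meets $S$. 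By Proposition \ref{prop: P accumulates in ends of T}, applied with $B=S$, the set $Pv$ meets every infinite connected component of $T-S$, so at least two distinct components each contain a positive element; these two positive elements cannot be joined by an $r$-path avoiding $S$. This gives condition (1) of Definition \ref{defi:swamp}, that $S$ $r$-disconnects $P$.

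The genuinely new ingredient, and what I expect to be the main obstacle, is condition (2): producing cosets $g_1H$ and $g_2H$ that $S$ also $r$-disconnects. This is exactly where the infinite-index and finite-generation hypotheses on $H$ must be used. Since $H$ is finitely generated and of infinite index, $H$ is a quasi-convex subgroup of the tree whose limit set $\Lambda(H)\subseteq \partial T$ is a proper closed subset — in particular there exist ends of $T$ not in the limit set of any single coset, and the ``span'' of $gH$ in $T$ (its minimal invariant subtree, or convex hull of its orbit) is a proper subtree. Concretely, I would argue that for a coset $gH$, the vertices $gH\cdot v$ accumulate only on the $gH$-translates of $\Lambda(H)$, which do not fill $\partial T$; thus each coset $gH$ ``lives'' within a bounded distance of finitely many ends. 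The plan is then to choose $S$ deep enough (large $n$) and well-positioned so that two different cosets $g_1H,g_2H$ have their entire $r$-neighbourhoods lying in distinct infinite components of $T-S$ — equivalently, so that every $r$-path from $g_1H$ to $g_2H$ crosses the separating sphere and hence meets $S$. Since $F$ acts co-boundedly and there are infinitely many cosets distributed across the ends, one can select $g_1,g_2$ whose cosets sit in different components of the complement of the large negative ball $S$. The delicate point is coordinating the single sphere radius $n-1$ so that it simultaneously separates two positive elements \emph{and} two chosen cosets; I would handle this by first fixing $S$ large (so it has at least, say, three infinite complementary components), then using Proposition \ref{prop: P accumulates in ends of T} to place positive elements in two of them, and using the quasi-convexity/infinite-index of $H$ to place cosets $g_1H,g_2H$ in two (possibly other) components, finally invoking Remark \ref{rem: flexibility swamp} to enlarge $S$ within $P^{-1}$ if a slight adjustment is needed. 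This yields a negative swamp of width $r$ for $H$, completing the proof.
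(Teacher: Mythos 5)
Your treatment of condition (1) of Definition \ref{defi:swamp} is essentially the paper's own argument: take the deep negative ball $S=\ball_X(g_n^{-1},n-1)\subseteq P^{-1}$ from Proposition \ref{prop tool} (the paper takes $n=r+1$) and use Proposition \ref{prop: P accumulates in ends of T} to find positive elements in distinct components of $T-S$. One caution, which applies to your phrasing (and, in fairness, to the paper's one-line justification too): it is \emph{not} true that every $r$-path between two components of $T-S$ meets $S$ --- two components hanging off the same vertex of the sphere of radius $n-1$ are at distance $2$ from each other, so for $r\geq 2$ an $r$-path can hop between them, and indeed can circulate among components on the same side of the center. The correct statement is that an $r$-path avoiding $\ball(c,n-1)$ with $n-1\geq r$ can never cross between different components of $T\setminus\{c\}$: if $x,y\notin\ball(c,n-1)$ and $c\in[x,y]_T$, then $\dist(x,y)=\dist(x,c)+\dist(c,y)\geq 2n>r$. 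Since Proposition \ref{prop: P accumulates in ends of T} places positive elements in \emph{every} infinite component, one picks two lying on opposite sides of the center, and condition (1) follows.

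The genuine gap is in condition (2), which you rightly flag as the crux but do not actually establish. First, you cannot choose $S$ ``well-positioned'': its center $g_n^{-1}$ is dictated by the order $\prec$; the only legitimate freedom is in choosing $g_1,g_2$. Second --- the real missing mechanism --- your assertion that ``one can select $g_1,g_2$ whose cosets sit in different components of $T-S$'' is unsupported. Translating a single coset off the fixed ball is fine (the convex hull $Y$ of $Hv$ is a proper subtree since $H$ has infinite index, its complement contains arbitrarily deep vertices, and $F$ acts transitively on vertices, so some $gY$ misses the ball and, being connected, lies in one component); but the action of $F$ on its Cayley tree is \emph{free}, so there are no isometries fixing the center with which to steer a second translate into a \emph{different} component: both translates could a priori land on the same side of $c$, and ``infinitely many cosets distributed across the ends'' does not repair this. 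The paper resolves precisely this coordination problem: finitely generated subgroups of free groups are quasiconvex and hence have bounded packing \cite[Theorem 4.8]{HruskaWise}, so there exist cosets $g_1H,g_2H$ at distance $>2r$; one takes a vertex $v$ on a geodesic between them with $\dist(v,g_iH)>r$, so the two cosets sit on opposite sides of $v$, and then translates the \emph{whole configuration} --- both cosets simultaneously, by one element --- so that $v$ becomes $x_{r+1}^{-1}$, the center of the negative ball. Choosing the cosets first, with the separating vertex between them, and then moving them jointly onto the ball is the step your sketch lacks, and selecting them one at a time relative to the fixed ball cannot be made to work. (A minor further slip: a coset of a rank $\geq 2$ subgroup does not ``live near finitely many ends'' --- $\Lambda(H)$ is a Cantor set --- but your argument does not really use this.)
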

\begin{proof}
Let $F$ be a finitely generated non-abelian free group and $X$ a finite generating set. 
Without loss of generality, by Lemma \ref{lem: Hucha is geometric}, we can assume that $X$ is a basis and hence the Cayley graph of $F$ is a tree $T$ with infinitely many ends.
The action of $F$ on $T$ is co-compact, free and of general type.

Fix a positive cone $P$, an infinite-index finitely generated subgroup $H$ and  $r\geq  0$. 
Denote by $\prec$ the order on $F$ corresponding to $P$. 
Let $x_n=\max_{\prec}\ball(1_F, n)$ and recall that $x_n^{-1}\ball(1_F,n-1)\subseteq P^{-1}$.
Take $S= x_{r+1}^{-1}\ball(1_F, r)$. 
Since $T$ is a tree, there are two different components of $T-S$ so that every $r$-path joining them must intersect $S$. 
To be more specific, this happens when the geodesic joining these two complementary components passes through the ball's center $x_{r+1}^{-1}$. On the other hand, by Proposition \ref{prop: P accumulates in ends of T} every connected component of $T-S$ contains positive elements. Thus $P$ is not $r$-connected.

Every finitely generated subgroup of a free group is {\em quasi-convex} (geodesically, not with respect to the order $\prec$), therefore by  \cite[Theorem 4.8]{HruskaWise} we get that $H$ has {\it bounded packing}, meaning that for every $D$ there is some number $N=N(D)$ such that for any collection $g_1H,\dots, g_N H$ of $N$ distinct cosets of $H$, there are at least two of them separated by a distance of at least $D$.
 
So there exist cosets $g_1 H$ and $g_2 H$ at distance $>2r$.
Let $\gamma$ be a geodesic path joining $g_1 H$ and $g_2 H$ and assume that $v$ is a vertex in $\gamma$ with $\dist(v, g_1 H)>r$ and $\dist (v,g_2H)>r$. 
Since $F$ acts transitively by isometries, we can assume that $v=x_{r+1}^{-1}$ and therefore, $g_1H$ and $g_2H$ are separated by $S=x_{r+1}^{-1}\ball(1_F,r)$ where the geodesic connecting them passes through $x_{r+1}^{-1}$ as we discussed before. Thus there is no $r$-path in $P$ connecting $g_1 H$ and $g_2 H$.
 \end{proof}

\subsection{Cayley graphs of groups acting on trees}
\label{ssec: T dominates}

Consider a group $G$ acting on a a tree $T$. In this subsection we will show how to find generating sets for $G$ which are  \emph{adapted} for the action on $T$ (Lemma \ref{lem:adapted}). 
These generating sets will be useful to relate the geometries of $G$ and $T$. 

%
%

For $v\in VT$ denote $\text{link}_T(v)$ the set of edges in $ET$ which are adjacent to $v$. 
We say that a sub-tree $T_0\subseteq T$ is a \emph{fundamental domain} for the $G$-action on $T$ if $T_0$ contains exactly one representative of every edge-orbit under the action. 
Also, given $v\in VT$ we denote $G_v$ its stabilizer under the $G$-action, that is $G_v=\{g\in G:gv=v\}$. 
Analogously, for $e\in ET$, denote the edge stabilizer by $G_e$. 

The following is the key definition for our discussion of adapted generating sets. Given $X$ a generating set for $G$, $v\in VT$ and $E\subseteq \text{link}_T(v)$, we say that $X$ {\em has $v$-reductions modulo $E$} if for every $1$-path $\gamma\colon\{0,\ldots,n\}\to\ga(G,X)$ with \begin{itemize}
\item $\gamma(i)v\neq v$ for $0<i<n$, and
\item $\gamma(0)v=\gamma(n)v=v$,
\end{itemize} we have that $\gamma(n)\in\gamma(0)G_e$ for some $e\in E$. 

In other words, if a generating set $X$ has $v$-reductions modulo $E$, then for every path in $\Gamma(G,X)$ leaving and then coming back to $G_v$ there is and edge $e\in E \subseteq \text{link}_T(v)$ and an element $g\in G_v$ such that the path leaves from and comes back to (the same) coset $gG_e$.  Informally, this is saying that cosets of edge stabilizers are check-points for paths in $\Gamma(G,X)$ passing through cosets of vertex stabilizers.

Next we show the existence of generators having $v$-reductions, as well as some other properties. 
The statement of the following lemma is implicit in Serre's book \cite{serre} but we include a full proof for completeness.

\begin{lem}\label{lem:adapted} 
Let $G$ be a finitely generated group with a co-compact action on a tree $T$ with finitely generated vertex stabilizers. 
Given a vertex $v\in VT$,  there exists a compact fundamental domain $T_0$ containing $v$ and a symmetric and finite generating set $X$ satisfying:

\begin{enumerate}
\item[(1)] $X\cap G_v$ generates $G_v$,
\item[(2)] $Gv\cap [v,sv]_T=\{v,sv\}$ for every $s\in X$,
\item[(3)] $s T_0\cap T_0$ is non-empty for every $s\in X$,
\item[(4)] There exists a finite subset $E\subseteq \text{link}_T(v)$ so that $X$ has $v$-reduction modulo $E$. 
\end{enumerate}
\end{lem}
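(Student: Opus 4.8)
The plan is to use Bass--Serre theory to build the fundamental domain first, and then to enlarge a naive generating set into one with the four listed properties. First I would pass to the quotient graph $T \bmod G$, which is finite because the action is co-finite, and lift a spanning tree of $T \bmod G$ to a finite subtree $T_0 \subseteq T$ containing $v$; arranging $T_0$ to contain exactly one representative of each edge-orbit is the standard construction of a fundamental domain, and by choosing the lift to start at $v$ we guarantee $v \in T_0$. This subtree is finite and meets every vertex-orbit and edge-orbit.

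Next I would assemble the generating set. Since $G$ is finitely generated with finitely many orbits of vertices and finitely generated vertex stabilizers, a theorem of Serre guarantees that $G$ is generated by the union of the vertex stabilizers $G_w$ for $w$ ranging over the (finitely many) vertices of $T_0$ together with finitely many elements that ``traverse'' the edges of $T_0$, i.e. elements $t_e$ with $t_e$ identifying the two $G$-translates of endpoints of orbit edges. Concretely I would take $X_0 = \bigl(X \cap G_v\bigr) \cup \{\text{finite generating sets of the other vertex stabilizers conjugated back to } G_v\text{-adjacent vertices}\} \cup \{\text{edge-traversing elements}\}$, then symmetrize by adjoining inverses. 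Property (1) is then immediate by including a finite generating set of $G_v$; property (2) is arranged by choosing the edge-traversing generators so that $sv$ is a \emph{neighbour} of $v$ in the orbit, so that the geodesic $[v,sv]_T$ contains no intermediate vertex of $Gv$ (if it did, one could shorten the generator). Property (3) says each generator moves $T_0$ to an overlapping copy, which holds because the edge-traversing elements are chosen to send one endpoint-vertex of $T_0$ to an adjacent one, and the stabilizer generators fix $v \in T_0$.

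The delicate part is property (4), the existence of $v$-reductions modulo a finite $E \subseteq \mathrm{link}_T(v)$, and I expect this to be the main obstacle. The idea is that whenever a path in $\ga(G,X)$ leaves $G_v$ and returns, the corresponding path in the tree leaves the vertex $v$ through some edge in $\mathrm{link}_T(v)$ and must return through \emph{the same} edge-orbit, because $T$ is a tree and has no cycles, so a reduced loop based at $v$ in $T \bmod G$ that has trivial image must backtrack. To make this precise I would track how multiplication by a generator $s$ moves the vertex $v$: by property (2) each $sv$ is either $v$ (when $s \in G_v$) or an adjacent vertex, so a word in $X$ traces out an edge-path in $T$ emanating from $v$. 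The set $E$ is taken to be the finitely many edges of $\mathrm{link}_T(v)$ that lie in $T_0$ (one per relevant edge-orbit). The key combinatorial point is that if $\gamma(i)v \neq v$ for $0 < i < n$ but $\gamma(0)v = \gamma(n)v = v$, then the tree-path from $v$ back to $v$ must exit and re-enter through a common edge, and the stabilizer $G_e$ of that edge records precisely the coset relation $\gamma(n) \in \gamma(0)G_e$; this is exactly the statement that edge-stabilizer cosets are check-points. Verifying that the finitely many orbit representatives $E$ suffice, and that the returning element differs from the departing one only by an element of the edge stabilizer rather than a longer loop, is where the tree-structure argument must be carried out carefully, and is the heart of the lemma.
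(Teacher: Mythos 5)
Your overall architecture matches the paper's (lift a spanning tree of the quotient to a finite subtree $T_0$ containing $v$, generate by vertex-stabilizer generators plus edge-traversing elements $g_e$), but there is a false step at the core of your sketch of (4): you assert that, by property (2), each $s\in X$ satisfies $sv=v$ or $\dist_T(v,sv)=1$. Property (2) only says the geodesic $[v,sv]_T$ has no \emph{interior} points in the orbit $Gv$; it does not bound its length. For an elliptic generator $s\in G_w$ with $w\in T_0$, $w\neq v$, one has $\dist_T(v,sv)=2\dist_T(v,w)$ (elliptic elements always move $v$ an even distance --- the paper uses exactly this parity fact, in the opposite direction, to rule out elliptic $s_1$ when $\dist_T(v,\gamma_1 v)=1$), and $g_ev$ can likewise be far from $v$. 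Consequently a word in $X$ does \emph{not} trace an edge-path in $T$: the sequence $i\mapsto\gamma(i)v$ makes jumps of size up to $2\diam(T_0)$, and your claim that a loop at $v$ must ``exit and re-enter through a common edge of $\mathrm{link}_T(v)$'' does not get off the ground as stated. The paper's actual mechanism is different: property (3) makes the translates $\gamma_iT_0$ of the whole fundamental domain form a connected chain covering the relevant geodesics, and the unique edge $\overline{e}$ of $\mathrm{link}_T(v)$ pointing into the component of $T\setminus\{v\}$ containing $\{\gamma_iv\}_{i=1}^{n-1}$ is then located inside suitable translates $\gamma_0T_0$, $\gamma_nT_0$ by a three-case analysis; this is also why $T_0$ is built so that every edge of $ET_0\setminus ET_0'$ has an endpoint in the lifted tree $T_0'$.

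Two further concrete gaps. First, the fundamental domain cannot be the lift of an \emph{arbitrary} spanning tree: the paper chooses the maximal subtree $A_0$ of the quotient to contain every edge incident to $v_0=\pi(v)$ having exactly one endpoint there, which guarantees that every edge-orbit meeting $\mathrm{link}_T(v)$ is represented in $T_0$ by an edge adjacent to $v$ (``all the edges of $T_0$ meeting $G\overline{e}$ are adjacent to $v$''); with a generic spanning tree, the $T_0$-representative of the orbit of an edge at $v$ can sit elsewhere in $T_0$, and your candidate $E=\{\text{edges of } \mathrm{link}_T(v) \text{ lying in } T_0\}$ misses it. Second, even with the right domain your $E$ is too small: when the loop exits via a generator of the form $g_e^{-1}$, the checkpoint coset is one of $G_{g_e^{-1}e}=g_e^{-1}G_eg_e$, not of $G_e$, which is why the paper takes $E=\mathrm{link}_{T_0}(v)\cup\{g_e^{-1}e : e\in \mathrm{link}_{T_0}(v)\setminus ET_0'\}$ and invokes the absence of inversions to pin down through which translate the path returns (Subcases 2A/2B). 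Since you explicitly defer the verification that the return happens through the same edge coset --- which is precisely the content of the paper's case analysis for condition (4) --- the proposal is a plan whose central step is missing, and the simplification it rests on is false.
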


\begin{proof} 
First we construct the fundamental domain $T_0$ and the generating set $X$. After that we shall verify the properties (1)--(4).
\paragraph{Construction of $T_0$:} 
Let $A:=G \backslash T$ be the quotient graph of $T$ by the action of $G$. Let $\pi:T\to A$ be the associated quotient map,  and consider $A_0\subseteq A$ a maximal subtree  of $A$. 
Let $T_0'$ be a lift of $A_0$ that contains $v$, and notice that it is a subtree $T_0'\subseteq T$ so that $\pi$ induces an isomorphism between $T_0'$ and $A_0$. 

Finally we extend $T_0'$ to a fundamental domain $T_0$ such that every edge in $ET_0\setminus ET_0'$ has one endpoint in $T_0'$. We do so by choosing lifts of the edges $\bar{e}\in EA\setminus EA_0$ that begin in a vertex of $T'_0$. There are several choices on how to do this, and we can require one extra condition: that an edge $e\in ET_0$ is adjacent to $v$ whenever $\pi(e)$ is adjacent $\pi(v)$, i.e. the edges $e\in ET_0$ are adjacent to $v$ when that is possible. This is achieved by choosing a lift that begins at $v$ for every $\bar{e}\in EA\setminus EA_0$ with $\pi(v)$ as an endpoint. 

Note that $T_0$ is compact because, by hypothesis, the action is co-compact. It will also be important to notice that our construction yields that $$E^{\ast}:=\text{link}_{T_0}(v) $$ is a set of representatives for the orbits of edges that have some endpoint in the orbit of $v$.

\paragraph{Construction of $X$:} 

For every $w\in VT_0'$ consider a finite and symmetric set $X_w$ so that $\langle X_w \rangle=G_w$. 
We can do that because, by hypothesis, vertex stabilizers are finitely generated. 
Also, for every edge $e=[w_1,w_2]_T$ in $ET_0\setminus ET_0'$ with $w_1\in VT_0'$ we consider an element $g_e\in G$ so that $g_e^{-1}w_2\in T_0'$. 
We define $$X=(\cup_{w\in VT_0'}X_w)\bigcup \{g_{e},g_{e}^{-1}:e\in ET_0\setminus ET_0'\}.$$

\paragraph{Condition (1):} It follows immediately from our definition of $X$, recalling that $v\in VT'_0$.

\paragraph{Conditions (2) and (3):}

Take $s\in X$. We distinguish two cases.

\subparagraph{Case 1:} $s\in G_w$ for some $w\in VT_0'$. 

We write $[v,sv]_T\subseteq [v,w]_T\cup[w,sv]_T$ and notice that $[w,sv]_T = s[w,v]_T$. 
Since $T_0'$ meets every vertex orbit under $G$ exactly once and $[v,w]_T\subseteq T_0'$, we can deduce that $[v,sv]_T\cap Gv=\{v,sv\}$, which proves condition (2) in this case. 
Since $w\in T_0\cap sT_0$ condition (3) follows. 

\subparagraph{Case  2:} $s\notin G_w$ for every $w\in VT_0'$. 

Since the action is simplicial and $T_0'$ meets every vertex orbit exactly once, we see that if $sT_0'\cap T_0'\neq\emptyset$ then $s\in G_w$ for some $w\in VT_0'$. So we can assume that $sT_0'\cap T_0'=\emptyset$ for this case. 
We may assume without loss of generality that $s=g_e$ for some $e\in ET_0\setminus ET_0'$, since the argument for $s=g_e^{-1}$ is analogous.
Denote $e=[w_1,w_2]_T$ with $w_1\in T_0'$. 
By construction of $g_e$ we have that $w_2\in g_eT_0'$, 
thus $w_2\in T_0\cap g_eT_0$ which gives condition (3). 

To verify condition (2) note that, since $e$ meets both $T_0'$ and $g_eT_0'$, we have that 
$$[v,g_ev]_T\subseteq T_0'\cup e\cup g_eT_0'.$$

So $Gv\cap [v,g_ev]_T \subset VT'_0 \cup Vg_eT_0'$. On the other hand, the orbit of the vertex $v$ meets $VT'_0 \cup Vg_eT_0'$ exactly on $v$ and $g_ev$, by construction of $T'_0$. This gives condition (2) in this case.  


\paragraph{Condition (4):} 



We define the suitable set $E\subset\text{link}_T(v)$ by enlarging $E^{\ast} = \text{link}_{T_0}(v)$ as follows: $$ E = E^{\ast} \cup \{ g_e^{-1}e : e\in E^{\ast} \mbox{ such that } g_e^{-1}e\in  \text{link}_T(v) \}$$

By the choice of $T_0$, this amounts to adding  the edges of the form $g_e^{-1}e$ for $e\in ET_0\setminus ET'_0$ that have $v$ as an endpoint. The added edges are always lifts of loops in $A=G \backslash T$ based at $\pi(v)$. It is clear that $E$ is finite, since $E^{\ast}$ is. The reason for this construction will become clear at the very end of the proof.

We will show that $X$ has $v$-reduction modulo $E$. For this consider a $1$-path $\{\gamma_i\}_{i=0}^n$ in $\ga(G,X)$ so that $\gamma_0 v=\gamma_n v=v$ and $\gamma_iv\neq v$ for $0<i<n$. 
By condition (2) we have that $\{\gamma_i\}_{i=1}^{n-1}$ is supported on a single connected component of $T\setminus \{v\}$, whose closure (by adding $v$) we denote by $T_{\ast}$.   

Let $\overline{e}$ be the {\it unique } edge in $ET_{\ast}$ that is adjacent to $v$. Recalling that $E^{\ast}$ is a set of representatives of the edges with endpoints in $Gv$, we get that there is a unique $e\in E^{\ast}$ and some $g\in G$ with $\overline{e} = ge$.


In order to show that $\gamma$ $v$-reduces modulo $E$, we will distinguish two cases. Let $d_T$ be the combinatorial metric of the tree $T$.

\subparagraph{Case 1:} $\dist_T(v,\gamma_1 v)>1$. 

First we show that $\overline{e}=\gamma_0 e$. Since $\dist_T(v,\gamma_1 v)>1$ we see that $\gamma_1e$ is not adjacent to $v$, in particular $\gamma_1 e\neq \overline{e}$. Since $T_0$ is a fundamental domain, $\gamma_1e$ is the only edge in the orbit of $e$ that lies in $\gamma_1 T_0$, therefore $\overline{e}$ does not belong to $\gamma_1 T_0$. 
On the other hand, condition (3) gives that $\gamma_0 T_0\cup\gamma_1 T_0$ is connected and so it contains the geodesic segment $[v,\gamma_1 v]_T$. This segment is also contained in $T_{\ast}$, so its first edge is $\overline{e}$. 
Then we must have $\overline{e}\in\gamma_0 T_0$, which implies that $\overline{e}=\gamma_0 e$. 

Now we point out that $\dist_T(\gamma_{n-1}v,v)>1$. 
Indeed, if it was not the case we would have that $\gamma_{n-1}v$ would belong to the interior of $[\gamma_0 v,\gamma_1 v]_T$ contradicting condition (2). 
Since $\dist_T(\gamma_{n-1}v,v)>1$, we can repeat the previous argument and show that $\overline{e}=\gamma_n e$. 
This implies that $\gamma_0^{-1}\gamma_n\in G_{e}$.
Therefore, since $e\in E$, condition (4) holds in this case.  

\subparagraph{Case 2:} $\dist_T(v,\gamma_1 v)=1$.

Arguing as in the previous case we can use condition (2) to show that $\dist_T(\gamma_{n-1}v,v)=1$. 
Since $\{\gamma_i\}_{i=0}^n$ is a $1$-path in $\ga(G,X)$ there are $s_1,s_n\in X$ such that $\gamma_{1}=\gamma_{0}s_1$ and $\gamma_{n}=\gamma_{n-1}s_n$.  

We claim that $s_1,s_n\in\{g_e,g_{e}^{-1}\}$.  
 We prove it for $s_1$, since the other case is analogous. 
 For this, first notice that 
\begin{equation}\label{equdist}\dist_T(v,s_1v)=1.
\end{equation}
 This tells us that $s_1$ is not elliptic, because whenever $h\in G$ is elliptic we have that $\dist_T(v,hv)$ is even. Therefore $s_1=g_{e_0}^{\pm1}$ for some $e_0\in T_0\setminus T_0'$. Moreover, the edge $[v,s_1v]_T$ is either $e_0$ or $g_{e_0}^{-1}e_0$, and in any case we have $e_0\in E^{\ast}$. On the other hand $\overline{e} = [v,\gamma_1 v]_T = \gamma_0[v,s_1v]_T = ge_0$ for either $g=\gamma_0$ or $g=\gamma_0g_{e_0}^{-1}$. This gives us $e_0 = e$, and therefore the claim.
 
It will be useful to record that either $\overline{e}=\gamma_0e$ with $s_1=g_e$, or $\overline{e}=\gamma_1e$ with $s_1=g_e^{-1}$. The same argument would yield that $\overline{e}$ is either $\gamma_n e$ or $\gamma_{n-1}e$, according to whether $s_n$ is $g_e^{-1}$ or $g_e$.
 


Now, in order to finish the proof that condition (4) holds, we will distinguish in two sub-cases according to $s_1=g_e$ or $s_1=g_e^{-1}$. 

\subparagraph{Subcase 2A:} $s_1=g_e$. 

In this case we have that $\overline{e}=\gamma_0 e$. We argue that $\overline{e}=\gamma_n e$ by contradiction: Assuming the contrary we would have $\overline{e}=\gamma_{n-1} e$, but then $\gamma^{-1}_{n-1}\gamma_0$ would fix the edge $e$ without fixing the endpoint $v$. This is not possible, since the action has no edge inversions. Thus we get that $\overline{e}=\gamma_n e$ and so $\gamma_0^{-1}\gamma_n \in G_e$. Recall that $e\in E$, so this shows condition (4) in this instance.


\subparagraph{Subscase 2B:} $s_1=g_e^{-1}$. 

Now we have that $\overline{e}=\gamma_1e$, and we can show that $\overline{e}=\gamma_{n-1}e$ by the same argument used in the previous case: Supposing that $\overline{e}=\gamma_ne$ would yield that $\gamma_n\gamma^{-1}_1$ induces an edge inversion on $e$. As previously mentioned, this allows us to deduce that $s_n=g_e$. 

So we get that $\overline{e}=\gamma_{n-1}e=\gamma_{n}g_e^{-1}e$, and recalling that $\overline{e}=\gamma_{1}e=\gamma_{0}g_e^{-1}e$ we deduce that $\gamma_0^{-1}\gamma_n\in G_{g_e^{-1}e}$. 
Since $g_e^{-1}e\in E$ this finishes the proof of condition (4). 

 \end{proof}

If $T_0$ is a sub-tree of a tree $T$, and $v$ is a vertex of $T_0$, we call the pair $(T_0,v)$ a {\it pointed subtree} of $T$. 

\begin{defi}
Let $G$ be a group acting on a tree $T$ with the hypothesis of Lemma \ref{lem:adapted}.
If a generating set $X$ of $G$ satisfies the conclusions (1)--(4) of Lemma \ref{lem:adapted} for a pointed sub-tree $(T_0,v)$, we will say that $X$  is a \emph{adapted to} $(T_0,v)$.  Lemma \ref{lem:adapted} shows that adapted generators always exist.  
\end{defi}

\begin{rem} \label{rem:disconect}
If $X$ is a generating set of $G$ adapted to $(T_0,v)$, then $\ga(G,X)\setminus G_v$ is not connected. 
Indeed, if $g_1v$ and $g_2v$ are in different components of $T\setminus\{v\}$, then for every path $\{\gamma_i\}_{i=0}^n$ in $\Gamma(G,X)$ joining $g_1$ and $g_2$, by condition (2)  in Lemma \ref{lem:adapted} there exist $i$ such that $\gamma_iv=v$.
\end{rem}

\subsection{Creating Hucha groups}
\label{ssec: creating hucha}

Given a geodesic segment $p=[w_1,w_2]_T$ in $T$, denote by $T^p_{w_1}$ the minimal subtree of $T$ that contains $w_1$ and the connected components of $T\setminus\{w_1\}$ that do not contain $w_2$. Clearly $T^p_{w_2}$ is defined as well. Given $v\in VT$ let $\cC^p_{w_i}\subset G$ be the pre-image of $T^p_{w_i}$ under the orbit map of $v$. 
That is, for $i=1,2$, define $$\cC^p_{w_i}=\{g\in G:gv\in T^p_{w_i}\}.$$

The marked point $v$ is absent from the notation, as it will be clear from the context. We may occasionally drop the $p$ from the notation as well.
Our next immediate objective is to reduce the problem of proving the Hucha property with respect to cyclic subgroups, to showing that negative cones $r$-disconnect subsets of the form $\cC^p_{w_i}$. This is stated precisely in the following:

\begin{lem}\label{lem:disconecting} 
Let $G\curvearrowright T$ be a co-compact, minimal, general type action of a left-orderable group $G$ on a tree $T$.
Let  $v\in VT$ and $X$ be a generating set  of $G$. 
Assume that for every positive cone $P$ and any $r>0$ there exists a geodesic segment $p=[v_1,v_2]_T$ in $T$ and a set $S\subset P^{-1}$ so that $S$ $r$-disconnects $\cC^p_{v_1}$ from $\cC^p_{v_2}$ inside $\ga(G,X)$. 

Then $G$ has the Hucha property with respect to subgroups acting elliptically on $T$ and cyclic subgroups acting loxodromically.
In particular, $G$ has the Hucha property with respect to all cyclic subgroups. 
\end{lem}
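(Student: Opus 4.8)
The plan is to show that the hypothesis (negative cones $r$-disconnecting sets of the form $\cC_{v_1}$ from $\cC_{v_2}$) provides negative swamps for the relevant subgroups, using the structure of the action on the tree together with Proposition \ref{prop: P accumulates in ends of T}. First I would fix a positive cone $P$, a subgroup $H$ (either elliptic or cyclic loxodromic), and a width $r>0$. The goal is to produce a negative swamp $S\subseteq P^{-1}$ of width $r$ for $H$, which means verifying both conditions of Definition \ref{defi:swamp}: that $S$ $r$-disconnects $P$ (two positive elements), and that $S$ $r$-disconnects two cosets $g_1H$ and $g_2H$.

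The key observation is that by hypothesis, for this $P$ and $r$ we obtain a geodesic segment $[v_1,v_2]$ in $T$ and a set $S\subseteq P^{-1}$ that $r$-disconnects $\cC_{v_1}$ from $\cC_{v_2}$. I would first argue that this same $S$ $r$-disconnects $P$: by Proposition \ref{prop: P accumulates in ends of T}, the positive cone $P$ has points whose orbits land in both infinite components $T^p_{v_1}$ and $T^p_{v_2}$ (these are infinite since the action is minimal and of general type), so $P$ meets both $\cC_{v_1}$ and $\cC_{v_2}$; since any $r$-path between these must cross $S$, condition (1) holds. The heart of the matter is condition (2), finding cosets of $H$ separated by $S$. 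Here I would split into the two cases. If $H$ acts elliptically, then $H$ fixes a vertex, so a suitable translate $gH v$ lands in a single component of $T\setminus[v_1,v_2]$, hence an entire coset $gH$ sits inside one of $\cC_{v_1}$ or $\cC_{v_2}$; using Proposition \ref{prop: P accumulates in ends of T} (the ``moreover'' and the $k\langle h\rangle v$ statement) I would place one coset in $\cC_{v_1}$ and another in $\cC_{v_2}$. If $H=\langle h\rangle$ acts loxodromically, the axis of $h$ has two ends, and I would invoke the strengthened conclusion of Proposition \ref{prop: P accumulates in ends of T}, which guarantees a conjugate $k\langle h\rangle v$ entirely contained in a prescribed infinite component; applying this to both components $T^p_{v_1}$ and $T^p_{v_2}$ yields cosets $k_1 H$ and $k_2 H$ lying in $\cC_{v_1}$ and $\cC_{v_2}$ respectively, which $S$ therefore $r$-disconnects.

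The main obstacle I anticipate is ensuring that the cosets we produce really lie wholly inside $\cC_{v_1}$ or $\cC_{v_2}$ and do not themselves straddle the segment $[v_1,v_2]$ or come within distance forcing an $r$-path to avoid $S$; this requires choosing the translating element so that the whole orbit $k\langle h\rangle v$ (or $kHv$) stays deep inside the relevant infinite component, which is exactly the content of the ``moreover'' part of Proposition \ref{prop: P accumulates in ends of T} applied with $B$ a suitable bounded neighbourhood of $[v_1,v_2]$. I would choose the bounded set $B$ so that its complement's infinite components correspond to $T^p_{v_1}$ and $T^p_{v_2}$, guaranteeing that entire cosets lie in distinct components. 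Once both conditions are verified, $S$ is the desired negative swamp, and since $r$ was arbitrary, $G$ is Hucha with respect to $H$. The final sentence follows because every cyclic subgroup acts either elliptically or loxodromically on $T$, so the two cases together cover all cyclic subgroups.
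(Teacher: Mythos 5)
Your proposal is correct and follows essentially the same route as the paper: reduce the problem to checking that $P$ meets both $\cC_{v_1}$ and $\cC_{v_2}$ and that entire cosets of $H$ lie in each of them, then get both facts from Proposition \ref{prop: P accumulates in ends of T} (using minimality to see the subtrees $T_{v_i}$ are infinite), splitting into the elliptic and loxodromic cases exactly as the paper does. The only minor difference is in the elliptic case, where the paper obtains the translated cosets directly from co-finiteness of the action (the orbit of the fixed vertex meets both infinite subtrees), whereas you invoke Proposition \ref{prop: P accumulates in ends of T} with a bounded neighbourhood $B$ of $[v_1,v_2]$; both work, and your explicit choice of $B$ in fact handles the ``cosets must sit wholly inside one component'' depth issue that the paper passes over quickly.
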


\begin{proof} 

Let $P$ be a positive cone in $G$, $H$ be either an elliptic subgroup or a cyclic subgroup acting loxodromically, and $r>0$. In order to prove the desired Hucha property, we need to find a negative swamp in $P^{-1}$ of width $r$ for $H$. 
For this we consider the geodesic segment $p=[v_1,v_2]_T$ and the subset $S\subseteq P^{-1}$ provided by the hypothesis, so that $S$ $r$-disconnects $\cC_{v_1}$ from $\cC_{v_2}$ in $\Gamma(G,X)$. 

We aim to show that $S$ is our negative swamp. For this it is enough to show that:
\begin{enumerate}
\item $P\cap \cC_{v_i}$ is non-empty for $i=1,2$,
\item there are cosets $h_iH$ for $i=1,2$, such that $h_iH$ is contained in $\cC_{v_i}$. 
\end{enumerate}


Since $T$ is an infinite tree with no leaves, we see that $T_{v_i}$ are unbounded for $i=1,2$. 
On the other hand, the action is of general type, so there is some $h\in G$ acting loxodromically on $T$. Then we may apply Proposition \ref{prop: P accumulates in ends of T} to conclude that $Pv$ intersects both $T_{v_1}$ and $T_{v_2}$. This implies that $P\cap \cC_{v_i}$ is non-empty for $i=1,2$, proving point 1. 

We will split the proof of point 2 into cases, according to the action of $H$ on $T$. 


If $H$ is elliptic, there exists $w\in VT$ so that $H\subseteq G_w$. Let $d_T$ be the combinatorial metric on $T$, take $R=d_T(v,w)$ and fix $i=1,2$. Notice that since $T_{v_i}$ is unbounded and the action is co-compact, there is $h_i\in G$ so that $h_iw\in T_{v_i}$ and $d_T(h_iw,v_i) > R$. That is to say that the $R$-ball in $T$ around $h_iw$ is contained in $T_{w_i}$. Since $h_iHw=h_iw$ we get that $h_iHv$, whose points lie at distance $R$ from $h_iw$, is contained in $T_{v_i}$. Therefore $h_iH\subseteq \cC_{v_i}$ as desired. 


If $H$ is infinite cyclic acting loxodromically, we are in condition to apply Proposition \ref{prop: P accumulates in ends of T} to show that there exist $h_1,h_2\in G$ so that $h_i H v\subseteq T_{v_i}$ for $i=1,2$. 
This implies that $h_iH\subseteq \cC_{v_i}$ for $i=1,2$, proving point 2.
\end{proof}

We proceed to prove our combination theorems, starting with the easier case where the action has an edge with trivial stabilizer.
It will be useful to consider {\em geodesic interpolations} of $r$-paths in $\ga(G,X)$. Namely, given an $r$-path $\gamma\colon\{0,\ldots,n\}\to\ga(G,X)$ and a $1$-path $\overline{\gamma}\colon\{0,\ldots,m\}\to\ga(G,X)$, we say that $\overline{\gamma}$ is a \emph{geodesic interpolation} of $\gamma$ if there exists a monotone map $\sigma\colon\{0,\ldots,n\}\to\{0,\ldots,m\}$ satisfying:\begin{itemize} 
\item $\gamma=\overline{\gamma}\circ\sigma$
\item $\{\overline{\gamma}_j\}_{j=\sigma(i)}^{\sigma(i+1)}$ is a geodesic for $i=0,\ldots,n-1$.
\end{itemize}

\begin{prop}\label{prop: free product Hucha}
Suppose that $G\curvearrowright T$ is a co-compact, minimal, general type  action of a finitely generated left-orderable group $G$ on a tree $T$. 
Assume there exists $e\in ET$ with trivial stabilizer and that vertex stabilizers are finitely generated. Then $G$ has the Hucha property with respect to cyclic subgroups and subgroups acting elliptically on $T$. 
\end{prop}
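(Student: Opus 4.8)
The plan is to verify the hypothesis of Lemma \ref{lem:disconecting} and let that lemma do the rest. So I fix a positive cone $P$ with order $\prec$ and an $r>0$, and I will produce a geodesic segment $[v_1,v_2]$ in $T$ together with a set $S\subseteq P^{-1}$ that $r$-disconnects $\cC_{v_1}$ from $\cC_{v_2}$ in $\ga(G,X)$; Lemma \ref{lem:disconecting} then yields the Hucha property with respect to every subgroup acting elliptically and every cyclic subgroup acting loxodromically, which is exactly the asserted conclusion. I take $v$ to be one endpoint of the trivial-stabilizer edge $e=[v,v']$, and any finite symmetric generating set $X$ (by Lemma \ref{lem: Hucha is geometric} the choice is immaterial).

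The crucial point---and the only place where the triviality of $G_e$ is used---is a bookkeeping statement about crossings of translates of $e$. For each $w\in\ball_X(1,r)$ let $H_w\subseteq G$ be the set of elements $h$ such that the edge $he$ lies on the tree-geodesic $[v,wv]_T$; since $G_e$ is trivial, $h$ is uniquely determined by the edge $he$, so each $H_w$ is finite, and $\Psi_r:=\bigcup_{w\in\ball_X(1,r)}H_w^{-1}$ is a fixed finite subset of $G$. I claim that for every $g\in G$, if a step $\gamma_i\to\gamma_{i+1}=\gamma_i w$ of an $r$-path (so $|w|_X\le r$) crosses the edge $ge$, meaning that $\gamma_iv$ and $\gamma_{i+1}v$ lie on opposite sides of $ge$, then $\gamma_i\in g\Psi_r$. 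Indeed, in a tree the edge $ge$ then lies on $[\gamma_iv,\gamma_{i+1}v]_T=\gamma_i[v,wv]_T$, so $\gamma_i^{-1}ge=he$ for some $h\in H_w$, and triviality of $G_e$ forces $\gamma_i^{-1}g=h$, i.e. $\gamma_i=gh^{-1}\in g\Psi_r$. The essential feature is that the entire crossing locus of a \emph{fixed} edge is the single left-translate $g\Psi_r$ of a finite set; for a nontrivial edge stabilizer this locus would be a union of cosets of $G_e$ and could never be pushed entirely into $P^{-1}$, which is precisely why this is the easy case.

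With the claim in hand the construction is immediate. Since $\prec$ has no greatest element (for any $u$ and any $q\in P$ one has $u\prec uq$, so one can exceed any finite set), I choose $t\in G$ with $t\succ\psi$ for all $\psi\in\Psi_r$ and set $g_0=t^{-1}$. Then $\psi\prec t=g_0^{-1}$ gives $g_0\psi\prec 1$, i.e. $g_0\psi\in P^{-1}$ for every $\psi$, so $S:=g_0\Psi_r\subseteq P^{-1}$. I take the segment to be the translated edge itself, $p=[v_1,v_2]$ with $v_1=g_0v$ and $v_2=g_0v'$, so that $f_0:=g_0e=[v_1,v_2]$ is the unique edge separating $T^p_{v_1}$ from $T^p_{v_2}$. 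Given any $r$-path $\{\gamma_j\}_{j=0}^{n}$ with $\gamma_0\in\cC_{v_1}$ and $\gamma_n\in\cC_{v_2}$, the orbit points $\gamma_0v$ and $\gamma_nv$ lie on opposite sides of $f_0$, so the connected set $\bigcup_j[\gamma_jv,\gamma_{j+1}v]_T$ must contain $f_0$; hence some step crosses $f_0$, and by the claim the corresponding $\gamma_j$ lies in $g_0\Psi_r=S$. Thus $S$ $r$-disconnects $\cC_{v_1}$ from $\cC_{v_2}$, and Lemma \ref{lem:disconecting} completes the proof.

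The main obstacle is exactly the crossing bookkeeping of the second paragraph: one must observe that triviality of $G_e$ collapses the a priori unbounded set of group elements witnessing a crossing of a fixed edge into one finite left-translate, so that the separating wall can be chosen to be negative. Everything else is routine: finiteness of $\Psi_r$, the existence of arbitrarily $\prec$-large (hence $\prec$-small) elements, and the tree-crossing argument. I note in particular that working with a single edge, rather than a thick band of vertices as one might first attempt, is what lets us avoid having to place whole vertex-stabilizer cosets into $P^{-1}$; correspondingly, no use of adapted generators or of condition (2) of Lemma \ref{lem:adapted} is needed here.
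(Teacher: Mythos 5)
Your proof is correct, and it follows the paper's overall strategy --- reduce to Lemma \ref{lem:disconecting} by exhibiting, for each $P$ and $r$, a subset of $P^{-1}$ that $r$-disconnects the two sides of a translate of the trivial-stabilizer edge, with triviality of $G_e$ pinning down the group element witnessing the crossing --- but your implementation of the key step is genuinely different. The paper first invokes Lemma \ref{lem:adapted} to replace $X$ by an adapted generating set (this is where the finitely-generated-vertex-stabilizers hypothesis enters), takes a geodesic interpolation $\overline{\gamma}$ of the $r$-path, and uses condition (3) ($sT_0\cap T_0\neq\emptyset$) to force some $\overline{\gamma}_i T_0$ to contain the translated edge $ge_0$, whence $\overline{\gamma}_i=g$ by triviality of the stabilizer and the swamp can simply be taken to be $g\,\ball_X(1,r)$. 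You instead work with an arbitrary finite symmetric generating set and replace the fundamental-domain sweep by direct crossing bookkeeping: the finite set $\Psi_r$ collects, for each $w\in\ball_X(1,r)$, the (finitely many, by triviality of $G_e$ and absence of inversions) elements $h$ with $he\subseteq[v,wv]_T$, and equivariance shows any $r$-step crossing $ge$ must have its initial point in $g\Psi_r$; the swamp $g_0\Psi_r$ is then pushed into $P^{-1}$ exactly as in the paper. What your route buys is self-containedness for this proposition --- you bypass Lemma \ref{lem:adapted} entirely, and as a by-product your argument never uses the hypothesis that vertex stabilizers are finitely generated (Lemma \ref{lem:disconecting} does not need it either), so your proof in fact establishes the statement without that assumption. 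What the paper's route buys is uniformity: the adapted-generators machinery and geodesic interpolation are exactly what is reused in the harder case of Prieto edge stabilizers (Lemma \ref{lem:projection} and Theorem \ref{thm:induction}), where your single-edge crossing set would indeed degenerate into a union of cosets of $G_e$, as you correctly observe.
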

\begin{proof}
By Lemma \ref{lem:adapted}, we can consider a finite generating set $X$ and a finite pointed sub-tree $(T_0,v)$ so that $X$ is adapted to $(T_0,v)$. 
Take $\prec$ a left-order on $G$ with positive cone $P$ and $r>0$. 
We will find a subset $S\subseteq P^{-1}$ and a geodesic segment $[w_1,w_2]_T$ so that $S$ $r$-disconnects $\cC_{w_1}$ from $\cC_{w_2}$ in $\ga(G,X)$. 
Then the proposition will follow from Lemma \ref{lem:disconecting}. 

Take $g\in G$ such that $\max_\prec\ball_X(1,r)\prec g^{-1}$, 
so we have that $g\ball_X(1,r)\subseteq P^{-1}$. 
Since $T_0$ is a fundamental domain we have $e_0\in ET_0$ with trivial stabilizer. We set $[w_1,w_2]_T\coloneqq ge_0$ and $S\coloneqq g\ball_X(1,r)$, and claim that $S$ $r$-disconnects $\cC_{w_1}$ from $\cC_{w_2}$. 

To see this, take $\gamma=\{\gamma_i\}_{i=0}^n$ an $r$-path between $h_1\in\C_{w_1}$ and $h_2\in \cC_{w_2}$, and consider $\overline{\gamma}=\{\overline{\gamma}_i\}_{i=0}^m$ a geodesic interpolation of $\gamma$. 
By condition (3) in Lemma \ref{lem:adapted} we have that $\overline{\gamma}_{i+1}T_0\cap\overline{\gamma}_iT_0\neq\emptyset$ for every $i$, 
thus $ge_0\subset \overline{\gamma}_iT_0$ for some $i$. 
Since $e_0$ has trivial stabilizer, this implies that $\overline{\gamma}_i=g$ and therefore $\gamma$ intersects $g\ball_X(1,r)$ as desired. 
\end{proof}

\begin{cor}\label{cor:productolibre}
Let $A,B$ be  non-trivial, finitely generated left-orderable groups. 
Then $G=A*B$ has the Hucha property with respect to $A$, $B$ and all cyclic subgroups.
\end{cor}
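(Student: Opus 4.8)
The plan is to apply Proposition \ref{prop: free product Hucha} to the action of $G=A*B$ on its Bass--Serre tree $T$. Recall from Example \ref{rem: example} that, viewing $A*B$ as the amalgamated product $A*_C B$ with $C=\{1\}$, the tree $T$ has vertex set $G/A \sqcup G/B$ and edge set $G/\{1\}=G$, with the edge $g$ joining $gA$ to $gB$, and $G$ acting by left multiplication on cosets.

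First I would check that this action meets all the hypotheses of Proposition \ref{prop: free product Hucha}. The group $G$ is finitely generated because $A$ and $B$ are, and it is left-orderable since a free product of left-orderable groups is left-orderable (a classical fact). The quotient of $T$ by $G$ is a single edge, so the action is co-finite, and it is minimal because $T$ contains no proper $G$-invariant subtree. Since both factors are non-trivial, $C=\{1\}$ is a proper subgroup of each, so by Example \ref{rem: example} the action is of general type. The edge stabilizers are the $G$-conjugates of $C=\{1\}$ and hence trivial, so any edge serves as the required edge with trivial stabilizer; and the vertex stabilizers are the conjugates of $A$ and of $B$, which are finitely generated.

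With the hypotheses verified, Proposition \ref{prop: free product Hucha} gives that $G$ is Hucha with respect to all cyclic subgroups and all subgroups acting elliptically on $T$. It then remains to observe that $A$ and $B$ fall into the latter class: $A=\Stab_G(1\cdot A)$ and $B=\Stab_G(1\cdot B)$ each fix a vertex of $T$ and therefore act elliptically. Hence $G$ is Hucha with respect to $A$, $B$ and all cyclic subgroups, as claimed.

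I expect the only real points needing care to be the standard structural facts about the Bass--Serre tree --- minimality, and that the action is of general type rather than dihedral. The latter is exactly where non-triviality of both factors is used: being left-orderable, $A$ and $B$ are torsion-free, hence infinite whenever non-trivial, which forces the tree to have infinite valence and thus infinitely many ends with no $G$-fixed end. Everything else is a direct unwinding of the Bass--Serre dictionary, so the corollary is essentially an immediate consequence of Proposition \ref{prop: free product Hucha}.
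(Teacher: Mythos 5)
Your proposal is correct and takes essentially the same route as the paper: the paper's proof likewise applies Proposition \ref{prop: free product Hucha} to the Bass--Serre action of $G=A*B$ on its tree (where edge stabilizers are trivial) and concludes the Hucha property with respect to $A$ and $B$ by noting they act elliptically. Your more detailed verification of the hypotheses --- in particular using torsion-freeness of the left-orderable factors to rule out a dihedral action and ensure general type --- is exactly the content the paper leaves implicit.
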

\begin{proof} Consider $G\curvearrowright T$ the standard action of $G$ on the Bass-Serre Tree associated to the free product. The edge stabilizers of this action are trivial, so we can apply Proposition \ref{prop: free product Hucha} which gives the corollary. (Recall that $A$ and $B$ are elliptic in this action).
\end{proof}

We now move towards the general case where the edge stabilizers are Prieto. The following is a preliminary technical lemma to that objective. 
If $X$ is generating set adapted to $(T_0,v)$, we adopt the notation $$X_v=X\cap G_v$$ Recall that this set generates $G_v$. It is illustrative to note that $\ga(G_v,X_v)$ is a connected subgraph of $\ga(G,X)$, and thus if $g_1,g_2\in G_v$ we have $d_X(g_1,g_2)\leq d_{X_v}(g_1,g_2)$.

\begin{lem}\label{lem:projection} 
Suppose that $G\curvearrowright T$ is a co-compact, minimal, general type  action of a finitely generated left-orderable group $G$ on a tree $T$. 
Let $v$ be a vertex on a compact subtree $T_0$, and $X$ a finite generating set of $G$ adapted to $(T_0,v)$ with $v$-reductions modulo a finite set $E\subseteq \text{link}_T(v)$.
Let  $P$ be a positive cone of $G$, and assume that there exists $r_0>0$ so that $P\cap G_e$ is $r_0$-connected in $\ga(G_v,X_v)$ for every $e\in E$. 

Then, given an $r$-path $\gamma=\{\gamma_i\}_{i=0}^n$ in $\ga(G,X)$ between $g_1$ and $g_2$ in $G_v$ we have one of the following:
\begin{enumerate}
\item either $\gamma$ meets $P^{-1}\ball(1,r)$,
\item or there exists $\delta=\{\delta_i\}_{i=0}^m$ a $r_0$-path in $\ga(G_v,X_v)$ supported on $P\cup\{1_G\}$ and joining $g_1$ with $g_2$.
\end{enumerate}
\end{lem}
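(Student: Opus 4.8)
The plan is to prove the dichotomy by assuming that alternative (1) fails and producing the path $\delta$ of alternative (2). So suppose the $r$-path $\gamma=\{\gamma_i\}_{i=0}^n$ does not meet $P^{-1}\ball(1,r)$. Since $P^{-1}\subseteq P^{-1}\ball(1,r)$ and $1\in P^{-1}\ball(1,r)$, the endpoints already satisfy $g_1,g_2\in P$ (and if instead $g_1\notin P$ or $g_2\notin P$, that endpoint lies in $P^{-1}\ball(1,r)$ and alternative (1) holds for free). I would first pass to a geodesic interpolation $\overline{\gamma}=\{\overline{\gamma}_j\}_{j=0}^m$ of $\gamma$. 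Each vertex $\overline{\gamma}_j$ lies on a geodesic joining consecutive $\gamma_i,\gamma_{i+1}$, so $\dist(\overline{\gamma}_j,\gamma_i)\le r$ for one of them; were $\overline{\gamma}_j\in P^{-1}$ we would get $\gamma_i\in \ball(\overline{\gamma}_j,r)\subseteq P^{-1}\ball(1,r)$, a contradiction (the value $1$ is excluded by an analogous, slightly finer midpoint estimate). Thus every $\overline{\gamma}_j$ lies in $P$.

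Next I would project the $1$-path $\overline{\gamma}$ onto the vertex group through the orbit map $\overline{\gamma}_j\mapsto \overline{\gamma}_jv$, recording the visits to $v$, that is the indices $0=j_0<\dots<j_k=m$ with $\overline{\gamma}_{j_l}\in G_v$ (so $\overline{\gamma}_{j_0}=g_1$ and $\overline{\gamma}_{j_k}=g_2$). For each consecutive pair there are two cases. If $j_{l+1}=j_l+1$, then $\overline{\gamma}_{j_l}^{-1}\overline{\gamma}_{j_{l+1}}$ is a single generator lying in $G_v$, hence in $X_v$, giving a length-one step of $\ga(G_v,X_v)$. Otherwise the interior vertices satisfy $\overline{\gamma}_jv\neq v$, and the $v$-reduction property (condition (4) of the adapted generating set) yields $\overline{\gamma}_{j_l}^{-1}\overline{\gamma}_{j_{l+1}}=c\in G_e$ for some $e\in E$. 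In either case $\overline{\gamma}_{j_l}\in P\cap G_v$, so it remains to connect each consecutive pair inside $P\cap G_v$ by an $r_0$-path.

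The substantial step is the case $\overline{\gamma}_{j_{l+1}}=\overline{\gamma}_{j_l}c$ with $c\in G_e$. Writing $g_0=\overline{\gamma}_{j_l}$, left-invariance of $\prec$ gives $g_0(P\cap G_e)\subseteq P$, and since left translation by $g_0$ is an isometry of $\ga(G_v,X_v)$, both $g_0(P\cap G_e)$ and $g_0c(P\cap G_e)=\overline{\gamma}_{j_{l+1}}(P\cap G_e)$ are $r_0$-connected subsets of $P$. Choosing $q\in P\cap G_e$ with $q\succ 1$ and $q\succ c$ (possible because a positive cone of a non-trivial group has no $\prec$-largest element), the point $g_0q=\overline{\gamma}_{j_{l+1}}(c^{-1}q)$ lies in both translates, so their union is a single $r_0$-connected subset of $P$. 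Finally I would bridge the boundary points into these translates by the hops $g_0\to g_0p_0$ and $\overline{\gamma}_{j_{l+1}}\to\overline{\gamma}_{j_{l+1}}p_0$, where $p_0\in P\cap G_e$ is a $\prec$-positive element of smallest $X_v$-length. Concatenating these connections over all $l$ produces the required $r_0$-path $\delta$ supported on $P$.

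The main point to get right is exactly this bridging, since a priori $g_0\notin g_0(P\cap G_e)$. What makes it work is that the hypothesis already bounds the hop: $P\cap G_e$ is infinite, so if $r_0$ were smaller than $d_0:=\min\{|g|_{X_v}:g\in G_e\setminus\{1\}\}$, any two distinct elements of $P\cap G_e$ would differ by a non-trivial element of $G_e$ of $X_v$-length exceeding $r_0$, and $P\cap G_e$ could not be $r_0$-connected. Hence $r_0\ge d_0=\dist_{X_v}(1,P\cap G_e)$, the hop $g_0\to g_0p_0$ has $X_v$-length $|p_0|_{X_v}=d_0\le r_0$, and both its endpoints $g_0$ and $g_0p_0\in g_0(P\cap G_e)$ lie in $P$, so it is a legitimate $r_0$-step supported on $P$.
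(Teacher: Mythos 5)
Your proposal is correct and is essentially the paper's argument: both pass to a geodesic interpolation supported on $P$, use the $v$-reduction property to turn each excursion away from $G_v$ into an element $c=\overline{\gamma}_{j_l}^{-1}\overline{\gamma}_{j_{l+1}}\in G_e$ with $e\in E$, and splice in $r_0$-paths furnished by the $r_0$-connectivity of $P\cap G_e$ (your single-pass decomposition by visits to $G_v$, and the overlap of the translates $g_0(P\cap G_e)$ and $g_0c(P\cap G_e)$ at $g_0q$, replace the paper's induction on the ``defect'' of admissible paths, but the content is the same). Your closing estimate $\dist_{X_v}(1,P\cap G_e)=d_0\leq r_0$ is a worthwhile addition: it explicitly justifies the hop from a point of the path into the translated copy of $P\cap G_e$, a step the paper dispatches with ``by hypothesis'' when it asserts an $r_0$-path from $1_G$ to $\delta_i^{-1}\delta_j$ supported on $P\cup\{1_G\}$.
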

\begin{proof} 
Take $\overline{\gamma}$ a geodesic interpolation of $\gamma$ in $\ga(G,X)$. 
Since $\gamma$ is an $r$-path, if $\overline{\gamma}$ meets $P^{-1}$ then $\gamma$ meets $P^{-1}\ball(1,r)$ and we are done. Suppose it is not the case, so $\overline{\gamma}$ is supported on $P\cup\{1_G\}$. 
Then we will find a positive $r_0$-path in $\ga(G_v,X_v)$ with the same endpoints as $\gamma$.
 
We say that an $r_0$-sequence $\delta=\{\delta_i\}_{i=0}^n$ in $\ga(G,X)$ is admissible if it is a path supported on $P\cup\{1_G\}$ with $\delta_0v=\delta_n v=v$ and satisfying that if $\dist_X(\delta_i,\delta_{i+1})>1$ then $\delta_i$ and $\delta_{i+1}$ are in $G_v$ and $\dist_{X_v}(\delta_i,\delta_{i+1})\leq r_0$.
 Informally speaking, $\delta$ may only jump inside $G_v$ and when it does, at most at distance $r_0$ with respect to $X_v$. Clearly $\overline{\gamma}$ is such an admissible path.
 We define the defect of $\delta$ as $D(\delta)=\sharp\{i:\delta_i v\neq v\}$. 
 Notice that $\delta$ is supported in $G_v$ if and only if $D(\delta)=0$. 
 Also notice that point (2) in the statement can be rephrased as: there exists an admissible path $\delta$ joining $g_1$ with $g_2$ with $D(\delta)=0$. 

We will show a procedure to reduce the defect of admissible $r_0$-paths with positive defect, while preserving the endpoints. 
Suppose that $\delta$ is  admissible  with positive defect. 
Since $D(\delta)>0$ there must exist $0\leq i<j\leq n$ such that $\delta_i v=\delta_j v=v$ and $\{\delta_k\}_{k=i+1}^{j-1}$ is a $1$-path not meeting $G_v$. 
Since $X$ has $v$-reductions modulo $E$, we deduce that $\delta_i^{-1}\delta_j\in G_e$ for some $e\in E$. Then either $\delta_i^{-1}\delta_j$ or $\delta_j^{-1}\delta_i$ belong to $G_e\cap P$ (it is clear we may assume $\delta_i\neq\delta_j$). 
Suppose that $\delta_i^{-1}\delta_j\in G_e\cap P$. 
Then, by hypothesis, we have that there exists a $r_0$-path $\alpha=\{\alpha_k\}_{k=0}^m$ in $\ga(G_v,X_v)$ supported on $P\cup\{1_G\}$ and joining $1_G$ with $\delta_i^{-1}\delta_j$.
Define $\beta$ as the concatenation of the admissible $r_0$-paths $\{\delta_k\}_{k=0}^i$ followed by $ \{\delta_i\alpha_k\}_{k=0}^m$, and finally followed by $\{\delta_k\}_{k=j}^n$. 
It is straightforward to check that $\beta$ is an admissible path with strictly smaller defect. It is also clear that the same argument works for the case when $\delta_j^{-1}\delta_i\in G_e\cap P$.

Starting with $\overline{\gamma}$ and repeating this procedure finitely many times, we obtain an admissible path with zero defect, thus finishing the proof. 
\end{proof}

We are now ready for our main Theorem.

\begin{thm}\label{thm:induction} 
Suppose that $G\curvearrowright T$ is a co-compact, minimal, general type  action of a finitely generated left-orderable group $G$ on a tree $T$. Suppose further that all vertex stabilizers are finitely generated.

Consider $v\in VT$ and $H\in\cP=\{G_e: e\in \text{link}_T(v)\}$, and assume that $G_v$ is Hucha with respect to  $H$ and all the groups of $\cP$ are Prieto. Then $G$ is Hucha with respect to the family of its cyclic subgroups, and of the subgroups acting elliptically on $T$. 
\end{thm}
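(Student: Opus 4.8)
The plan is to deduce the statement from Lemma \ref{lem:disconecting}: it suffices to show that for every positive cone $P$ of $G$ and every $r>0$ there are a geodesic segment $[v_1,v_2]$ of $T$ and a set $S\subseteq P^{-1}$ that $r$-disconnects $\cC_{v_1}$ from $\cC_{v_2}$ in $\ga(G,X)$, since then Lemma \ref{lem:disconecting} yields the Hucha property with respect to subgroups acting elliptically and to cyclic subgroups acting loxodromically, which together are precisely the cyclic subgroups and the elliptic subgroups asserted in the theorem. First I would fix, via Lemma \ref{lem:adapted}, a finite generating set $X$ adapted to a pointed subtree $(T_0,v)$, with $v$-reductions modulo a finite set $E\subseteq\mathrm{link}_T(v)$, and recall that $X_v=X\cap G_v$ generates $G_v$.

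The two hypotheses feed in as follows. Since each $G_e$ with $e\in E$ belongs to $\cP$ and is therefore Prieto, and $E$ is finite, the restricted cones $P\cap G_e$ are coarsely connected with a common constant $r_0$ inside $\ga(G_v,X_v)$; this is exactly the hypothesis of Lemma \ref{lem:projection}. On the other hand, writing $H=G_{e_0}$ for an edge $e_0=[v,w]\in\mathrm{link}_T(v)$ (which, up to replacing $e_0$ by a $G_v$-translate, we may take in $E$, this only conjugating $H$), the Hucha property of $G_v$ with respect to $H$ applied to the cone $P\cap G_v$ at width $r_0$ produces a negative swamp $S_v\subseteq P^{-1}\cap G_v$ together with cosets $a_1G_{e_0}$ and $a_2G_{e_0}$ that $S_v$ $r_0$-disconnects in $\ga(G_v,X_v)$. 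I would then set $v_1=a_1w$ and $v_2=a_2w$, the two neighbours of $v$ across the distinct edges $a_1e_0$ and $a_2e_0$.

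The core of the argument is to show that this data disconnects $\cC_{v_1}$ from $\cC_{v_2}$. Given an $r$-path $\gamma$ from $\cC_{v_1}$ to $\cC_{v_2}$, pass to a geodesic interpolation $\bar\gamma$. Because $v_1$ and $v_2$ lie in different components of $T\setminus\{v\}$, condition (2) of the adapted generators forces the orbit track $i\mapsto\bar\gamma_iv$ to pass through $v$; the indices with $\bar\gamma_iv=v$ are exactly the visits of $\bar\gamma$ to the subgroup $G_v$, and, organizing the excursions between consecutive visits by the $v$-reductions (condition (4)), this middle portion of $\bar\gamma$ enters $v$ across $a_1e_0$ and leaves across $a_2e_0$. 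Applying Lemma \ref{lem:projection} to it, either $\bar\gamma$ already meets $P^{-1}$, or one extracts a positive $r_0$-path in $\ga(G_v,X_v)$ from $a_1G_{e_0}$ to $a_2G_{e_0}$. The second alternative is impossible, since $S_v\subseteq P^{-1}$ $r_0$-disconnects these cosets and a positive path cannot meet $S_v$. Hence $\bar\gamma$ meets $P^{-1}$, and therefore $\gamma$ meets the neighbourhood $P^{-1}\ball(1,r)$.

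It remains to upgrade ``$\gamma$ meets $P^{-1}\ball(1,r)$'' into an honest negative swamp $S\subseteq P^{-1}$ of width $r$. For this I would push the obstruction deep into the negative cone, exactly as in Lemma \ref{lem: Hucha is preserved by overgroups} and Proposition \ref{prop: free product Hucha}: taking $t$ with $t^{-1}=\max_\prec\ball_X(1,r)$ and using that $P^{-1}$ is a sub-semigroup, a translated and $r$-thickened copy of $S_v$ stays inside $P^{-1}$ while becoming thick enough that no $r$-path can step over it. I expect the genuine difficulty to be precisely this two-front bookkeeping: (i) verifying, through conditions (2) and (4), that the visit sequence of $\bar\gamma$ to $G_v$ really connects the coset $a_1G_{e_0}$ to the coset $a_2G_{e_0}$, so that the Prieto-driven projection of Lemma \ref{lem:projection} encounters the obstruction provided by the Hucha swamp $S_v$; and (ii) arranging the deep-push and thickening so that the final wall simultaneously lies in $P^{-1}$ and $r$-disconnects $\cC_{v_1}$ from $\cC_{v_2}$. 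Once both are in place, Lemma \ref{lem:disconecting} closes the argument.
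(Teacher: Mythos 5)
Your overall architecture is exactly the paper's: reduce to Lemma \ref{lem:disconecting}, fix an adapted generating set via Lemma \ref{lem:adapted}, use Prieto-ness of the finitely many $G_e$, $e\in E$, to get a uniform constant feeding Lemma \ref{lem:projection}, use Hucha-ness of $G_v$ with respect to $H$ to get cosets $a_1H,a_2H$ disconnected by the negative cone inside $\ga(G_v,X_v)$, and then run the geodesic-interpolation/contradiction argument. That part is sound and matches the paper step for step. The genuine gap is in your final ``upgrade'' step, which you flag but do not resolve, and whose description as written would fail. First, the wall cannot be ``a translated and $r$-thickened copy of $S_v$'': the first alternative of Lemma \ref{lem:projection} only tells you that the path meets $P^{-1}\ball(1,r)$, and that meeting point can occur anywhere along the path, far from $G_v$ and from $S_v$ (the positive $r_0$-path through $G_v$ in the second alternative is a \emph{constructed} projection, not a portion of the original path, so hitting $S_v$ is what produces the contradiction, not a location of the crossing). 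The wall must therefore be the whole thickened negative cone. Second, and more seriously, running the argument with $P$ itself and translating afterwards does not work: to get $t\,P^{-1}\ball(1,r)\subseteq P^{-1}$ by left-invariance you would need $P^{-1}\ball(1,r)$ to be bounded above in $\prec$, and since left-orders are not right-invariant there is no such bound in general ($pb\prec c$ unwinds to controlling $p^{-1}c$ from the wrong side; e.g.\ in the Klein bottle group with the standard cone, $P^{-1}\ball(1,r)$ already contains elements exceeding $\max_\prec\ball(1,r)$).

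The paper's fix is to perform the conjugation \emph{before} invoking the hypotheses: choose $h$ with $\ball(1,r)\prec h$, so $h^{-1}\ball(1,r)\subseteq P^{-1}$, and run the entire disconnection argument for the conjugate cone $P_{\ast}=hPh^{-1}$ (Prieto gives $k_0$-connectivity of $P_\ast\cap G_e$, Hucha gives $a_1,a_2$ with $P_\ast^{-1}\cap G_v$ disconnecting $a_1H,a_2H$). The resulting wall is $P_{\ast}^{-1}\ball(1,r)$, and only then does one translate: $h^{-1}P_{\ast}^{-1}\ball(1,r)=P^{-1}h^{-1}\ball(1,r)\subseteq P^{-1}\cdot P^{-1}\subseteq P^{-1}$, using the semigroup property of $P^{-1}$; the translated wall $r$-disconnects $\cC_{h^{-1}w_1}$ from $\cC_{h^{-1}w_2}$, which is admissible since Lemma \ref{lem:disconecting} allows an arbitrary geodesic segment of $T$. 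So the missing idea is precisely this order of operations: apply the Hucha/Prieto hypotheses to the conjugated cone so that the thickening sits on the correct side, rather than trying to push an unconjugated wall into $P^{-1}$ a posteriori. (A small further remark: your reduction placing $e_0$ in $E$ up to a $G_v$-translate is unnecessary --- the Prieto hypothesis is only needed for the edges in $E$, while the edge realizing $H$ can be any $e\in\mathrm{link}_T(v)$ with $G_e=H$; since all of $\cP$ is assumed Prieto this is harmless, but it adds a conjugation of $H$ you then have to track through the Hucha hypothesis.)
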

\begin{proof} 
By Lemma \ref{lem:adapted} we can consider a generating set $X$ and a compact pointed tree $(T_0,v)$ so that $X$ is adapted to $(T_0,v)$. 
Consider $P$ a positive cone of $G$ with corresponding left-order $\prec$, and take $r>0$. We will find a geodesic segment $[w_1,w_2]_T$ in $T$ and a subset $S\subset P^{-1}$ so that $S$ $r$-disconnects $\cC_{w_1}$ from $\cC_{w_2}$ in $\Gamma(G,X)$. 
Since $P$ and $r$ are arbitrary, we will be able to conclude the proof by applying Lemma \ref{lem:disconecting}.   

Consider $h\in G$ so that $g\prec h$ for every $g\in\ball(1,2r)$. So we have $h^{-1}\ball(1,2r)\subset P^{-1}$, and we can also deduce that $h^{-1}(hP^{-1}h^{-1})\ball(1,2r)\subseteq P^{-1}$. 
Denote $\prec_{\ast}$ the left-order on $G$ with positive cone $P_{\ast}:=hPh^{-1}$, and re-write the last statement to get $$h^{-1}P_{\ast}^{-1}\ball(1,2r)\subseteq P^{-1}$$

By definition of an adapted generating set, there exists a finite subset $E\subseteq \text{link}_T(v)$ so that $X$ has $v$-reduction modulo $E$. 
Since $E$ is finite and the subgroups in $\cP$ are Prieto, there exists $k_0>0$ so that $P_{\ast}\cap G_e$ is $k_0$-connected in $\ga(G_v,X_v)$ for every $e\in E$. 
On the other hand, since $G_v$ is Hucha with respect to $H$, we can find $a_1,a_2\in G_v$ so that $P_{\ast}^{-1}\cap G_v$ $k_0$-disconnects $a_1H$ from $a_2H$ inside $\Gamma(G_v,X_v)$. 

Take an edge $e=[v,w]_T$ with $G_e=H$ and denote $a_1e=[v,w_1]_T$ and $a_2e=[v,w_2]_T$. 
Consider the geodesic segment $[w_1,w_2]_T$. 

\vspace{.1cm}
{\bf Claim:} $P_{\ast}^{-1}\ball(1,2r)$ $r$-disconnects $\cC_{w_1}$ from $\cC_{w_2}$. 
\vspace{.1cm}

To see this, take $h_i\in \cC_{w_i}$ for $i=1,2$ and an $r$-path ${\gamma}=\{\gamma_i\}_{i=0}^m$ in $\ga(G,X)$ joining them. 
Consider $\overline{\gamma}=\{\overline{\gamma}_{i}\}_{i=0}^n$ a geodesic interpolation of $\gamma$.
Let $\sigma\colon \{0,\ldots, m\}\to \{0,\ldots,n\}$ such that $\gamma_i=\overline{\gamma}_{\sigma(i)}$.

Since $X$ is adapted to $(T_0,v)$ we have that $\overline{\gamma}_{i+1}T_0\cap\overline{\gamma}_{i}T_0\neq\emptyset$ for $i=0,\ldots,n-1$. 
Then the definition of the $\cC_{w_i}$ implies that there exist $0<i<j<n$ so that $a_1e\in\overline{\gamma}_iT_0$ and $a_2e\in\overline{\gamma}_jT_0$. 
Note that this implies that $\overline{\gamma}_i\in a_1H$ and $\overline{\gamma}_j\in a_2H$.
If either $\overline{\gamma}_i$ or $\overline{\gamma}_j$ are in $P_{\ast}^{-1}\ball(1,r)$ then $\gamma$ meets $P_{\ast}^{-1}\ball(1,2r)$ and the claim follows. Assume the contrary, and 
let $p$ (resp. $q$) be the smallest (resp. greatest) integer such that $\sigma(p)\geq i$ (resp. $\sigma(q)\leq j$).
Consider the $r$-path $\beta=\{\beta_k\}_{p-1}^{q+1}$ with $\beta_{p-1}=\overline{\gamma}_i$, $\beta_{q+1}=\overline{\gamma}_j$ and $\beta_k=\gamma_k$ for $p\leq k\leq q$.
Note that $\beta$ is an $r$-path from $\overline{\gamma}_i\in a_1H$ to $\overline{\gamma}_j\in a_2H$, which are both included in $G_v$. We shall apply Lemma \ref{lem:projection} to the cone $P_{\ast}$ and the $r$-path $\beta$, noticing that taking $r_0 = k_0$ satisfies the hypotheses, as we deduced from the Prieto property of the subgroups in $\cP$. On the other hand, the choice of  $a_1$ and $a_2$ coming from the Hucha property of $G_v$ with respect to $H$, prevents the existence of a $k_0$-path in $\ga(G_v,X_v)$ supported on $P_{\ast}\cup\{1\}$ joining $\overline{\gamma}_i\in a_1H$ with $\overline{\gamma}_j\in a_2H$. Thus point 1 in Lemma \ref{lem:projection} holds, giving that $\beta$ meets $P_{\ast}^{-1}\ball(1,r)$. Since we are assuming the endpoints $\beta_{p-1}=\overline{\gamma}_i$ and $\beta_{q+1}=\overline{\gamma}_j$ are not in $P_{\ast}^{-1}\ball(1,r)$, we deduce that $\gamma$ meets $P_{\ast}^{-1}\ball(1,r)\subset P_{\ast}^{-1}\ball(1,2r)$. This concludes the claim. $\Diamond$


Finally, from the claim we have that $h^{-1}P_{\ast}^{-1}\ball(1,2r)$ $r$-disconnects the sets $\cC_{h^{-1}w_1}$ and $\cC_{h^{-1}w_2}$ associated to the segment $[h^{-1}w_1,h^{-1}w_2]_T$. 
Since $h^{-1}P_{\ast}^{-1}\ball(1,r)\subseteq P^{-1}$ the theorem follows from Lemma \ref{lem:disconecting}. 
\end{proof}

\begin{rem}
Note that $F_2\times \Z$ is the amalgamated free product of two groups isomorphic to $\Z^2$ along an infinite cyclic subgroup. We know that this group is not Hucha nor Prieto (Remark \ref{rem: F_2x Z}), thus the fact that at least one vertex stabilizer is Hucha is essential in Theorem \ref{thm:induction}.
\end{rem}

\begin{cor}
Let $A$ be Hucha with respect to $C\leqslant A$, where $C$ is a proper Prieto subgroup. 
For any group $B$ such that $G=A*_CB$ is left-orderable, $G$ is Hucha with respect to $\{A, B\}$ and the collection of the  cyclic subgroups.
\end{cor}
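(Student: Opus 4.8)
The plan is to deduce this directly from Theorem \ref{thm:induction} by letting $G=A*_CB$ act on its Bass--Serre tree $T$. First I would recall the structure of $T$ from Example \ref{rem: example}: it has vertex set $G/A\sqcup G/B$ and edge set $G/C$, with $gC$ joining $gA$ to $gB$, and $G$ acts by left translation. I would then verify the standing hypotheses of Theorem \ref{thm:induction}. The action is co-finite (the quotient is a single edge), minimal, and of general type; the last two use that $C$ is a proper subgroup of both $A$ and $B$, so here I would record that we must also assume $C\neq B$ (otherwise the splitting is degenerate). The vertex stabilisers are the conjugates of $A$ and of $B$: since $A$ is Hucha it is finitely generated, and taking $B$ finitely generated as well makes $G$ and all vertex stabilisers finitely generated.

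Next I would fix the basepoint $v=1\cdot A$, so that $G_v=A$, and identify the relevant family of edge stabilisers. The edges incident to $v$ are exactly the cosets $aC$ with $a\in A$, and the stabiliser of $aC$ is $aCa^{-1}$; hence $\mathcal{P}=\{G_e:e\in\text{link}_T(v)\}=\{aCa^{-1}:a\in A\}$. Taking $a=1$ shows $C\in\mathcal{P}$, and by hypothesis $G_v=A$ is Hucha with respect to $C$, which supplies the subgroup $H=C$ demanded by the theorem. I would then observe that every group in $\mathcal{P}$ is isomorphic to $C$ via an inner automorphism of $G$, and since the Prieto property is an isomorphism invariant of a finitely generated group (it is independent of the generating set by Lemma \ref{lem: Prieto is geometric}), each $aCa^{-1}$ is Prieto. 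This checks all the hypotheses of Theorem \ref{thm:induction}.

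Applying the theorem then yields that $G$ is Hucha with respect to the family of all cyclic subgroups of $G$ together with all subgroups acting elliptically on $T$. The statement about cyclic subgroups is already part of the claim. For the remaining part I would note that $A$ fixes the vertex $1\cdot A$ and $B$ fixes the vertex $1\cdot B$, so both $A$ and $B$ act elliptically on $T$ and are therefore covered by the conclusion, giving that $G$ is Hucha with respect to $\{A,B\}$ as well.

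I do not anticipate a serious obstacle, since all the analytic weight of the argument is carried by Theorem \ref{thm:induction}; this corollary is essentially a translation of the algebraic data $(A,C,B)$ into the language of the tree action. The only points requiring genuine care are the bookkeeping around properness of $C$ in $B$ (needed for minimality and the general-type condition) and the observation that the Prieto and Hucha properties are invariants of the abstract group, so that the conjugate copies $aCa^{-1}$ of $C$ inherit them.
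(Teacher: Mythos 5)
Your proof is correct and is essentially the paper's own argument: the paper disposes of this corollary by saying it is analogous to Corollary \ref{cor:productolibre}, i.e.\ one applies the combination theorem (here Theorem \ref{thm:induction}) to the Bass--Serre action of $G=A*_CB$, identifying the stabilizers in $\mathrm{link}_T(v)$ as the conjugates $aCa^{-1}$ (Prieto, since Prieto is an abstract group invariant) with $C\in\cP$ the subgroup for which $G_v=A$ is Hucha, and noting that $A$ and $B$ act elliptically. Your extra bookkeeping (finite generation of $B$, and $C\neq B$ to guarantee minimality and general type) merely makes explicit nondegeneracy assumptions the paper leaves implicit.
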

\begin{proof} The proof is analogous to that of Corollary \ref{cor:productolibre}.
\end{proof}
By a result of Bludov and Glass \cite{BludovGlass}, the amalgamation of left-orderable groups over a cyclic subgroup is again left-orderable.

\begin{cor}
Let $A$ be Hucha with respect to $C\leqslant A$ with $C$ cyclic and let $B$ be a left-orderable group.
Then $A*_CB$  is Hucha with respect to $A$ and $B$ and the collection of infinite cyclic subgroups.
\end{cor}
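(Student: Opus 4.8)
The plan is to reduce this statement to the immediately preceding corollary, whose hypotheses are that $C$ is a proper Prieto subgroup of $A$ and that $G = A *_C B$ is left-orderable. The whole content here is that, when $C$ is cyclic, both of these conditions are automatic; so the proof amounts to verifying them and then invoking the previous corollary.

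First I would check the Prieto condition. Since $A$ is left-orderable it is torsion-free, so the cyclic subgroup $C$ is either trivial or isomorphic to $\Z$; in either case $C$ is finitely generated torsion-free abelian, and hence Prieto by Proposition \ref{prop: abelian groups are Prieto}. I would also record that $C$ is necessarily proper in $A$: if $C = A$ then $A$ itself would be (infinite) cyclic and therefore Prieto rather than Hucha, contradicting the assumption that $A$ is Hucha with respect to $C$.

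Second I would obtain the left-orderability of the amalgam from the theorem of Bludov and Glass \cite{BludovGlass}, which states that an amalgamated free product of two left-orderable groups over a cyclic subgroup is again left-orderable; applied to $A$, $B$ and $C$, this gives that $G = A *_C B$ is left-orderable. With both hypotheses now in place, the preceding corollary yields that $G$ is Hucha with respect to $\{A, B\}$ and the collection of all cyclic subgroups, and restricting to the infinite cyclic ones gives exactly the asserted conclusion.

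I do not expect a genuine obstacle, since the statement is a formal specialization of the previous corollary: the only points requiring attention are that cyclic groups are Prieto and that cyclic amalgams of left-orderable groups stay left-orderable, both of which are already available. The one bookkeeping subtlety is to ensure the underlying action of $G$ on its Bass-Serre tree is of general type (as required by Theorem \ref{thm:induction} through the previous corollary), which holds provided $C$ is proper in both $A$ and $B$; properness in $A$ was just established, and properness in $B$ is part of the standing convention for the amalgam $A *_C B$.
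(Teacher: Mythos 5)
Your proposal is correct and follows exactly the route the paper intends: the corollary is stated without separate proof precisely because it is the preceding corollary combined with the Bludov--Glass theorem, once one notes that a cyclic subgroup of a (torsion-free) left-orderable group is trivial or $\Z$ and hence Prieto, and that $C$ must be proper in $A$ since a Hucha group cannot be cyclic. Your extra bookkeeping (properness of $C$ in $A$ via Proposition \ref{prop: Hucha are not coarsely connected}, properness in $B$ by the standing convention on amalgams) is exactly the verification the paper leaves implicit.
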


A one-relator group $G$ is called {\it cyclically pinched} if $G=F_1*_{\langle  a =b \rangle} F_2$ with $F_1,F_2$ finitely generated free groups, $a\in F_1-\{1\}$ and $b\in F_2-\{1\}$.
\begin{cor}
Cyclically pinched  $(\geq 3)$-generated one-relator groups are Hucha with respect to the collection of infinite cyclic subgroups.
\end{cor}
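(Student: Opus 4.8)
The plan is to realize the cyclically pinched one-relator group $G=F_1*_{\langle a=b\rangle}F_2$ as an amalgam to which the preceding corollary applies directly, so that being Hucha with respect to infinite cyclic subgroups follows at once. The single input we must supply is that one of the free factors is Hucha with respect to the amalgamated cyclic edge group, and this is precisely where the hypothesis of being $(\geq 3)$-generated is used.

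First I would unpack the $(\geq 3)$-generated hypothesis as $\mathrm{rank}(F_1)+\mathrm{rank}(F_2)\geq 3$, so that at least one factor---say $F_1$, after relabelling---is non-abelian free. This is the crucial case distinction: if instead both factors had rank one, then $G$ would be the $2$-generated torus knot group $\langle x,y\mid x^{p}=y^{q}\rangle$, which by Corollary \ref{cor: Braids are Prieto} is Prieto and hence certainly not Hucha. Thus the rank hypothesis is sharp, excluding exactly this degenerate possibility.

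Next I would verify that the amalgamated subgroup $C=\langle a\rangle\leqslant F_1$ is a finitely generated subgroup of infinite index in $F_1$. Finite generation is immediate since $C$ is cyclic, and $C$ is infinite since $a\neq 1$ in the torsion-free group $F_1$. For the index, a finite-index subgroup of a free group of rank $r$ is itself free of rank $1+k(r-1)$ by Nielsen--Schreier, which for $r\geq 2$ can never equal $1$; hence a cyclic subgroup of the non-abelian free group $F_1$ must have infinite index. With this established, Proposition \ref{prop:F2hucha} applies and gives that $F_1$ is Hucha with respect to $C$, where $C$ is cyclic.

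Finally I would invoke the previous corollary with $A=F_1$, $B=F_2$ and amalgamated subgroup $C$: since $F_1$ is Hucha with respect to the cyclic subgroup $C$ and $F_2$ is free, hence left-orderable, we conclude that $G=F_1*_{C}F_2$ is Hucha with respect to $F_1$, $F_2$, and the collection of infinite cyclic subgroups; in particular $G$ is Hucha with respect to all infinite cyclic subgroups, as claimed. The only genuinely substantive step is the index computation that allows Proposition \ref{prop:F2hucha} to apply, which is exactly where the rank hypothesis bites; everything else is a direct appeal to the machinery already developed.
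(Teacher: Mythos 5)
Your proposal is correct and follows essentially the same route the paper intends: the corollary is stated there without proof precisely because it is the immediate specialization of the preceding corollary (taking $A=F_1$, $B=F_2$, $C=\langle a\rangle$ cyclic, with left-orderability supplied by Bludov--Glass), once one observes that the $(\geq 3)$-generated hypothesis forces some factor to be non-abelian free, in which the cyclic subgroup $\langle a\rangle$ has infinite index so that Proposition \ref{prop:F2hucha} applies. Your Nielsen--Schreier index computation and the remark on the excluded torus-knot case fill in exactly the details the paper leaves implicit.
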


\section{A family of Hucha groups}
\label{sec: limit}

In this section we prove Theorem \ref{thm: main}.
Let $\mathfrak{H}_0$ be the family of non-abelian finitely generated free groups.
For $i>0$,  let $\mathfrak{H}_i$ be the closure under free products of the family consisting of finitely generated non-abelian subgroups, of groups of the form $G*_C A$ where $G\in \mathfrak{H}_{i-1}$, $C$ is  a cyclic centralizer subgroup of $G$ and $A$ is finitely generated abelian. 

Thus, the family $\mathfrak{H}$ of Definition \ref{def: familia H}, is equal to $\cup_{i\geq 0}\mathfrak{H}_i$.
%

\begin{prop}\label{prop:howie}
If $G\in \mathfrak{H}$ then $G$ is locally indicable (and hence left-orderable).
\end{prop}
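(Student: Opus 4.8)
The plan is to prove by induction on $i$ that every group in $\mathfrak{H}_i$ is locally indicable; since $\mathfrak{H}=\bigcup_{i\ge 0}\mathfrak{H}_i$ this yields the statement, and local indicability implies left-orderability by the Burns--Hale theorem \cite{burnshale}. The whole argument rests on three closure properties of the class of locally indicable groups: it is closed under (i) passing to finitely generated subgroups, (ii) free products, and (iii) amalgamation $G*_C A$ of a locally indicable group $G$ with a finitely generated torsion-free abelian group $A$ along an infinite cyclic subgroup $C$. Granting these, the inductive step follows directly from the definition of $\mathfrak{H}_i$: if $G\in\mathfrak{H}_{i-1}$ is locally indicable, then $G*_C A$ is locally indicable by (iii), every finitely generated non-abelian subgroup of it is locally indicable by (i), and free products of such subgroups are locally indicable by (ii), so all of $\mathfrak{H}_i$ consists of locally indicable groups.

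Properties (i) and (ii), together with the base case, are standard. A subgroup of a locally indicable group is locally indicable because every finitely generated subgroup of it is a finitely generated subgroup of the ambient group. Free groups form the base case $\mathfrak{H}_0$: every finitely generated subgroup is again free, and a nontrivial free group surjects onto $\Z$. For (ii), by the Kurosh subgroup theorem any nontrivial finitely generated subgroup $K$ of a free product of locally indicable groups splits as $K=F*H_1*\cdots *H_k$ with $F$ free and each $H_j$ a finitely generated subgroup of a conjugate of one of the factors; composing the retraction of $K$ onto a nontrivial free factor with that factor's epimorphism onto $\Z$ produces the desired surjection $K\to\Z$.

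The crux is property (iii), and this is where we invoke the theorem of Howie \cite{Howie}: an amalgamated free product of locally indicable groups over an infinite cyclic subgroup is again locally indicable. The substantive work is to verify that the hypotheses of this theorem hold for $G*_C A$. The factor $A$, being finitely generated torsion-free abelian, is free abelian and hence locally indicable, while $G$ is locally indicable by the inductive hypothesis. The edge group $C=C_G(g)$ is the centralizer of a nontrivial element, so it contains $g$ and is in particular nontrivial; being cyclic and a subgroup of the torsion-free group $G$, it is infinite cyclic. (Were $C$ trivial, the amalgam would simply be a free product, already handled by (ii).) With these checks in place Howie's theorem applies and $G*_C A$ is locally indicable, which closes the induction.

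I expect the only delicate point to be exactly this verification of Howie's cyclic-edge hypothesis: it is precisely the \emph{cyclic centralizer} requirement imposed in Definition \ref{def: familia H} that guarantees $C\cong\Z$, and hence that the amalgamation step stays inside the class to which Howie's result applies. Once that is granted, everything else is a routine assembly of the standard closure properties.
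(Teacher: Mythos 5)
Your proof is correct and follows essentially the same route as the paper: induction through the hierarchy $\mathfrak{H}_i$, the standard closure of local indicability under subgroups and free products, and Howie's theorem \cite{Howie} for the amalgamation step. The only cosmetic difference is that you quote Howie in the corollary form ``amalgams of locally indicable groups over an infinite cyclic subgroup are locally indicable,'' whereas the paper invokes the one-relator-product form $G*_C A = (G*A)/\langle\langle ca^{-1}\rangle\rangle$ and explicitly checks the hypothesis that $ca^{-1}$ is loxodromic and not a proper power --- a verification your citation absorbs (and which, contrary to your closing remark, is the genuinely delicate point rather than the observation that $C\cong\Z$).
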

\begin{proof}
Since every finitely generated subgroup of a free group is free, the groups in $\mathfrak{H}_0$ are locally indicable.
It is easy to see that local indicability is preserved under free products and taking subgroups.
So it remains to show that if $G\in \mathfrak{H}_n$, $C=\langle c\rangle$ is the centralizer of some $c\in G$ and $A$ is a finitely generated  abelian group, then $G*_CA$ is locally indicable.

Let $H,K$ be locally indicable groups and $r\in H*K$ an element acting loxodromically on the Bass-Serre tree of the free product.
Howie \cite{Howie} showed that a one-relator quotient $H*K/\langle \langle r \rangle \rangle$ is again locally indicable if and only if $r\in H*K$ is not a proper power (see \cite[Appendix]{ADL} for an alternative proof).
Since $G*_C A=G*A/\langle\langle ca^{-1}\rangle \rangle$ for some $a\in A-\{1\}$, and the element $ca^{-1}$ acts loxodromically (on the Bass-Serre tree of the free product) and it is not a proper power, we deduce that $G*_CA$ is locally indicable. 
\end{proof}

A group $G$ is called {\it CSA}  if all its maximal abelian
subgroups are malnormal, that means that if $A$ is a maximal abelian subgroup of $G$ and $g\in G$, then $gHg^{-1}\cap H\neq \{1\}$ implies $g\in H$.

In \cite{MiRe} define the class CSA$^*$ as the CSA groups with no elements of order 2. It is proved that CSA$^*$ is closed under free products \cite[Theorem 4]{MiRe}, and the following construction: if $L$ is CSA$^*$, $C$ is a centralizer of an element of $L$ and $A$ is torsion-free finitely generated abelian, then $L*_CA$ is CSA$^*$ \cite[Theorem 5]{MiRe}.
Since CSA clearly passes to subgroups and free groups are CSA we have.
\begin{lem}\label{lem: CSA}
If $G\in \mathfrak{H}$ then $G$ is CSA.
\end{lem}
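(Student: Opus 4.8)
The plan is to prove the sharper statement that every group in $\mathfrak{H}$ lies in the class CSA$^*$, by induction on the stratification $\mathfrak{H}=\bigcup_{i\geq 0}\mathfrak{H}_i$ recalled at the start of this section. This immediately yields the lemma: CSA$^*$ is by definition CSA together with the absence of elements of order $2$, and by Proposition \ref{prop:howie} every group in $\mathfrak{H}$ is left-orderable, hence torsion-free, so for these groups being CSA$^*$ is the same as being CSA. The whole argument is thus a matter of tracking the class CSA$^*$ through the three operations used to build $\mathfrak{H}_i$ from $\mathfrak{H}_{i-1}$.

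For the base case I would note that $\mathfrak{H}_0$ consists of finitely generated non-abelian free groups, which are CSA and torsion-free, hence CSA$^*$. For the inductive step I would assume that every group in $\mathfrak{H}_{i-1}$ is CSA$^*$ and take $G\in\mathfrak{H}_{i-1}$, a cyclic centralizer subgroup $C=\gen{c}$ of $G$, and a finitely generated torsion-free abelian group $A$ (torsion-free as in Definition \ref{def: familia H}, which is forced anyway by left-orderability). Since $C$ is precisely the centralizer of the element $c$ in the CSA$^*$ group $G$, \cite[Theorem 5]{MiRe} applies and gives that $G*_CA$ is CSA$^*$. Passing to finitely generated non-abelian subgroups preserves CSA$^*$, because CSA passes to subgroups and subgroups of torsion-free groups are torsion-free. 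Finally, closing under free products preserves CSA$^*$ by \cite[Theorem 4]{MiRe}. Hence every group in $\mathfrak{H}_i$ is CSA$^*$, closing the induction.

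There is no genuinely hard step here: the substance is carried entirely by Theorems 4 and 5 of \cite{MiRe}. The only points that require a little care are (i) checking that the hypotheses of those theorems match the constructions defining $\mathfrak{H}$ — in particular that a \emph{cyclic centralizer subgroup} of $G$ is literally the centralizer of an element, as required by \cite[Theorem 5]{MiRe}, and that $A$ is taken torsion-free so that the conclusion is CSA$^*$ rather than merely CSA — and (ii) the reduction from CSA$^*$ to CSA at the very end, which relies on torsion-freeness from Proposition \ref{prop:howie}. Strengthening the target from CSA to CSA$^*$ is exactly what makes the induction go through, since CSA on its own is not obviously stable under the amalgamation $G*_CA$.
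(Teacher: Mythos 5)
Your proof is correct and follows essentially the same route as the paper: the paper likewise deduces the lemma by running the CSA$^*$ class through the hierarchy defining $\mathfrak{H}$, citing \cite[Theorems 4 and 5]{MiRe} for free products and extensions of centralizers, together with the facts that free groups are CSA and that CSA passes to subgroups. Your only addition is to make the implicit induction and the torsion-freeness bookkeeping (via Proposition \ref{prop:howie}) explicit, which matches the paper's intent exactly.
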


Recall that  a simplicial $G$-tree is $k$-acylindrical if the fixed point set of any $g\in G$ has diameter at most $k$ (i.e. any set of diameter $>k$ has trivial stabilizer).
\begin{lem}\label{lem: acylindrical}
If $T$ is the Bass-Serre tree of $G=A*_CB$ with $C$ malnormal in $A$ then the action is 2-acylindrical. 
\end{lem}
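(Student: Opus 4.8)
The plan is to reduce $k$-acylindricity to a statement about pointwise stabilizers of geodesic segments, and then to exploit the bipartite structure of the Bass--Serre tree together with the malnormality of $C$ in $A$. Recall from Example \ref{rem: example} that $T$ has vertex set $G/A\sqcup G/B$ and edge set $G/C$, with $gC$ joining $gA$ and $gB$, and that $G$ acts on the left without inversions. Hence $\Stab(gA)=gAg^{-1}$, $\Stab(gB)=gBg^{-1}$, $\Stab(gC)=gCg^{-1}$, and the pointwise stabilizer of a geodesic segment is the intersection of the stabilizers of its edges. Since the fixed-point set of any tree automorphism is a subtree (it is convex, as an automorphism fixing two points fixes the geodesic between them pointwise), to prove $2$-acylindricity it suffices to show that the pointwise stabilizer of every geodesic segment of length $3$ is trivial; indeed, any nontrivial $g$ whose fixed-point set had diameter $\geq 3$ would then fix such a segment, a contradiction.

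The key computation I would isolate is the following: if $e,e'$ are two \emph{distinct} edges both incident to a single $A$-type vertex, then $\Stab(e)\cap\Stab(e')=\{1\}$. By equivariance I may translate so that the common vertex is $1\cdot A$; the edges incident to $1A$ are then exactly the cosets $aC$ with $a\in A$, and two distinct ones $a_1C\neq a_2C$ satisfy $a:=a_1^{-1}a_2\in A\setminus C$. Now
\[
a_1Ca_1^{-1}\cap a_2Ca_2^{-1}=a_1\bigl(C\cap aCa^{-1}\bigr)a_1^{-1},
\]
and since $a\notin C$, malnormality of $C$ in $A$ gives $C\cap aCa^{-1}=\{1\}$, so the intersection is trivial.

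To assemble the proof, let $v_0-v_1-v_2-v_3$ be any geodesic segment of length $3$. Because $T$ is bipartite with parts $G/A$ and $G/B$, the two interior vertices $v_1,v_2$ have opposite types, so exactly one of them, say $v_j$, is an $A$-type vertex; the two edges of the segment incident to $v_j$ are distinct and both incident to an $A$-vertex. By the key computation their stabilizers meet trivially, so the pointwise stabilizer of the whole segment is trivial, which is what we wanted. The main (indeed the only nonroutine) point is the malnormality computation; the rest is bookkeeping with the alternating structure. It is worth noting that no hypothesis on $C$ inside $B$ is used, which is precisely why the sharp bound here is $2$ and not smaller: a length-$2$ segment centered at a $B$-vertex need not have trivial stabilizer.
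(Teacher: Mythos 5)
Your proof is correct and follows essentially the same route as the paper's: both arguments reduce $2$-acylindricity to the observation that, by malnormality of $C$ in $A$, two distinct edges incident to an $A$-type vertex have stabilizers ($A$-conjugates of $C$) meeting trivially, and then note that any fixed set of diameter $\geq 3$ contains such a configuration. Your version merely makes the coset bookkeeping and the convexity of fixed-point sets explicit, which the paper leaves implicit.
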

\begin{proof}
Indeed, the stabilizer of an edge $e \in ET$ is a conjugate of $C$ (we can assume by $G$-equivariance that it is $C$). 
Let $v$ be the vertex adjacent to $e$ with stabilizer $L$.
All edges adjacent to $v$ have stabilizer $lCl^{-1}$ with $l\in L\setminus C$.
By malnormality of $C$ in $L$, $C\cap lCl^{-1}$ is trivial.
Thus $C$ does not fix any other edge adjacent to $v$ different from $e$.
Now any subset of diameter $\geq 3$, up to $G$-equivariance, contains a path of length $\geq 2$ that has $v$ as an internal vertex and hence it has trivial stabilizer.
\end{proof}

We need the following fact.

\begin{lem}\cite[Lemma 2]{Cohen}\label{lem:fg-vertex groups}
Let $G$ be a finitely generated group acting co-finitely on a tree $T$. 
If the stabilizers of edges adjacent to $v\in VT$ are finitely generated, then the stabilizer of $v$ is finitely generated.
\end{lem}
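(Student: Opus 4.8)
The plan is to pass to the quotient graph of groups and exploit the finite generation of $G$ to control $G_v$ \emph{modulo} its incident edge groups, which are finitely generated by hypothesis.

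First I would record that co-finiteness makes $Y := G\backslash T$ a finite graph and, by Bass--Serre theory, identifies $G$ with the fundamental group $\pi_1(\mathcal{G},Y)$ of the finite graph of groups whose vertex and edge groups are the corresponding stabilizers. Since $Y$ is finite, the image $\bar v$ of $v$ has finite degree, so the edges of $T$ lying in $\text{link}_T(v)$ fall into finitely many $G_v$-orbits $G_v e_1,\dots,G_v e_m$, one per edge-end of $Y$ at $\bar v$. By hypothesis each incident edge stabilizer $G_{e_i}$ is finitely generated.

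Next, using that $G$ is finitely generated, I would extract from a finite generating set a finite subset $F\subseteq\bigcup_{w\in VY}G_w$ of vertex-group elements together with the finitely many stable letters $t_e$ ($e\in EY$) that generate $G$; writing $F=\bigsqcup_{w}F_w$ with $F_w\subseteq G_w$, set
$$G'_w:=\langle F_w\cup\{G_e: e \text{ incident to } w\}\rangle\le G_w,$$
which is finitely generated because $F_w$ is finite and the incident edge groups are finitely generated. I would then form the graph of groups $(\mathcal{G}',Y)$ with the same underlying graph, the same edge groups and inclusions, but the smaller vertex groups $G'_w$, and let $G'=\pi_1(\mathcal{G}',Y)$. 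The inclusions induce a homomorphism $\phi\colon G'\to G$, which is surjective since its image contains $F$ and all the $t_e$.

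The heart of the argument, and the step I expect to be the main obstacle, is to upgrade surjectivity of $\phi$ to the equality $G'_w=G_w$ for every $w$; in particular $G_v=G'_v$ would then be finitely generated, as desired. I would establish this by a reduced-word (Britton's lemma / Bass--Serre normal form) argument: take $g\in G_w$ and express it as a word in $F$ and the $t_e$. This word traces a closed walk in $T$ based at $w$, and since $T$ is a tree the walk must backtrack; each backtrack across an edge $e'=[w',w'']$ produces a "pinch" whose turn-around factor lies in the edge group $G_{e'}$. Because $G_{e'}\le G'_{w'}$ and $G_{e'}\le G'_{w''}$ by construction, every such pinch can be absorbed into the local vertex groups and the walk shortened; inducting on the length of the walk (equivalently, on the distance traveled in $T$) rewrites $g$ as a product of elements of the single local group $G'_w$, giving $g\in G'_w$. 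The delicate point is to run this induction cleanly so that each excursion of the walk is captured by an incident edge stabilizer --- which is exactly where finite generation of the edge groups at $v$ is used --- and this is precisely the content of the normal form theorem for fundamental groups of graphs of groups.
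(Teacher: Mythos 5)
Your argument is essentially correct, but note first that the paper does not prove this lemma at all: it is imported verbatim from Cohen \cite[Lemma 2]{Cohen}, whose original argument (phrased for HNN groups in pre--Bass--Serre normal-form language) runs on the same mechanism you use, so your write-up amounts to a correct, self-contained graph-of-groups version of the cited proof rather than a divergence from the paper. Two small inaccuracies, neither fatal. First, your blanket claim that every $G'_w$ is finitely generated is both unjustified and unnecessary: the hypothesis only makes the edge stabilizers \emph{at $v$} finitely generated, so for $w\neq \bar v$ the group $G'_w=\langle F_w\cup\{G_e: e \text{ incident to } w\}\rangle$ may well be infinitely generated; fortunately your argument never uses finite generation of $G'_w$ except at $w=\bar v$, which is exactly where the hypothesis applies. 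Second, your closing remark locates the use of finite generation of edge groups in the pinch-absorption step; in fact that step uses only the \emph{containments} $G_{e'}\le G'_{w'}$ and $G_{e'}\le G'_{w''}$, which hold by construction with no finiteness whatsoever, and finite generation of the edge stabilizers at $v$ enters solely in the last line, to conclude that $G'_{\bar v}=\langle F_{\bar v}, G_{e_1},\dots, G_{e_m}\rangle$ is finitely generated. Finally, the ``main obstacle'' you flag --- upgrading surjectivity of $\phi$ to $G_w\cap \mathrm{im}\,\phi = G'_w$ --- is indeed exactly the normal form theorem and nothing more: since the edge groups of $(\mathcal{G}',Y)$ coincide with those of $(\mathcal{G},Y)$ and are contained in the $G'_w$, any $\mathcal{G}'$-reduced word is automatically $\mathcal{G}$-reduced, so $\phi$ is injective and a reduced word of positive length cannot lie in a vertex stabilizer; combined with surjectivity this gives $G_v=G'_v$, and invoking the normal form theorem at this point (rather than re-running the backtracking induction by hand) legitimately closes the argument.
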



We can prove now our main theorem.

\begin{proof}[Proof of Theorem \ref{thm: main}]
We will prove by induction that groups in $\cup_{i=0}^n\mathfrak{H}_i$ are Hucha with respect to the family of cyclic centralizers.  The base case of induction ($n=0$) is a consequence of Proposition \ref{prop:F2hucha}.
Thus assume that the induction hypothesis hold for $n>0$. Have to prove the case $n+1$.

Let $L\in \mathfrak{H}_{n}$, $C\leqslant L$ a subgroup of a cyclic centralizer and $A$ a finitely generated abelian group.
Without loss of generality we assume that $C$ is a proper subgroup of $A$ (since if not $L*_CA=L$).
Let $T$ be the associated Bass-Serre tree to $L*_CA$.
By Lemma \ref{lem: CSA}, $C$ is malnormal in $L$, which implies (Lemma \ref{lem: acylindrical}) that the action on the $T$ is $2$-acylindrical.
In particular, if $H$ is a subgroup $L*_CA$ with elements acting loxodromically on $T$ and such that the minimal $H$-tree is homeomorphic to $\mathbb{R}$, then it must be virtually cyclic (see for example \cite[Theorem 1.1.]{Osin}), and since $L*_C A$ is torsion-free (Proposition \ref{prop:howie}), $H$ is must be cyclic.

Let $G\leqslant L*_C A$ be a non-abelian finitely generated subgroup. 
We have to show that $G$ is Hucha with respect to its cyclic centralizers.

If $G$ is, up to conjugation, a subgroup of $L$. 
Then, by induction hypothesis $G$ is Hucha with respect to its cyclic centralizers. 
So we restrict to the case where $G$ is not contained, up to conjugation, in $L$.
Note that, since $G$ is non-abelian, it is not contained, up conjugation, in $A$.

Thus the action of $G$ on the Bass-Serre tree of $L*_CA$ has no-global fixed point.
By \cite[Proposition I.4.11]{DicksDunwoody} $G$ contains elements acting loxodromically on $T$.
Consider $T'\subseteq T$ the minimal $G$-invariant subtree of $T$.
This is the subtree consisting on the union of the axis of the elements of $G$ acting loxodromically. 
Since $G$ is finitely generated, $G\backslash T'$ is finite
(see \cite[Proposition I.4.13]{DicksDunwoody}).
By the previous discussion, if $T'$ has two ends,  $G$ must be cyclic, contradicting that $G$ is non-abelian.
In fact, since the action of $G$ of $T'$ is acylindicral, any element acting loxodromically on $T$ has cyclic stabilizer.
Finally, since $L*_C A$ does not fix any end of $T$, $G$ does not fix any end of $T'$.
Thus, the action of $G$ on $T'$ is of general type and co-compact, with edge stabilizers that are trivial or cyclic, and vertex stabilizers that are isomorphic to subgroups of $A$ or subgroups of $L$.
Moreover, by Lemma \ref{lem:fg-vertex groups} the vertex groups are finitely generated.


If $G$ acts on $T'$ with trivial vertex stabilizers, then $G$ is free, and thus $G\in \mathfrak{H}_0$ and there is nothing to prove.

Suppose that $G$-stabilizer of all vertices of $T'$ are non Hucha. 
Thus, all $G$-stabilizers of all vertices of $T'$ must be abelian, since by induction, all non Hucha finitely generated subgroups of $L$ are the abelian ones.
If some edge has trivial stabilizer, by Proposition \ref{prop: free product Hucha}, $G$ is Hucha with respect to the subgroups acting loxodromically on $T'$ and the subgroups of the vertex stabilizers. In particular it is Hucha with respect to the cyclic centralizers.
Thus we can assume that all edge stabilizer are non-trivial and cyclic.
Let $e$ be an edge of $T'$ adjacent to $u$, and suppose that the stabilizer of $u$ is a conjugate of a subgroup of $L$.
Since $G_u$ is abelian and $G_e\leqslant G_u$ is a centralizer of some element (restricted to $G_u$) we get that $G_u=G_e$.
We can $G$-equivariantly collapse the edge $e$ and we will still have an action on a tree with the previous properties, and we could repeat this argument and continue $G$-equivariantly collapsing edges, obtaining that $G$ is abelian, a contradiction.

Thus the only case remaining is when there is some $v\in T'$ with $G_v$ an Hucha group.
Thus $G_v$ must be  (up to conjugation) a finitely generated subgroup of $L$ and thus, by induction, it is Hucha with respect to all its cyclic centralizers. 
Since the stabilizers of edges adjacent to $v$ are either trivial or Prieto, we can use Theorem \ref{thm:induction} to conclude that $G$ is Hucha with respect to all its cyclic centralizers.
\end{proof}

\paragraph{Application to limit groups.}
There are several equivalent definitions in the literature for {\it limit groups.} 
See for example \cite{ChampGuir} for a nice survey. The quickest definition is that $G$ is a limit group if it is a finitely generated fully residually free group, meaning that for any finite set $S\subseteq G$ there is a homomorphism $\phi \colon G\to F$ where $F$ is a free group and $\phi$ is injective restricted to $S$. 
However, to show that limit groups belong to $\mathfrak{H}$ we need a different characterization.

Let $H$ be a group and $Z$ the centralizer of some non-trivial element of $H$. 
A {\it free extension of $H$ by the centralizer $Z$} is a group of the form $H*_Z(Z\times A)$ for some finitely generated free abelian group $A$. 

Let $\textrm{ICE}_0$ denote the class of finitely generated free groups. For $i>0$, let $\textrm{ICE}_i$ denote the groups that are free extensions by cyclic centralizers of groups in $\textrm{ICE}_{i-1}$. Finally let $\textrm{ICE}$ denote the union of $\textrm{ICE}_i$ $i\in \mathbb{Z}_\geq 0$.

An important and very useful result for us is the following.
\begin{thm}\cite[Thm 4.]{KM98}.\label{thm:KM} The following are equivalent
\begin{enumerate}
\item $G$ is a limit group 
\item $G$ is a finitely generated subgroup of a group in  $\textrm{ICE}$
\end{enumerate}
\end{thm}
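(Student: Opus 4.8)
The plan is to prove the two implications separately, since they draw on completely different tools: $(2)\Rightarrow(1)$ is elementary and can be done by hand, whereas $(1)\Rightarrow(2)$ is the substantive direction and rests on the theory of equations over free groups and the structure of fully residually free groups.

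For $(2)\Rightarrow(1)$, I would first note that full residual freeness passes to arbitrary subgroups for free, because the defining condition only quantifies over \emph{finite} subsets and a finite subset of a subgroup is a finite subset of the ambient group. Hence it suffices to show that every group in $\textrm{ICE}$ is fully residually free, which I would prove by induction on the level $i$, the base case $\textrm{ICE}_0$ being free groups. For the inductive step one must show that if $H$ is fully residually free and $Z=C_H(c)$ is a cyclic centralizer, then the free extension of the centralizer $H*_Z(Z\times A)$, with $A$ finitely generated free abelian, remains fully residually free. The standard route is a \emph{big powers} argument: given a finite set of elements to separate, put them in amalgam normal form, choose a retraction $H*_Z(Z\times A)\to H$ sending the new abelian generators to sufficiently high powers of $c$, and verify that for large enough exponents this retraction is injective on the chosen finite set, thereby reducing the problem to the inductive hypothesis for $H$.

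For $(1)\Rightarrow(2)$, the goal is to embed an arbitrary finitely generated fully residually free group $G$ into a finite tower of extensions of centralizers. Here I would follow the Kharlampovich--Myasnikov strategy of passing through an ambient exponential group. First I would embed $G$ into the Lyndon free $\Z[t]$-exponential group $F^{\Z[t]}$, using that a limit group satisfies the same universal sentences as $F$; realizing this embedding is exactly where one needs to control solution sets of systems of equations over a free group, via Makanin--Razborov diagrams and the algebraic geometry over free groups of \cite{KM98}, presenting $G$ as the coordinate group of an irreducible variety and invoking the canonical embedding theorem. Second, I would show that any finitely generated subgroup of $F^{\Z[t]}$ already sits inside some $\textrm{ICE}_i$: finitely many elements of $F^{\Z[t]}$ involve only finitely many levels of the exponential construction, and these can be organized into a finite iterated extension of centralizers. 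Composing the two steps places $G$ in $\textrm{ICE}$.

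The main obstacle lies entirely in $(1)\Rightarrow(2)$, and specifically in its first step: producing the embedding into $F^{\Z[t]}$ requires a genuine understanding of the variety of solutions of equations over a free group, and this is the deep external input that cannot be avoided. By contrast the second step, and all of $(2)\Rightarrow(1)$, amount to comparatively routine bookkeeping once the extension-of-centralizer lemma and the big-powers technique are in place.
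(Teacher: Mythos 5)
There is nothing in the paper to compare your argument against: Theorem \ref{thm:KM} is quoted from Kharlampovich--Myasnikov \cite{KM98} as an external input and the paper offers no proof of it. Judged on its own, your outline is essentially the standard proof from the literature and is correctly structured. Two remarks on where your sketch compresses real work. In $(2)\Rightarrow(1)$, the big-powers argument needs more than you state: one must know that a fully residually free group $H$ itself satisfies the big powers condition (obtained by choosing a homomorphism $H\to F$ injective on the finitely many non-trivial commutators $[g_i,g_{i+1}]$ and invoking Baumslag's big powers theorem in the free group $F$), and the normal-form analysis in $H*_Z(Z\times A)$ uses that $H$ is commutative transitive/CSA, so that the cyclic centralizer $Z$ is malnormal and syllable reductions behave; both facts are available but must be supplied. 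In $(1)\Rightarrow(2)$, your second step is fine modulo one small point -- Lyndon's $F^{\Z[t]}$ is built by extending centralizers by the \emph{infinite-rank} abelian group $\Z[t]$, so to land in $\textrm{ICE}$ (finitely generated $A$) you must observe that each level is the direct union of the extensions by finitely generated subgroups of $\Z[t]$, whence a finitely generated subgroup uses only finitely many exponents at finitely many levels -- but your first step is not something one ``follows'': the embedding of an arbitrary finitely generated fully residually free group into $F^{\Z[t]}$, via coordinate groups of irreducible varieties and triangular quasi-quadratic systems, \emph{is} the main theorem of \cite{KM98}. So your proposal is best read as a correct reduction of the equivalence to that deep theorem plus routine bookkeeping, which you candidly acknowledge, and which is precisely how the present paper also treats the statement: as a black box.
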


Clearly, in view of Theorem \ref{thm:KM} we have that non-abelian limit groups are in $\mathfrak{H}$.

\paragraph{Application to free $\Q$-groups.}
Recall that $H$ is a $\Q$-group if for every $h\in H$ and all $n\in \N$ there is a unique $g\in H$ such that $g^n=h$. 
This allows to define an action of $\Q$ on $H$, and we denote the image of the action of $\alpha\in \Q$ on $h\in H$ by $h^\alpha$. 
If $\alpha=p/q$ then $h^\alpha$ is $g^p$ where $g$ is the unique element such that $g^q=h$.
It is easy to check that indeed this in an action.

A $\Q$-group $G^\Q$ together with an homomorphism $\phi \colon G \to G^\Q$ is called the {\it tensor $\Q$-completion of the group $G$} if it satisfy the following universal property: 
for any $\Q$-group $H$ and a homomorphism $f \colon G\to H$ 
there exists a unique $\Q$-homomorphism $\psi \colon G^\Q \to H$ (a homomorphism that commutes with the $\Q$-action) such that $f=\psi \circ \phi$.

$\Q$-groups were introduce by G. Baumslag in \cite{GBaumslag} where he showed that $\Q$-tensor completions exists and are unique. 
A free $\mathbb{Q}$-groups  is the tensor $\Q$-completion of a free group $F$.
Crucially, Baumslag also showed that $F^\Q$ can be obtained from a free group $F$ by interatively adding roots to $F$ i.e.
there is chain
$$F=F_0<F_1<F_2< \dots $$
such that $F_{n+1}$ is obtained from $F_{n}$ by adding a some $q$-th root of an element generating its own centralizer i.e. if $c\in F_n$ and $\langle c \rangle =\{g\in F_n \colon gc=cg\}$ then $F_{n+1}=F_n*_{c=t^q}\langle t \mid \;\rangle$.
There are some subtleties about how to construct this ascending chain (see \cite[Section 8]{MiRe}) however, clearly any finitely generated subgroup of a $\Q$-free group is a subgroup of a finite iterative addition of roots to a free group $F$ and hence it lies in $\mathfrak{H}$ \footnote{Since finitely generated subgroups of free $\Q$-groups are in $\mathfrak{H}$, Proposition \ref{prop:howie} then implies that  free $\Q$-groups are left-orderable. In fact, G Baumslag ask  whether these groups are residually torsion free nilpotent a fact recently proved by A. Jaikin \cite{Jaikin}. In particular, free $\Q$-groups are bi-orderable, that is they admit left-orders which are also right-invariant. }.

\begin{ex}
An example of a group in $\mathfrak{H}$ is $G=\langle a,b,c\mid a^2b^2c^2\rangle$, the fundamental group of the connected sum of 3-projective planes. 
It follows from a theorem of Lyndon \cite{Lyndon} that  the equation $a^2b^2c^2=1$ in a free group implies that $a,b,c$ commute. 
Hence $G$ is not a limit group. 
On the other hand, $G$ is obtained from the free group $\langle a,b\mid \;\rangle$, by adding a square root to $a^2b^2$ and hence it is a finitely generated subgroup of a free $\Q$-group.
\end{ex}

\section{Regular sets are coarsely connected}
\label{sec: regular}

Let $X$ be a set. 
Recall that $X^*$ denotes the {\it free monoid generated by $X$} and consists on the set of finite words on $X$ together with concatenation. If $X\subseteq G$ generates the group $G$, then there is a natural monoid epimorphism $\mathrm{ev}\colon X^*\to G$, called {\it evaluation map}, that is induced by viewing each element of $x\in X$ as element of $G$.

\begin{defi}\label{def:regular language}
A {\it finite state automaton} is a 5-tuple $(\cS,\cA,s_0,X,\tau)$, where $\cS$ is a set whose elements are  called {\it states}, $\cA$ is a subset of $\cS$ of whose states are called {\it accepting states}, a distinguished element  $s_0\in \cS$ called {\it initial state}, a finite set $X$ called the {\it input alphabet} and a function $\tau\colon \cS\times X\to \cS$ called {\it the transition function}. We extend $\tau$ to a function $\tau\colon \cS\times X^*\to \cS$ recursively, by setting $\tau(s,wx)=\tau(\tau(s,w),x)$ where $w\in X^*$, $x\in X$ and $s\in \cS$. 

A {\it language $\cL$ over $X$} is a subset of $X^*$, and $\cL$ is {\it regular} if there is a finite state automaton $(\cS,\cA,s_0,X,\tau)$
such that $$\cL=\{w\in X^* \mid \tau(s_0,w)\in \cA\}.$$
\end{defi}

\begin{prop}\label{prop:r-connected}
Let $G$ be a group generated by a finite set $X$. 
If $\mathcal{L}\subseteq X^*$ is a regular language, then $\mathrm{ev}(\mathcal{L})$ is an $r$-connected subset of $\ga(G,X)$ for some $r$.
\end{prop}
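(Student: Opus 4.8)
The plan is to exploit the finiteness of the state set $\cS$ to produce a single ``hub'' vertex of $\mathrm{ev}(\cL)$ to which every point of $\mathrm{ev}(\cL)$ can be joined by an $r$-path with $r$ uniform. Since $\mathrm{ev}(\cL)$ is $r$-connected as soon as every pair of its points is joined by an $r$-sequence supported in $\mathrm{ev}(\cL)$, routing everything through a common hub suffices: one concatenates two such paths, reversing one of them. I may assume $\cL\neq\emptyset$, since otherwise $\mathrm{ev}(\cL)=\emptyset$ and there is nothing to prove.

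First I would record the reading dynamics of a finite state automaton $(\cS,\cA,s_0,X,\tau)$ accepting $\cL$: a word $w=x_1\cdots x_n\in\cL$ determines a sequence of states $s_0,s_1,\ldots,s_n$ with $s_i=\tau(s_{i-1},x_i)$ and $s_n\in\cA$, and it traces the $1$-path $1_G,\mathrm{ev}(x_1),\ldots,\mathrm{ev}(w)$ in $\ga(G,X)$. The difficulty is that these intermediate vertices need not lie in $\mathrm{ev}(\cL)$, so this $1$-path is useless as it stands; the core idea is to correct each intermediate vertex into $\mathrm{ev}(\cL)$ by appending a completion word of bounded length.

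Concretely, for each co-accessible state $s$ (one from which some accepting state is reachable) I would fix once and for all a word $v_s\in X^*$ of minimal length with $\tau(s,v_s)\in\cA$, taking $v_s$ empty whenever $s\in\cA$. As $\cS$ is finite, $M:=\max_s|v_s|$ is finite. Every state $s_i$ visited by an accepted word is co-accessible, since reading the suffix $x_{i+1}\cdots x_n$ carries $s_i$ to $s_n\in\cA$; hence $v_{s_i}$ always exists. Then the word $x_1\cdots x_i\,v_{s_i}$ is accepted, so the corrected vertex $p_i:=\mathrm{ev}(x_1\cdots x_i)\,\mathrm{ev}(v_{s_i})$ lies in $\mathrm{ev}(\cL)$.

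Finally I would verify that consecutive corrected vertices are close. By left-invariance of $\mathrm{d}_X$ one computes $p_i^{-1}p_{i+1}=\mathrm{ev}(v_{s_i})^{-1}\mathrm{ev}(x_{i+1})\mathrm{ev}(v_{s_{i+1}})$, whence $\mathrm{d}_X(p_i,p_{i+1})\le M+1+M=2M+1$. Thus $p_0,p_1,\ldots,p_n$ is an $r$-path with $r=2M+1$ supported in $\mathrm{ev}(\cL)$, running from the hub $p_0=\mathrm{ev}(v_{s_0})$, which is independent of $w$, to $p_n=\mathrm{ev}(w)$ (using $v_{s_n}$ empty since $s_n\in\cA$). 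As $w$ ranges over $\cL$, every point of $\mathrm{ev}(\cL)$ is joined to the common hub $p_0$ by an $r$-path inside $\mathrm{ev}(\cL)$, giving $r$-connectivity. The only mildly delicate point is the bookkeeping that every visited state is co-accessible, so that $v_{s_i}$ is defined; the rest is the left-invariant distance estimate.
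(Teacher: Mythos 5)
Your proposal is correct and follows essentially the same route as the paper's proof: both correct each prefix $\mathrm{ev}(x_1\cdots x_i)$ of an accepted word by appending a bounded-length completion word reaching an accepting state, obtaining a path inside $\mathrm{ev}(\cL)$ with consecutive vertices at distance at most $2M+1$ (the paper bounds $M\le|\cS|$ via minimality, which your argument does not even need). Your per-state choice of completion $v_s$ and the explicit hub $p_0$ are only cosmetic variations on the paper's per-prefix completions $w_i'$ and its path from a base point to $\mathrm{ev}(w)$.
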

\begin{proof}
Assume that $\cL$ is a regular language over $X$ and $(\cS,\cA,s_0,X,\tau)$ is a finite state automaton accepting $\cL$. 
We can view the automaton as a directed graph with vertices $\cS$ and edges $\cS\times X$ where $(s,x)$ is an edge from $s$ to $\tau(s,x)$. 
We call this graph, the {\it directed automaton graph}.
Let $r=|\cS|$.
Let $w\equiv x_1x_2\dots x_n\in \cL$. 
We will show that we can go from $1$ to $\mathrm{ev}(w)$ with an $(2r+1)$-path $\{v_i\}_{i=0}^n$ where $v_i\in \mathrm{ev}(\cL)$.
Indeed, let $w_i$ be the prefix of length $i$ of $w$, i.e. $w_i=x_1x_2\dots x_i$. 
It is enough to find $v_i\in \mathrm{ev}(\cL)$ such that $\dist(\mathrm{ev}(w_i), v_i)\leq r$.
Note that $w_i$ is a prefix of some word in $\cL$, so we take a word of minimal length $w'_i$ with the property that $w_i w_i'\in \cL$.
Now, $w_i'$ gives a path in directed automaton graph from $\tau(s_0, w_i)$ to some vertex in $\cA$.
By minimality, this path can not repeat a vertex and thus $\ell(w_i')\leq r$.
Note that $w_i'$ will be the label of some path in the Caley graph from $\mathrm{ev}(w_i)$ to $\mathrm{ev}(w_iw_i')$ of length at most $r$. 
Thus we can take $v_i$ to be  $\mathrm{ev}(w_iw_i')$.
\end{proof}

\begin{defi}
A positive cone $P$ of a finitely generated group $G$ is {\it regular} if there is a finite generating set $X$ of $G$ and a regular language $\cL$ over $X$ such that $P=\mathrm{ev}(\cL)$.
\end{defi}

\begin{rem}
Finitely generated (as subsemigroups) positive cones are regular. 
\end{rem}

\begin{cor}\label{cor: hucha not regular}
If $G$ is Hucha, then $G$ has no regular positive cone. 
\end{cor}
\begin{proof}
Let $P$ be a positive cone for $G$ and $X$ a finite generating set of $G$.
Suppose that there is a regular language $\mathcal{L}\subseteq X^*$ such that the evaluation of $\mathcal{L}$ in $G$ is $P$. 
By Proposition \ref{prop:r-connected}, there is some $r>0$ such that $P$ is an $r$-connected subset of $\ga(G,X)$.  
This contradicts Proposition \ref{prop: Hucha are not coarsely connected}.
\end{proof}

\begin{rem}\label{rem: combing}
Let $G$ be a group and $X$ a generating set.
If $\cL$ is a regular language over $X$ such that $\mathrm{ev}(\cL)$ is a positive cone $P$, 
then the set of paths in $\Gamma(G,X)$ starting at $1_G$ and with label $w\in \cL$ is a combing of $P$ and by Proposition \ref{prop:r-connected} this combing is supported in the $r$-neighbourhood of $P$ for some $r$.
\end{rem}

Suppose that $X$ is a finite generating set of a group $G$. 
A language $\cL\subseteq X^*$ is {\it quasi-geodesic} if there are $\lambda\geq 1$ and $c\geq 0$ such that each word $w\in \cL$ labels a $(\lambda,c)$-quasi-geodesic path.

In \cite[Question 8.7]{calegari} Calegari sketched an argument for showing that on the fundamental group of an hyperbolic 3-manifold, no positive cone language is both regular and  quasigeodesic. Recently H. L. Su \cite{Su} showed, using Calegari's ideas, that in fact no positive in the (much larger) class of acylindrically hyperbolic group can be described by a regular and quasigeodesic language. With our tools we can easily recover Su's theorem in restriction to the class of $\delta$-hyperbolic groups. 

\begin{cor}
Non-elementary hyperbolic groups do not admit regular quasi-geodesic positive cones.
\end{cor}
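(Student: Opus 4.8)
The plan is to derive the statement as an immediate contradiction with Theorem \ref{thm combing}, using Remark \ref{rem: combing} as the bridge. The guiding observation is that a regular quasi-geodesic positive cone is exactly what one needs to manufacture a quasi-geodesic combing supported on a neighbourhood of the cone, and Theorem \ref{thm combing} says no such object exists for a non-elementary hyperbolic group.

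Concretely, I would argue by contradiction. Suppose $G$ is a non-elementary hyperbolic group admitting a regular quasi-geodesic positive cone $P$. By the definition of regular positive cone there is a finite generating set $X$ and a regular language $\cL\subseteq X^*$ with $\mathrm{ev}(\cL)=P$, and since $P$ is quasi-geodesic we may fix constants $\lambda\geq 1$, $c\geq 0$ so that every word of $\cL$ labels a $(\lambda,c)$-quasi-geodesic path in $\ga(G,X)$.

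Next I would invoke Remark \ref{rem: combing}: the collection $\cP$ of paths in $\ga(G,X)$ starting at $1_G$ and carrying a label $w\in\cL$ forms a combing of $P$, and by Proposition \ref{prop:r-connected} it is supported on the $r$-neighbourhood of $P$ for some $r$. Since $\mathrm{ev}(\cL)=P$, every $g\in P$ is the endpoint of at least one such path and every path of $\cP$ terminates at a vertex of $P$; by the choice of $\lambda$ and $c$ each such path is a $(\lambda,c)$-quasi-geodesic. Hence $\cP$ is a $(\lambda,c)$-quasi-geodesic combing of $P$ supported on the $r$-neighbourhood of $P$, directly contradicting Theorem \ref{thm combing}. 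Therefore no regular quasi-geodesic positive cone exists.

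I do not expect a genuine obstacle here: all the substance resides in Theorem \ref{thm combing} (whose proof rests on Theorem \ref{thm: main boundary} together with stability of quasi-geodesics) and in the bounded-backtracking estimate of Proposition \ref{prop:r-connected}. The only point meriting a moment's care is verifying that the combing $\cP$ inherits both features at once—quasi-geodesicity coming from the language and support in a neighbourhood coming from regularity—but these are independent inputs and combine with no interaction, so the reduction is clean.
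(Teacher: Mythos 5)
Your proposal is correct and follows exactly the paper's own argument: invoke Remark \ref{rem: combing} (resting on Proposition \ref{prop:r-connected}) to turn the regular quasi-geodesic language into a quasi-geodesic combing of $P$ supported on an $r$-neighbourhood of $P$, then contradict Theorem \ref{thm combing}. The extra detail you supply (fixing $\lambda$, $c$ from the language and checking the combing conditions) is just an unpacking of the same two-step reduction.
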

\begin{proof}
By Remark \ref{rem: combing}, if $P$ is a regular quasi-geodesic positive cone, then it admits a quasi-geodesic combing supported on an $r$-neighbourhood of $P$ for some $r$. This contradicts Theorem \ref{thm combing}
\end{proof}


\begin{small}


\vspace{1.3cm}

\end{small}


\textit{Juan Alonso}

Fac. Ciencias, Universidad de la Republica Uruguay

juan@cmat.edu.uy

\bigskip

\textit{Yago Antol\'{i}n}

Fac. Matem\'{a}ticas, Universidad Complutense de Madrid \& ICMAT

yago.anpi@gmail.com

\bigskip

\textit{Joaquin Brum}

Fac. Ingenieria, Universidad de la Republica Uruguay

joaquinbrum@gmail.com

\bigskip

\textit{Crist\'obal Rivas}

Dpto. de Matem\'aticas y C.C., Universidad de Santiago de Chile

cristobal.rivas@usach.cl

\end{document}